%% file: decomposition.tex
\documentclass[reqno,a4paper]{amsart}
\usepackage{mathrsfs,amsmath,amssymb,graphicx,amscd}
\usepackage[unicode]{hyperref}
\usepackage[english]{babel}
\usepackage{color}
\usepackage{enumitem} 
\usepackage{float}
\usepackage{multicol} 
\usepackage{subcaption}
\usepackage[normalem]{ulem} 

\usepackage{tikz}
\usepackage[percent]{overpic} 
\usetikzlibrary{calc}
\usetikzlibrary{matrix,arrows,decorations,chains,positioning}

\makeatletter
\@namedef{subjclassname@2020}{\textup{2020} Mathematics Subject Classification}
\makeatother

\theoremstyle{plain}
\newtheorem*{theorem*}{Theorem} 

\newtheorem{theorem}{Theorem}[section]
\newtheorem{lemma}[theorem]{Lemma}
\newtheorem{corollary}[theorem]{Corollary}

\theoremstyle{definition}
\newtheorem{definition}{Definition}[section]
\newtheorem{remark}[theorem]{Remark}

\newcommand{\cout}[1]   {}
\newcommand{\rnbhd}[1]{N(#1)}
\newcommand{\openrnbhd}[1]{\mathring{N}(#1)}
\newcommand{\Compl}[1]{E(#1)}

\numberwithin{equation}{section}

\title{3-decompositions of genus two handlebody-knots}
 
\author{Makoto Ozawa, Yi-Sheng Wang}
\address{Department of Natural Sciences, Faculty of Arts and Sciences, Komazawa University, 1-23-1 Komazawa, Setagaya-ku, Tokyo, 154-8525, Japan}
\email{w3c@komazawa-u.ac.jp}
\address{National Sun Yat-sen University, Kaohsiung 804, Taiwan}
\email{yisheng@math.nsysu.edu.tw}

\date{\today}

\begin{document}
 
\subjclass[2020]{Primary 57K12, 57K32; Secondary 57M15.}
\keywords{handlebody-knots, hyperbolicity, 
essential surfaces, 3-decomposition}
\thanks{
Y.-S. Wang gratefully acknowledges the support from NSTC (grant no. 112-2115-M-110 -001 -MY3), Taiwan.}

\begin{abstract}
We investigate the class of $3$-decomposable genus two handlebody-knots and provide a complete classification of essential annuli in their exteriors. We introduce the notion of $\tau$- and $\rho$-tangles and good rectangles and annuli. By classifying $\tau$- and $\rho$-tangles whose exteriors admit a good rectangle or annulus, we categorize atoroidal $3$-decomposable genus two handlebody-knots into distinct classes, based on the number of essential annuli. As an application, the hyperbolicity of all genus two handlebody-knots with up to six crossings are determined, and numerous hyperbolic handlehody-knots with seven crossings identified. Furthermore, obstructions for a handlebody-knot to be $3$-decomposable are constructed with explicit examples provided.
\end{abstract}

\maketitle


\section{Introduction}
\label{sec:intro}

\input{intro}

\section{Preliminaries}
\label{sec:tritangle}
\input{tritangle}

\section{Rectangles in $3$-tangle exteriors}
\label{sec:rectangles}
\input{rectangles}

\section{Classification of good rectangles.}
\label{sec:classification_rec}
\input{classification.rec}

\section{Classification of good annuli.}
\label{sec:classification_ann}
\input{classification.ann}
 
\section{Annuli in a $3$-decomposable handlebody-knot exterior}
\label{sec:applications}
\input{applications}


\section{Hyperbolicity of $6_8$}
\label{sec:sixeight}
\input{sixeight}



\end{document}

%% file: intro.tex
Decomposition is a common technique in knot theory to break down complicated knots. The classical result by Schubert \cite{Sch:49} asserts a knot can be uniquely decomposed into prime knots, and thereby properties of the knot can be recovered from its prime factors. 
In general, an \emph{$n$-string decomposition} of a knot $K$ is a union of two $3$-balls $B_1,B_2$ in the oriented $3$-sphere $S^3$ such that 
$B_1\cup B_2=S^3$, and $B_1\cap B_2$ is a $2$-sphere, and $B_i\cap K$ consists of $n$ strings, $i=1,2$. Thus, Schubert's decomposition is a $1$-string decomposition, and a prime knot has no $1$-string decomposition; their $n$-string decomposition $n\geq 2$, however, is an important subject of study. The notion of $2$-string decomposition, introduced in Conway \cite{Con:70} for knot enumeration, has now been used widely in various contexts; for instance, it is used for knot primality test in Lickorish \cite{Lic:81} and knot hyperbolicity detection in Oertel \cite{Oer:84}. Properties of general $n$-string decomposition have also been investigated, for instance, Bleiler \cite{Ble:84} generalizes the notion of prime knots to \emph{knots prime on $n$-string}, and Ozawa \cite{Oza:98} examines the uniqueness and Nogueira \cite{Nog:16} the existence of an essential $n$-string decomposition. 

More generally, one can decompose a spatial graph $\Gamma$, a finite graph embedded in $S^3$, into tangled graphs. A $2$-sphere $S$ meeting $\Gamma$ transversally at $n>0$ points splits $S^3$ into two $3$-balls $B_1,B_2$ and the graph $\Gamma$ into two tangled graphs $G_i:=B_i\cap \Gamma$ in $B_i$. 
The union $(B_1,G_1)\cup_S (B_2,G_2)$ is then called an \emph{$n$-tangle decomposition} of $(S^3,\Gamma)$ (or simply, of $\Gamma$). 
Suzuki \cite{Suz:87} examines 
the $2$-tangle decomposition, where a unique decomposition theorem, analogous to Schubert's prime factorization, is obtained. 
For $3$-tangle decomposition, uniqueness results are proved for several classes of spatial graphs in 
Motohashi \cite{Mot:98}, \cite{Mot:07}.  Note that, in the case of knots, an $n$-string tangle decomposition is a $2n$-tangle decomposition.

A \emph{genus $g$ handlebody-knot} $V$ is an embedded genus $g$ handlebody in $S^3$. Handlebody-knots are closely related to spatial graphs and knots. Two handlebody-knots (resp.\ spatial graphs) are \emph{equivalent} if they are ambient isotopic. A regular neighborhood of a spatial graph uniquely determines a handlebody-knot, yet every genus $g>1$ handlebody-knot has infinitely many spines, inequivalent as spatial graphs. In addition, the study of genus one handlebody-knots is equivalent to the classical knot theory. As with spatial graphs and knots, handlebody-knots can be classified based on their decomposability.
An \emph{$n$-decomposing sphere} $S$ of $V$ is a $2$-sphere such that $S\cap V$ consists of $n$ disks in $V$, and $\overline{S-V}$ is an essential surface in the exterior $\Compl V:=\overline{S^3-V}$. We say $V$ is \emph{$n$-decomposable} if it admits an $n$-decomposing sphere; see Ishii-Kishimoto-Ozawa \cite{IshKisOza:15}. An $n$-decomposing sphere $S$ cuts $S^3$ into two $3$-balls $B_1,B_2$ and cuts $V$ into two handlebodies $V_1,V_2$ with $V_i\subset B_i$. The union $(B_1,V_1)\cup_S (B_2,V_2)$ is called an \emph{$n$-decomposition} of $(S^3,V)$ (or simply, of $V$).

The study of $1$-decomposable handlebody-knots has a long history, dating back, for instance, to Suzuki \cite{Suz:75} and Tsukui \cite{Tsu:70}, yet the investigation of $n$-decomposable handlebody-knots with $n>1$ is a relatively recent development. Ishii-Kishimoto-Ozawa \cite{IshKisOza:15} examines knotted handle decomposition, a special $2$-decomposition, where a $2$-decomposing sphere intersects $V$ in two parallel disks. In the case of genus two handlebody-knots, every $2$-decomposition is a knotted handle decomposition. Their result is used to differentiate certain handlebody-knots in the Ishii-Kishimoto-Moriuchi-Suzuki handlebody-knot table \cite{IshKisMorSuz:12} and to determine their chirality. A weaker notion of $3$-decomposition (see Section \ref{subsec:weak_decomposition}) is considered in Bellettini-Paolini-Wang \cite{BelPaoWan:25} and applied to the handlebody-knot symmetry and hyperbolicity.

The relation between handlebody-knot hyperbolicity and $n$-decomposability can be understood using Thurston's hyperbolization theorem \cite{Thu:82}, which asserts that the exterior $\Compl V$ admits a hyperbolic metric with totally geodesic boundary if and only if $\Compl V$ admits no essential disks, annuli or tori. A handlebody-knot is called \emph{reducible (resp.\ cylindrical or toroidal)} if its exterior admits an essential disk (resp.\ annulus or torus). 
If $V$ is of genus two, irreducibility implies $1$-decomposability by Tsukui \cite{Tsu:70}. On the other hand, a $1$-decomposable handlebody-knot is necessarily reducible, and a $2$-decomposable handlebody-knot necessarily cylindrical. Thus, the simplest hyperbolic handlebody-knots, in terms of decomposability, are those that are $3$-decomposable. 

\begin{figure}[h] 
\centering
\begin{overpic}[scale=.2,percent]{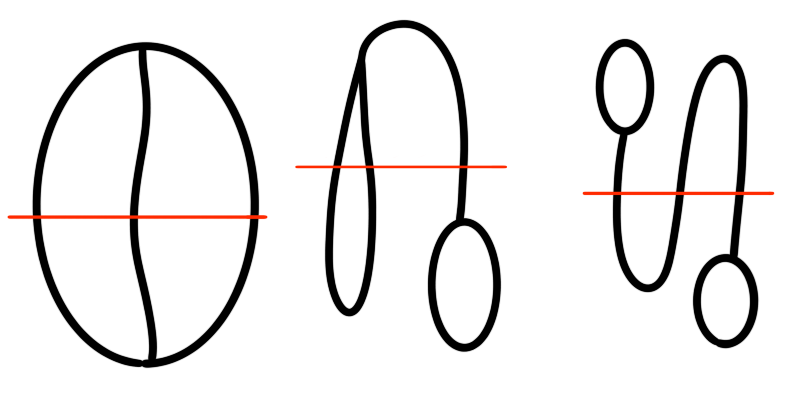}
\put(34,21){\footnotesize $S$}
\put(64,28){\footnotesize $S$}
\put(97,25){\footnotesize $S$}
\end{overpic}
\caption{Intersection $S\cap V$.}
\label{fig:intersection_disks}
\end{figure}

Consider a \emph{genus two} $3$-decomposable handlebody-knot $V$ with a $3$-decomposition $(B_1,V_1)\cup_S (B_2,V_2)$, and observe the intersection $S\cap V$ consists of three disks, 
and  either one of them is separating or they are mutually non-parallel in $V$. 
Therefore, the disks in $S\cap V$ induce a spine $\Gamma$ of $V$, which is either a theta-graph or a handcuff graph; see Fig.\ \ref{fig:intersection_disks}.
Hence, the graph $G_i:=\Gamma\cap B_i$ is either a cone of three points (Fig.\ \ref{fig:trivial_tau}) or a disjoint union of an arc and the wedge sum of a loop with an arc at its boundary (Fig.\ \ref{fig:trivial_rho}). 
The pair $(B_i,G_i)$ is called a \emph{$\tau$-tangle} if it is the former and a \emph{$\rho$-tangle} if it is the latter; 
see Figs.\ \ref{fig:tangled_tau}, \ref{fig:hopf_rho}.
The $3$-decomposition of $V$ can thus be further classified into three types:  
\begin{itemize}
    \item $\tau\tau$-decomposition if both $(B_1,G_1)$, $(B_2,G_2)$ are $\tau$-tangles,
    \item $\rho\rho$-decomposition if both $(B_1,G_1)$, $(B_2,G_2)$ are $\rho$-tangles,
    \item $\tau\rho$-decomposition if one of $(B_1,G_1)$, $(B_2,G_2)$ is a $\tau$-tangle and the other a $\rho$-tangle.
\end{itemize}

\begin{figure}[h]
\centering
\begin{subfigure}[b]{.24\linewidth}
\centering
\begin{overpic}[scale=.15,percent]{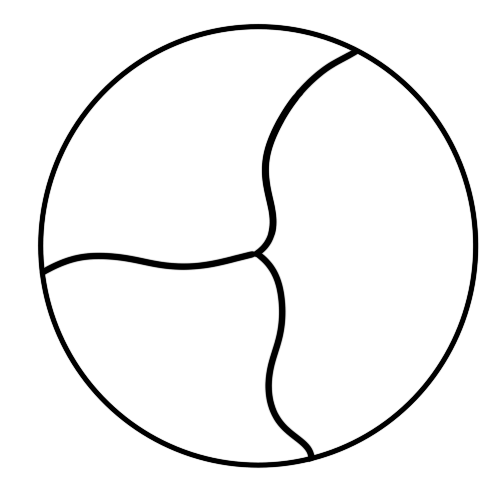}
\end{overpic}
\caption{Trivial $\tau$-tangle.}
\label{fig:trivial_tau}
\end{subfigure}
\begin{subfigure}[b]{.25\linewidth}
\centering
\begin{overpic}[scale=.14,percent]{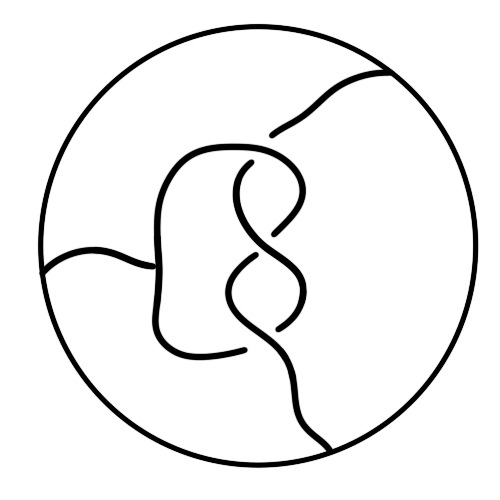}
\end{overpic}
\caption{$\tau$-tangle.}
\label{fig:tangled_tau} 
\end{subfigure} 
\begin{subfigure}[b]{.24\linewidth}
\centering
\begin{overpic}[scale=.15,percent]{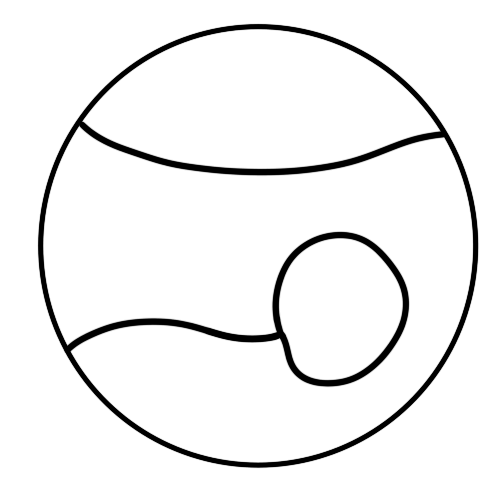}
\end{overpic}
\caption{Trivial $\rho$-tangle.}
\label{fig:trivial_rho}
\end{subfigure}
\begin{subfigure}[b]{.24\linewidth}
\centering
\begin{overpic}[scale=.14,percent]{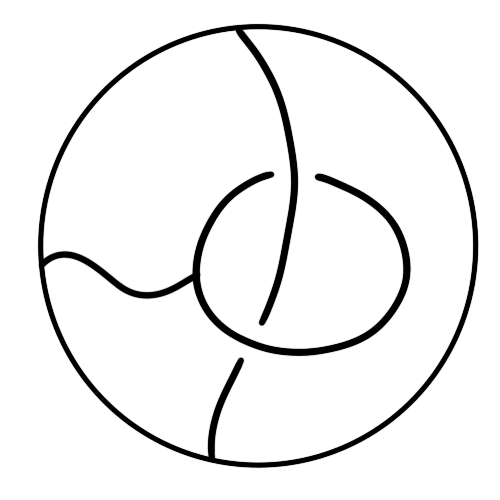}
\end{overpic}
\caption{Hopf $\rho$-tangle.}
\label{fig:hopf_rho} 
\end{subfigure} 
\caption{$\tau$- and $\rho$-tangles.}
\label{fig:tau_rho_examples}
\end{figure}

The situation with essential disks and incompressible torus in the exterior of $V$ is rather simple. Indeed, it follows from Lemma \ref{lm:genus_two_disk} that $V$ is necessarily irreducible. On the other hand, by  
Lemma \ref{lm:torus}, $V$ is atoroidal if and only if the tangle exteriors
$\Compl{G_i}:=\overline{B_i-V_i}$, $i=1,2$, 
both are atoroidal. In contrast, the configuration of essential annuli in $\Compl V$ exhibits substantial combinatorial complexity.

Suppose $V$ is atoroidal, and let $P$ be the intersection $S\cap \Compl V$ and $A$ an essential annulus in $\Compl V$ such that $\vert A\cap P\vert$ is minimized in the isotopy class of $A$. Also, denote by $F_i$, 
the frontier of a regular neighborhood of $G_i\subset B_i$.

Consider first the case where $\vert A\cap P\vert\neq 0$. In this case, the pair of pants $P$ cuts the annulus $A$ into some disks $D_1,\cdots, D_n$, each of which
is properly embedded in $\Compl {G_i}$, $i=1$ or $2$, and intersects $P$ and $F_i$ each at two essential arcs. Such a disk is called a \emph{good rectangle} in $\Compl {G_i}$; see Fig.\ \ref{fig:good_rectangle}. 
A good rectangle is an essential disk in $\Compl{G_i}$ but the other direction is false. 
Consider now the case where $\vert A\cap P\vert=0$. Then $A$ is a properly embedded annulus in $\Compl{G_i}$, $i=1$ or $2$, that admits no $\partial$-compressing disk disjoint from $P$. Such an annulus in $\Compl{G_i}$ is called a \emph{good annulus}. Every essential annulus disjoint from $P$ is a good annulus, but there are inessential good annulus; for example, Fig.\ \ref{fig:good_annulus}. See Lemma \ref{lm:annulus} for more details.

In light of the above observation, to determine essential annuli in $\Compl V$, it suffices to classify good rectangles and good annuli in $\rho$-/$\tau$-tangle exteriors. Note that, if regarding the $\tau$- or $\rho$-tangle exterior $\Compl{G_i}$ as a $3$-manifold with the boundary pattern $\{P,F_i\}$, then the good annulus and rectangle are precisely the \emph{essential annulus and square} in Johannson \cite{Joh:79}. The current terminology is adopted, to keep the exposition self-contained as the boundary pattern coming from a tangle exterior is rather special, and prior knowledge of the theory of manifolds with boundary pattern is not required. 

\begin{figure}
\centering
\begin{subfigure}[b]{.45\linewidth}
\centering
\begin{overpic}[scale=.15,percent]{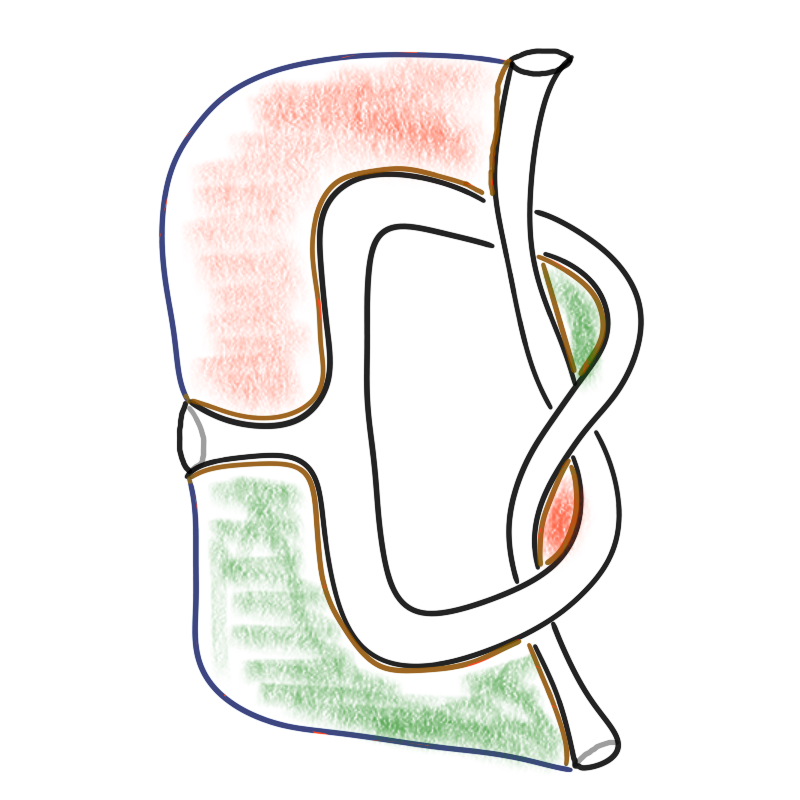}
\end{overpic}
\caption{Good rectangle.}
\label{fig:good_rectangle}
\end{subfigure}
\begin{subfigure}[b]{.45\linewidth}
\centering
\begin{overpic}[scale=.15,percent]{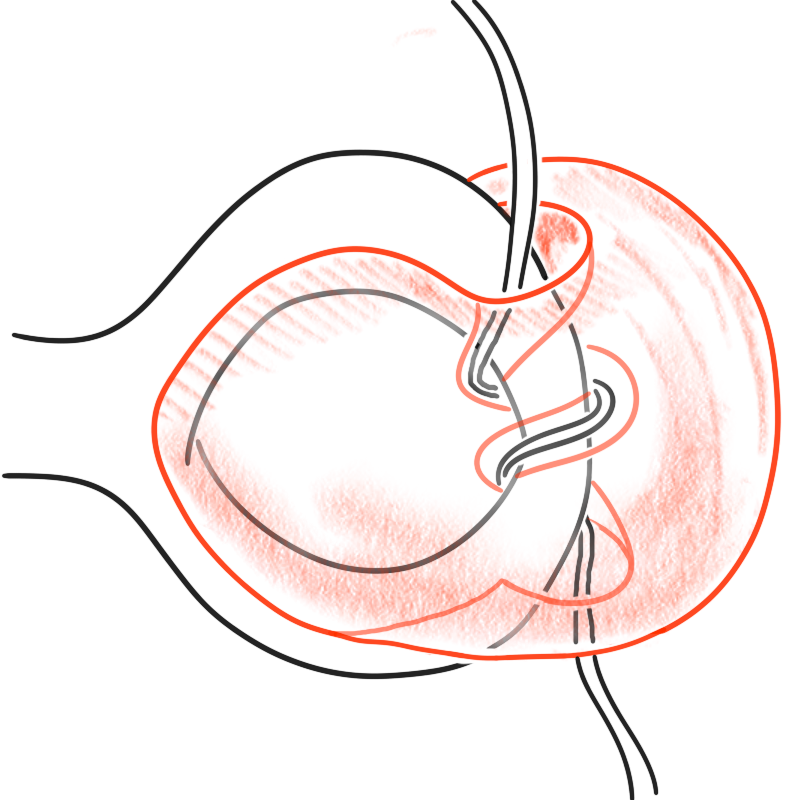}
\end{overpic}
\caption{Good, but inessential annulus.}
\label{fig:good_annulus} 
\end{subfigure} 
\caption{}
\end{figure}

\subsection*{Main results}
The first main result of the paper is a classification theorem of good rectangles and annuli in an atoroidal $\tau$- or $\rho$-tangle exterior. 
We classify, without overlap, good rectangles into five types and good annuli into three types; see Theorems \ref{teo:rectangle_tau_tangle}, \ref{teo:rectangle_rho_tangle} and \ref{teo:annuli_rho_tangle}). 

Using the classification of good rectangles and annuli, our second main result is the classification of atoroidal $3$-decomposable genus two handlebody-knots, based on the number of essential annuli in their exteriors. More specifically, we classify atoroidal 
\begin{itemize}
    \item $\tau\tau$-decomposable handlebody-knots into \emph{four disjoint classes}; 
    \item $\tau\rho$-decomposable handlebody-knots into \emph{five}; 
    \item atoroidal $\rho\rho$-decomposable handlebody-knots into \emph{three}.   
\end{itemize}
See Theorems \ref{teo:tautau_annuli_classification}, \ref{teo:taurho_annuli_classification} and \ref{teo:rhorho_annuli_classification} for the complete statements. 

In Ishii-Kishimoto-Muriuchi-Suzuki \cite{IshKisMorSuz:12}, irreducible genus two handlebody-knots are enumerated up to six crossings. There are $21$ in total, four of them are known to be toroidal, while nine of them are known to be atoroidal and cylindrical. The remaining eight are conjectured to be hyperbolic. Our second main result implies the hyperbolicity of all but one of the conjectured hyperbolic handlebody-knots; Corollary \ref{cor:hyperbolicity}. We prove the hyperbolicity of the remaining one via involutions of handlebody-knots; see Section \ref{sec:sixeight}. Note that its hyperbolicity also follows from a recent result by Chan Pamolo-Taylor \cite{Chan:25} via the fact that $6_8$ has a minimally knotted theta-spine.
Thus all hyperbolic genus two handlebody-knots up to six crossings are identified. 

Irreducible genus two handleebody-knots with seven crossings are enumerated in Bellettini-Paolini-Paolini-Wang \cite{BelPaoPaoWan:25}, and there are $69$ in total with $17$ of them known to be $3$-decomposable. Of the $17$ handlebody-knots, seven ($7_{24},7_{32}, 7_{36},7_{38},7_{39},7_{59},7_{60}$ in \cite[Table $1$]{BelPaoPaoWan:25}) are known to be atoroidal and cylindrical. Our second main results imply that the remaining ten are all hyperbolic; see Corollary \ref{cor:hyperbolicity_seven}.

The second main result also allow us to produce genus two handlebody-knots that are not $3$-decomposable.

\subsection*{Convention} 
We work in the piecewise linear category.
Given a subpolyhedron $X$ of a manifold $M$, we denote by $\overline{X}$, 
$\mathring{X}$, and $\rnbhd X$, 
the \emph{closure}, the \emph{interior}, and a \emph{regular neighborhood} of $X$ in $M$, respectively.
The \emph{exterior} $\Compl X$ of $X$ in $M$ is defined to be the complement of $\openrnbhd X$ in $M$ if $X\subset M$ is of positive codimension and to be the closure of $M-X$ otherwise. By $\vert X\vert$, we understand the number of components in $X$. Unless otherwise specified, submanifolds are assumed to be proper and intersection to be transversal.
An \emph{essential disk} in a $3$-manifold is one that does not cut off a $3$-ball from the $3$-manifold, and a compact surface with non-empty boundary that is not a disk in $M$ 
is said to be {\em essential} if it is incompressible, boundary-incompressible and not boundary-parallel.

%% file: tritangle.tex
In this section, we further develop the notion defined in the Introduction.

\subsection{Decomposition of spatial graphs and handlebody-knots}
Given a finite planar graph $G$ with no isolated vertex, we denote by $\partial G$ 
the set of vertices with degree one. 
An \emph{$n$-tangle} is a pair $(B,G)$ where $G$ is embedded in an oriented $3$-ball $B$ with $\vert \partial G\vert=n$ and
$G\cap \partial B=\partial G$; for instance, $\tau$-tangles and $\rho$-tangles are $3$-tangles; see Fig.\ \ref{fig:tau_rho_examples}. 
A pair $(B,G)$ is \emph{trivial} if $G$ is contained in a disk $D\subset B$. 
Two pairs $(B_1,G_1),(B_2,G_2)$ are \emph{equivalent} if there exists an orientation-preserving homeomorphism $f:B_1\rightarrow B_2$ such that $f(G_1)=G_2$.

\begin{definition}\label{def:tritangle_decomposition}
Given a spatial graph $\Gamma$, and two $3$-balls $B_1,B_2\subset S^3$ such that $B_1\cup B_2=S^3$ and $B_1\cap B_2$ is a $2$-sphere $S$ intersecting the interior of the edges of $\Gamma$ at $n>0$ point(s), then the union $(B_1,G_1)\cup_S (B_2,G_2)$ is called an \emph{$n$-tangle decomposition} of $\Gamma$, where $G_i:=B_i\cap \Gamma, i=1,2$. The decomposition is \emph{essential} if 
the intersection $S\cap\Compl{\Gamma}$ is essential in $\Compl{\Gamma}$.
\end{definition}

Then Definition \ref{def:tritangle_decomposition} motivates the following notion.
\begin{definition}\label{def:essential_multi-tangle}
An $n$-tangle $(B,G)$ is \emph{essential} if there is no essential disk $D$ in $\Compl G$ such that $\partial D\subset P$ or $\partial D\cap P$ is a single arc, and $(B,G)$ is said to be \emph{atoroidal} if the exterior $\Compl G$ contains no essential torus. 
\end{definition} 
Let $t_1,t_\rho$ be the two components in a $\rho$-tangle $(B,G)$ if $t_\rho$ is contained in a disk $D\subset B$ and 
$D'\subset D$ is the disk cut off by $t_\rho$, and if $(B,t_1)$ is a trivial ball-arc pair with $t_1\cap D=t_1\cap D'$ a point, then $(B,G)$ is called a \emph{Hopf $\rho$-tangle}; see Fig.\ \ref{fig:hopf_rho}. The Hopf $\rho$-tangle is non-trivial but inessential. 
As a corollary of Definition \ref{def:essential_multi-tangle}, we have the following.
\begin{lemma}\label{lm:essentiality}
An $n$-tangle decomposition
$(B_1,G_1)\cup_S (B_2,G_2)$ of a spacial graph $\Gamma$ is \emph{essential} if and only if 
both $(B_i,G_i)$, $i=1,2$, are essential.    
\end{lemma}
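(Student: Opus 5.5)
We prove the two directions separately. Throughout, $P:=S\cap\Compl\Gamma$ is a sphere with $n$ holes, and $\Compl\Gamma=\Compl{G_1}\cup_P\Compl{G_2}$. The decomposition is essential exactly when $P$ is incompressible, $\partial$-incompressible and not $\partial$-parallel in $\Compl\Gamma$. Since $P$ is planar and $n>0$, being $\partial$-parallel would force $\Compl\Gamma$ to be a product $P\times I$; this degenerate possibility is excluded by incompressibility of $P$ together with the boundary behaviour, and we handle it in the same way as the compressions below. So the core of the proof is to translate compressing and $\partial$-compressing disks of $P$ in $\Compl\Gamma$ into the disks forbidden by Definition \ref{def:essential_multi-tangle}.

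\emph{($\Leftarrow$)} Suppose both tangles are essential, and suppose, for contradiction, that $D$ is a compressing disk for $P$ in $\Compl\Gamma$. Put $D$ in general position with respect to $P$ and minimize $\vert D\cap P\vert$. Inner loops of $D\cap P$ are removed by the standard innermost-disk argument, using irreducibility of $B_i$ minus the graph neighbourhood. After this, an innermost disk $D'\subset D$ (or $D$ itself) lies in some $\Compl{G_i}$ with $\partial D'\subset P$. Because $\partial D'$ is essential in $P$, $D'$ is an essential disk of $\Compl{G_i}$, which contradicts the essentiality of $(B_i,G_i)$.

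A $\partial$-compressing disk $E$ for $P$ is treated in the same way. Here $\partial E$ consists of an essential arc in $P$ and an arc in $\partial N(\Gamma)$. After removing loops, we take an outermost arc of $E\cap P$ in $E$. The outermost subdisk $E'$ it cuts off lies in one $\Compl{G_i}$, and $\partial E'\cap P$ is either a single arc or all of $\partial E'$ lies in $P$. Using minimality, we check that $E'$ is essential in $\Compl{G_i}$. Indeed, if $E'$ cut off a ball, we could isotope $E$ to reduce $\vert E\cap P\vert$, or else $E$ would not be a genuine $\partial$-compression. This again contradicts Definition \ref{def:essential_multi-tangle}.

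\emph{($\Rightarrow$)} Conversely, suppose some $\Compl{G_i}$ contains an essential disk $D$ with $\partial D\subset P$, or with $\partial D\cap P$ a single arc.
\begin{itemize}
\item If $\partial D\subset P$ and $\partial D$ is essential in $P$, then $D$ compresses $P$ in $\Compl\Gamma$.
\item If $\partial D\subset P$ but $\partial D$ bounds a disk in $P$, then $D$ together with that disk forms a sphere bounding a ball in $B_i$. This contradicts the essentiality of $D$.
\item If $\partial D\cap P$ is an arc $\alpha$, then $D$ is a $\partial$-compressing disk for $P$, provided $\alpha$ is essential in $P$. If $\alpha$ is inessential, it cuts off a disk $\Delta$ of $P$. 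Pushing $D\cup\Delta$ slightly gives a disk in $\Compl{G_i}$ whose boundary lies in $F_i$ (that is, in $\partial N(G_i)\cap B_i$). Compressing $\partial N(G_i)$ is impossible, because the graph is nonsplit through the boundary of its neighbourhood. This either shows that $D$ was inessential, or produces a compression of $P$ via a band sum.
\end{itemize}
In every case $P$ fails to be essential, as required.

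The main obstacle is this last case, together with the $\partial$-parallel clause. It needs care about exactly which disks count as "essential" in $\Compl{G_i}$, and about showing that inessential arcs in $P$ cannot yield essential disks. I would handle this by tracking the sphere $S$ and using that the tangle exterior is irreducible.
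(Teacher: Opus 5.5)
\paragraph{The gap: the inessential-arc case of ($\Rightarrow$).} The step that fails is in ($\Rightarrow$), when $\partial D\cap P$ is an arc that is inessential in $P$. Pushing $D\cup\Delta$ off $P$ gives a disk $D'$ with $\partial D'\subset F_i$. You then assert that $F_i$ cannot compress because the graph is nonsplit. That assertion is false.

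\begin{itemize}
\item In the paper's own setting, the once-punctured torus $T_\rho\subset F$ of a $\rho$-tangle compresses whenever the loop of $t_\rho$ is unknotted and its preferred longitude bounds a disk missing $t_1$. This is exactly the disk in the proof of Lemma \ref{lm:genus_two_disk}.
\item For general spatial graphs it is worse. Take an essential Conway sphere of a knot, and on one side hang a small unknotted loop, joined by a short edge, on one of the strings. The resulting handcuff graph is connected, and $P$ is still essential in $\Compl\Gamma$: compressing and $\partial$-compressing disks of $P$ can be surgered off the disk that cuts off the solid torus around the new loop. Yet the spanning disk of the new loop, with its boundary slid across $\partial P$, is an essential disk meeting $P$ in a single inessential arc.
\end{itemize}

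So under the literal reading of Definition \ref{def:essential_multi-tangle} that you use, ($\Rightarrow$) is false in this generality, and no band-sum argument can produce a compression of $P$.

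\paragraph{How to repair it.} The lemma becomes the ``corollary of the definition'' the paper calls it if the forbidden disks are read as compressing disks ($\partial D$ essential in $P$) and $\partial$-compressing disks ($\partial D\cap P$ an essential arc of $P$). Such disks have interiors disjoint from $P$, so each lies in a single $\Compl{G_i}$. There it does not cut off a ball, because the components of $\partial P$ are essential in $\partial\Compl{G_i}$. Both directions are then immediate, and your innermost/outermost bookkeeping is unnecessary.

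If you want the broad reading, you can only get it in the genus-two $\tau$/$\rho$ setting, and you must treat $D'$ by hand:
\begin{itemize}
\item If $\partial D'$ is inessential in $F$, then $D$ itself cuts off a ball.
\item If $\partial D'$ is parallel to a component of $\partial F$ (automatic in a pair of pants or an annulus), pushing it gives a compressing disk of $P$.
\item If $\partial D'$ is non-separating in $T_\rho$, take $m\subset T_\rho$ meeting $\partial D'$ once. The frontier of a regular neighbourhood of $D'\cup m$ is a disk bounded by a curve parallel to $\partial T_\rho$, which again compresses $P$.
\end{itemize}

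\paragraph{The $\partial$-parallel clause.} Your treatment here is also incorrect. $\partial$-parallelism means one side $\Compl{G_i}$ is $P\times I$, not $\Compl\Gamma$. For $n\geq 2$, the vertical disk over an essential arc of $P$ is then a $\partial$-compressing disk, so this case reduces to $\partial$-compressibility.
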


Given a handlebody-knot $V$, 
a \emph{meridional planar surface} $P\subset \Compl V$ is a planar surface with each component of 
$\partial P$ bounding an essential disk in $V$. 
Let $D$ be the union of disjoint disks in $V$ with $\partial D=\partial P$. Then the union $P\cup D$ is called the $2$-sphere induced by $P$. 
\begin{definition}
A handlebody-knot $V$ is $n$-decomposable if 
its exterior $\Compl V$ admits an essential meridional planar surface $P$ with $n$ boundary components. 
We call $P$ an \emph{$n$-decomposing surface} of $V$ and the $2$-sphere $S$ induced by $P$ an \emph{$n$-decomposing sphere} of $V$. 
\end{definition}

\subsection{Essential disks, annuli and tori}
Here we examine the existence of essential disks, annuli and tori in the exterior of an $n$-decomposable handlebody-knot $V$ in terms of an $n$-decomposing surface $P$.

\begin{lemma}\label{lm:disk}
Suppose $V$ is reducible. Then there exists an essential disk in $\Compl V$ disjoint from $P$.    
\end{lemma}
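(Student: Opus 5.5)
Take an essential disk $D$ in $\Compl V$ and put it in general position with respect to the $n$-decomposing surface $P$. Among all essential disks in $\Compl V$, choose $D$ so that $\vert D\cap P\vert$ is minimal. The goal is to show that this minimum is zero, arguing by contradiction. Since $P$ is essential (incompressible and $\partial$-incompressible) in $\Compl V$, the components of $D\cap P$ are simple closed curves and properly embedded arcs in both $D$ and $P$.

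First remove closed curves. Take a closed component $c$ of $D\cap P$ that is innermost in $D$, bounding a disk $D_c\subset D$ with $\mathring D_c\cap P=\emptyset$. Incompressibility of $P$ makes $c$ bound a disk $E\subset P$. The sphere $D_c\cup E$ bounds a $3$-ball in $\Compl V$, because $\Compl V$ is irreducible: it is the exterior of a handlebody in $S^3$. Swapping $D_c$ for $E$ and pushing off $P$ removes $c$, and possibly more curves, without changing $\partial D$. The new disk is isotopic to $D$ rel boundary, hence still essential. This contradicts minimality, so $D\cap P$ consists of arcs only.

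Next remove arcs. Take an arc $\alpha$ of $D\cap P$ that is outermost in $D$, cutting off a disk $D_\alpha\subset D$ with $D_\alpha\cap P=\alpha$ and $\partial D_\alpha=\alpha\cup\beta$, where $\beta\subset\partial\Compl V$. Then $D_\alpha$ is a $\partial$-compressing disk for $P$, so by $\partial$-incompressibility $\alpha$ is inessential in $P$: it cuts off a disk $E\subset P$ with $\partial E=\alpha\cup\gamma$ and $\gamma\subset\partial P\subset\partial\Compl V$. Take $\alpha$ outermost in $P$ among arcs of $D\cap P$, so that $\mathring E\cap D=\emptyset$. Then $\Delta=D_\alpha\cup E$ is a disk properly embedded in $\Compl V$. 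Also cut $D$ along $\alpha$, giving $D=D_\alpha\cup D'$, and form $D''=D'\cup E$, pushed slightly off $P$ so that it meets $P$ in fewer arcs than $D$ does.

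The main obstacle is to show that one of $\Delta$ and $D''$ is essential. Compressing $\partial D$ along $E$ (a band move of $\partial D$ across $\gamma$) expresses $[\partial D]$ as the combination of $[\partial \Delta]$ and $[\partial D'']$ in $\partial\Compl V$. If both $\partial\Delta$ and $\partial D''$ bounded disks in $\partial\Compl V$, then $\partial D$ would too, because their union together with the band is planar. Here I plan to use that $\partial\Compl V$ is a closed surface and that both new disks avoid a neighbourhood of $\gamma$. Then $D$ would cut off a ball, since $\Compl V$ is irreducible, contradicting essentiality. So one of them, say $\Delta'$, has boundary essential in $\partial\Compl V$, and is therefore an essential disk, since a boundary-parallel disk has inessential boundary.

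Finally, $\Delta$ meets $P$ only in $\alpha$, which can be pushed off entirely, and $D''$ meets $P$ in fewer arcs than $D$. Either way $\Delta'$ contradicts minimality, so $\vert D\cap P\vert=0$ and the minimal $D$ is disjoint from $P$. The step needing the most care is the surface-topology claim that a band sum of two inessential curves is inessential. I would handle it by observing that if the two curves bound disjoint disks in $\partial\Compl V$ (after isotopy, with the band outside them or nested), then the band sum bounds the union of those disks with the band, or their difference.
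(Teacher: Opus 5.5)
Your proof is correct and follows the same route as the paper. You minimize $\vert D\cap P\vert$ and use an outermost arc to contradict $\partial$-incompressibility of $P$. Your cut-and-paste, showing one of $D_\alpha\cup E$, $D'\cup E$ stays essential by the band-sum observation, is exactly what the paper compresses into ``by the minimality'', and your removal of closed curves is a step the paper leaves implicit.

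The only tidying needed is to do the swap at a curve innermost in $E\subset P$, and the arc surgery at an arc outermost in the disk $E$ rather than in $D$. Then $\Delta$ may meet $P$ in more than one arc, but both surgered disks still meet $P$ in fewer components than $D$, so minimality is still contradicted.
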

\begin{proof}
Choose an essential disk $D\subset \Compl V$ such that $\vert D\cap P\vert$ is minimized among all essential disks in $\Compl V$.  
If $\vert D\cap P\vert\neq 0$, then the outermost disk $D'$ cut off by $P$ from $D$ meets $P$ at an essential arc by the minimality. This contradicts that $P$ is essential, so $\vert D\cap P\vert=0$; that is, $D$ is disjoint from $P$.
\end{proof}

Let $X_1,X_2$ be $3$-manifolds cut off by $P$ from $\Compl V$, and $F_i:=X_i\cap V$, $i=1,2$. 
Consider an annulus $A\subset \Compl V$ with $\vert A\cap P\vert$ minimized in its isotopy class. Then we have the following.
\begin{lemma}\label{lm:annulus}
Suppose $V$ is irreducible.
\begin{enumerate}[label=(\roman*)]
    \item\label{itm:disjoint} If $\vert A\cap P\vert=0$, namely, $A\subset X_i$, $i=1$ or $2$, then $A$ is essential in $\Compl V$ if and only if $A$ has no ($\partial$-)compressing disk in $X_i$ disjoint from $P$.
    \item\label{itm:intersecting} If $\vert A\cap P\vert>0$, then $A$ is essential if and only if $P$ cuts $A$ into some disks, each of which meets $P$ (resp.\ $F_i$) at two essential arcs in $P$ (resp.\ in $F_i$), $i=1$ or $2$.
\end{enumerate}
\end{lemma}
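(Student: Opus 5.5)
For \ref{itm:disjoint}, the forward direction is immediate. A compressing or $\partial$-compressing disk of $A$ in $X_i$ that misses $P$ is also one in $\Compl V$, so an essential $A$ has none. (If $A$ is $\partial$-parallel, it is $\partial$-compressible.)

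For the converse, assume $A\subset X_i$ has no compressing disk and no $\partial$-compressing disk in $X_i$ disjoint from $P$. I will show that $A$ is incompressible and $\partial$-incompressible in $\Compl V$, and then deduce that it is not $\partial$-parallel.

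\emph{Incompressibility.} Take a compressing disk $D$ for $A$ in $\Compl V$ with $\vert D\cap P\vert$ minimal. Since $P$ is incompressible and $\Compl V$ is irreducible (being the exterior of a handlebody-knot in $S^3$), a standard innermost-disk swap removes every circle of $D\cap P$. Then $D\subset X_i$, a contradiction.

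\emph{$\partial$-incompressibility.} Take a $\partial$-compressing disk $D$, with arc $\alpha=D\cap A$ and $\beta=D\cap\partial\Compl V$, and minimize $\vert D\cap P\vert$. Circles of intersection are removed as above. Arcs of $D\cap P$ have endpoints on $\beta$, since $A\cap P=\emptyset$. Take an outermost arc in $D$ cutting off a subdisk $D'$ whose boundary lies in $P\cup\partial\Compl V$, away from $\alpha$. Then $D'$ is a $\partial$-compressing disk for $P$. Because $P$ is essential, $D'\cap P$ must be inessential in $P$: it cobounds a disk $E$ in $P$ together with an arc in $\partial P$. The disk $D'\cup E$ is then a disk with boundary on $\partial\Compl V$. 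This boundary is inessential since $V$ is irreducible, so it bounds a disk in $\partial\Compl V$, and the union cobounds a ball. Isotoping across that ball reduces $\vert D\cap P\vert$, again a contradiction. So $D$ misses $P$, against the hypothesis.

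\emph{Not $\partial$-parallel.} Suppose $A$ is parallel into $\partial\Compl V$ through a solid torus $W$. If $W$ meets $P$, then components of $P\cap W$ are incompressible surfaces in $W$ with boundary in the parallelism region of $\partial\Compl V$. They are therefore parallel disks or annuli, which would violate the essentiality of $P$ or let $A$ be isotoped. The remaining case is that $W$ misses $P$. Then the parallelism lies in $X_i$, and it produces a $\partial$-compressing disk in $X_i$ disjoint from $P$ (a meridional rectangle of $W$). This step, reconciling ``$\partial$-parallel'' with the absence of $\partial$-compressing disks missing $P$, is where I expect the care to be needed.

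For \ref{itm:intersecting}, I will first reduce $A\cap P$. Minimality of $\vert A\cap P\vert$, incompressibility of $A$ and $P$, and irreducibility remove every trivial circle and every inessential arc.

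If $A\cap P$ contains circles essential in $A$, then each is a core of $A$. Such a curve lies in the planar surface $P$. If it bounds a disk in $P$, then $A$ compresses. Otherwise it is parallel to a component of $\partial P$, so it bounds a meridian disk of $V$. Together with half of $A$ this gives a disk showing that $\partial A$ bounds essential disks in $V$, which makes $A$ $\partial$-compressible. So essential $A$ meets $P$ in essential spanning arcs, and $P$ cuts $A$ into rectangles. Each rectangle meets $P$ in two arcs and meets $F_i$ in two arcs.

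The $F_i$-arcs are essential in $F_i$, since an inessential one gives a $\partial$-compression of $A$ that reduces $\vert A\cap P\vert$. The $P$-arcs are essential in $P$, since an inessential one would let us isotope $A$ to reduce the intersection.

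Conversely, suppose every rectangle meets $P$ and $F_i$ in essential arcs. Given a compressing or $\partial$-compressing disk for $A$, I minimize its intersection with $P$. An outermost subdisk then yields either an inessential arc in some rectangle's $P$ or $F_i$ sides, or a $\partial$-compression of $P$, contradicting the hypothesis in each case. Finally, $\partial$-parallelism would force some rectangle to be parallel into $F_i$, which makes its arcs inessential, again a contradiction.
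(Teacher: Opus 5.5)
Part (i) follows the paper. Compressing disks are pushed off $P$ using its incompressibility, and an outermost subdisk of a $\partial$-compressing disk minimizing $|D\cap P|$ would $\partial$-compress $P$. Your separate ``not $\partial$-parallel'' step is unnecessary, and its case analysis is moot because $P$ is connected and disjoint from $A$. A $\partial$-parallel annulus is $\partial$-compressible via $\alpha\times I$ in the product region, which is why the paper can pass directly from ``inessential and incompressible'' to ``$\partial$-compressible''. Your converse in (ii) matches the paper in outline, but the mechanism should be stated. Surger the rectangle $D_j$ along the outermost subdisk to get two disks, each meeting $P$ and $F_i$ in one arc. Then $\partial$-incompressibility of $P$ and irreducibility of $V$ force the $F_i$-arcs of $D_j$ to be inessential.

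The genuine gap is in the forward direction of (ii): your disposal of circles of $A\cap P$ that are essential in both $A$ and $P$. You glue half of $A$, the annulus in $P$ between the circle and a component $a$ of $\partial P$, and the meridian disk bounded by $a$. That disk lies mostly in $E(V)$, so it gives no disk in $V$ bounded by $\partial A$. Even meridional boundary would not make $A$ $\partial$-compressible: $2$-decomposing annuli have meridional boundary and are essential. (For $n>3$ an essential circle in $P$ need not even be parallel to $\partial P$.)

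In fact such circles cannot be excluded under the stated hypotheses. Suppose the spine edge crossing the disk bounded by $a$ carries nontrivial local knots $K_1\subset B_1$ and $K_2\subset B_2$. Let $A$ be the annulus cut from a sphere enclosing both knots.
\begin{itemize}
\item $A$ is incompressible: its core is homotopic to $a$, which is nontrivial in $\pi_1(E(V))$ because $\partial E(V)$ is incompressible.
\item $A$ is not $\partial$-parallel, since it cobounds the exterior of $K_1\# K_2$ with an annulus in $\partial V$. Hence $A$ is essential.
\item $A$ meets $P$ in a single circle parallel to $a$.
\item For suitable $K_1,K_2$, $A$ cannot be isotoped off $P$, since one $X_i$ would then contain a local knot of type $K_1\# K_2$.
\end{itemize}
So no argument of the shape you give can work.

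The paper's proof does not fill this hole either. It simply asserts that, by the essentiality of $A$ and $P$, $A\cap P$ consists of essential arcs. A correct forward direction needs an extra hypothesis together with a real argument excluding such circles, or a conclusion that allows annular pieces. In the paper's applications $V$ is atoroidal, which rules out this particular example.
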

\begin{proof}
\ref{itm:disjoint}: The direction $\Rightarrow$ is clear as every ($\partial$-)compressing disk of $A$ in $X_i$ disjoint from $P$ is a ($\partial$-)compressing disk of $A$ in $\Compl V$.
For the direction $\Leftarrow$, we suppose $A$
is inessential in $\Compl V$. 
Then $A$ cannot be compressible, for every compressing disk of $A$ in $\Compl V$ can be isotoped away from $P$ by the incompressibility of $P$. Thus $A$ is $\partial$-compressible.
Let $D$ be a $\partial$-compressing disk such that 
$\vert D\cap P\vert$ minimized among all $\partial$-compressing disks of $A$. By the assumption, $D\cap P\neq \emptyset$, and hence there is an outermost disk in $D'$ cut off by $D\cap P$ with $D'\cap A=\emptyset$. By the minimality $D'\cap P$ is essential in $P$, contradicting $P$ is essential. 

\ref{itm:intersecting}: Consider first the direction $\Rightarrow$. By the essentiality of $A,P$, the intersection $A\cap P$ are essential arcs in $A$ and $P$. Thus, by the minimality, if $D_j\subset F_i$, then $D_j\cap F_i$ are two essential arcs. 

For the direction $\Leftarrow$, we suppose otherwise: $A$ is inessential. Suppose $A$ is compressible and $D$ is a compressing disk minimizing $\vert D\cap P\vert$
among all compressing disks of $A$. Then every outermost disk $D'\subset D$ cut off by $P$ 
meets $D_j$ at an arc. The frontier of a regular neighborhood of $D_j\cup D'$ consists of two disks $E_1,E_2$; $E_i$ meets $P$ at an arc $\alpha_i$, $i=1,2$, which is inessential in $P$ since $P$ is $\partial$-incompressible. Let $E_i'\subset P$ be the disk cut off by $\alpha_i$. Then $E_i\cup E_i'$ induces a disk whose boundary is entirely in $\partial V$. By the irreducibility of $V$, 
the disk cuts off a $3$-ball from $\Compl V$, contradicting $D_j\cap F_i$ are two essential arcs.

As a result, $A$ is $\partial$-compressible. Let 
$D$ be a $\partial$-compressing disk of $A$ such that it minimizes 
\[\{\vert D\cap P\vert \mid \text{$D\cap A$ is parallel to arcs in $A\cap P$}\}.\]
Let $D_j$ be the disk with $D\cap D_j=D\cap A$. 
If $D\cap P=\emptyset$, then compressing $D_j$ along $D$ gives a $\partial$-compressing disk of $P$, a contradiction. 
If $D\cap P\neq \emptyset$, then, by the minimality, any outermost disk in $D$ cut off by $D\cap P$ and disjoint from $D_j$ is a $\partial$-compressing disk of $P$, a contradiction.   
\end{proof}

For essential tori in $\Compl V$, we have an analog of Lemma \ref{lm:disk} for $n=3$. 
\begin{lemma}\label{lm:torus} 
Suppose $V$ 
admits a $3$-decomposing surface $P$. Then $\Compl V$ contains an essential tours if and only if $\Compl V$ contains an essential torus disjoint from $P$.
\end{lemma}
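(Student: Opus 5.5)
The direction $\Leftarrow$ needs an argument too, because an incompressible torus in $X_i$ might a priori compress in $\Compl V$. Suppose $T\subset X_i$ is essential in $\Compl V$ only if we know it is incompressible there. If $T$ compresses in $\Compl V$ along a disk $D$, we choose $D$ with $\vert D\cap P\vert$ minimal. Since $P$ is incompressible and $\Compl V$ is irreducible, innermost-circle arguments remove all circles of $D\cap P$. Hence $D\subset X_i$ and $T$ is compressible in $X_i$. So an essential torus in $X_i$ (meaning incompressible and not parallel to a torus component of $\partial X_i$, of which there are none) is essential in $\Compl V$. When the statement is applied to the $\tau$/$\rho$-tangle exteriors, "essential in $\Compl V$" and "essential in $X_i$" coincide, because the essential-disk condition localizes in the same way.

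For the direction $\Rightarrow$, let $T$ be an essential torus in $\Compl V$ chosen so that $\vert T\cap P\vert$ is minimal. Both $T$ and $P$ are incompressible, and $\Compl V$ is irreducible since $V\subset S^3$ is a handlebody. A standard innermost-disk argument therefore lets us assume that every curve of $T\cap P$ is essential in both $T$ and $P$. Suppose $T\cap P\neq\emptyset$. Then $T\cap P$ is a nonempty collection of parallel essential curves on $T$, and $P$ cuts $T$ into annuli $A_1,\dots,A_m$, each lying in $X_1$ or $X_2$ with boundary in $P$. On the pair of pants $P$, each essential simple closed curve is parallel to one of the three boundary components $\partial P$, and each of those bounds a meridian disk of $V$.

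The key step is to derive a contradiction from this. Take a curve $c\subset T\cap P$. It is parallel in $P$ to a component of $\partial P$, so $c$ bounds a disk in $S^3$ meeting $V$ in a single meridian disk of $V$. In particular $c$ is homotopic in $\Compl V$ to a meridian curve on $\partial V$. Now consider an annulus $A_j\subset X_i$ with $\partial A_j\subset P$. We use that $X_i$ is the exterior of a $3$-tangle in a ball, with $P\subset\partial X_i$ a pair of pants. We then argue that $A_j$ is either $\partial$-parallel in $X_i$ into $P\cup F_i$ or compressible. A compression is excluded, since it would compress $T$. If $A_j$ is parallel into $\partial X_i$ through a region meeting only $P$, then we can isotope $T$ across it and reduce $\vert T\cap P\vert$, contradicting minimality. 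The remaining case is that $A_j$ is parallel through a region containing part of $F_i$, with $\partial A_j$ consisting of curves parallel to components of $\partial P$. In that case $T$ cobounds a solid torus neighbourhood of a meridian-type curve, which means $T$ bounds a solid torus or is boundary-parallel in $\Compl V$, contradicting essentiality.

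The main obstacle is this analysis of annuli $A_j\subset X_i$ with both boundary curves in $P$. My plan is to use the fact that $\partial X_i$ is a genus-two surface (for $n=3$), together with the algebraic observation that both boundary curves of $A_j$ are parallel to meridian curves. If the two curves are parallel to distinct boundary components of $P$, the annulus would give a homology between distinct meridians in $H_1(\Compl V)$, which is impossible. If they are parallel to the same component, the annulus together with an annulus in $P$ forms a torus in $X_i$, and the atoroidality argument can be applied inductively. Failing that, I would fall back on passing to the sphere $S$: each annulus $A_j$ with boundary in $P$ can be capped by meridian disks of $V$ to give spheres meeting $V$ in two disks, and irreducibility gives the required parallelism.
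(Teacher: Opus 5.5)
Your proposal has a genuine gap, at exactly the step you flag as the main obstacle. The dichotomy ``$A_j$ is either $\partial$-parallel in $X_i$ or compressible'' is asserted without proof, and it is false in general for annuli in $X_i$ with boundary in $P$. Suppose both curves of $\partial A_j$ are parallel to the same component of $\partial P$, and let $A'\subset P$ be the annulus between them. The torus $A_j\cup A'$ can bound a nontrivial knot exterior inside $X_i$ (e.g.\ when an edge of $G_i$ carries a local knot), and then $A_j$ is incompressible and not $\partial$-parallel in $X_i$.

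The missing idea, which is the core of the paper's proof, is to work with the torus $A_j\cup A'$ itself. Pushed off $P$, there are two possibilities.
\begin{itemize}
\item It is incompressible in $\Compl V$. Then it is an essential torus disjoint from $P$, contradicting minimality of $\vert T\cap P\vert$ over \emph{all} essential tori.
\item It is compressible. Its core is essential in the incompressible surface $P$, so it cannot lie in a ball; hence it bounds a solid torus $U\subset\Compl V$. The curve of $\partial A'$ nearer to that component of $\partial P$ bounds a disk in $S$. This disk meets the torus only in its boundary and contains a meridian disk of $V$, so it lies outside $U$. Hence $U$ is unknotted with $A'$ longitudinal, $U\cong A'\times I$, and isotoping $T$ across $U$ lowers $\vert T\cap P\vert$.
\end{itemize}
Your ``atoroidality argument applied inductively'' does not supply this, since the lemma has no atoroidality hypothesis. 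The fallback of capping $A_j$ with meridian disks of $V$ produces spheres meeting $V$ in two disks, and irreducibility of $\Compl V$ says nothing about those.

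The homological step is also wrong, because you compute in $H_1(\Compl V)$. When one tangle is a $\rho$-tangle, the two disks of $S\cap V$ at the ends of the arc $t_1$ are parallel in $V$, so their boundaries are homologous up to sign in $\Compl V$. In a $\rho\rho$-decomposition all three disks are separating, so all three curves are null-homologous. Correspondingly, a piece with ends parallel to two \emph{different} components of $\partial P$ really does occur in a $\rho$-tangle exterior. It is the frontier of a slightly enlarged regular neighbourhood of $t_1$, which is parallel into $\partial X_i$ across the annulus component of $X_i\cap V$. That is your ``remaining case''. Your conclusion there, that $T$ bounds a solid torus or is $\partial$-parallel, does not follow from one piece of $T$ being such a tube.

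The repair is to compute in $H_1(X_i)$ and choose the side carefully.
\begin{itemize}
\item In a $\tau$-tangle exterior the three curves of $\partial P$ are pairwise non-homologous, so every piece of $T$ on a $\tau$-side has parallel ends.
\item In a $\rho$-tangle exterior, the boundary of the once-punctured torus component of $X_i\cap V$ is null-homologous, while the other two curves are not.
\item So in the $\rho\rho$ case, a piece in $X_1$ with non-parallel ends has one end parallel to that distinguished curve of $X_2$. (The two distinguished curves differ because $V$ is connected.) The adjacent piece in $X_2$ then has parallel ends.
\end{itemize}
Either way some piece has parallel ends, and the argument above applies. (Minor: $\Leftarrow$ is trivial as stated, since the torus on the right-hand side is already essential in $\Compl V$.)
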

\begin{proof}
The direction $\Leftarrow$ is clear.
To see the direction $\Rightarrow$, we let $T\subset\Compl V$ be an essential torus and isotope $T$ so it minimizes $\vert T\cap P\vert$ among all essential tori in $\Compl V$.  

\subsection*{Claim: $T$ is disjoint from $P$.}
Suppose otherwise. Then there are the two cases: 

\subsubsection*{Case $1$: One of $X_1\cap V,X_2\cap V$, say $X_1\cap V$, is connected.}
In this case, any annulus $A\subset T\cap X_1$ has parallel boundary components in $P$, and hence $\partial A$ cuts off from $P$ and an annulus $A'$. By the minimality, the torus $A\cup A'$ bounds a solid torus $U$. Since the core of $A'$ bounds a disk disjoint from $U$. One can isotope $T$ through $U$ to remove the intersection $\partial A\subset T\cap P$, contradicting the minimality.

\subsubsection*{Case $2$: Neither $X_1\cap V$ nor $X_2\cap V$ is connected.} In this case, $X_i\cap V$ is a disjoint union of a once-punctured closed surface $F_i$ and a twice-punctured closed surface, $i=1,2$. 
Consider an annulus $A\subset T\cap X_1$. If components of
$\partial A$ are parallel in $P$, then the same argument in the preceding case gives us a contradiction. If components of $\partial A$ are not parallel, 
then one component $C$ of $\partial A$ is parallel to $\partial F_2$. Hence, the annulus $A'\subset T\cap X_2$ adjacent to $C$ has parallel boundary components. 
We thus reduce it to the previous case.
\end{proof}

\subsection{Genus two case}\label{subsec:genus_two}
Recall that a $3$-decomposition of a genus two handlebody-knot $V$ induces an essential $3$-tangle decomposition of a spine of $V$, and vice versa. 
Thus, we also refer to an essential $3$-tangle decomposition $(B_1,G_1)\cup_S (B_2,G_2)$ of a spine $\Gamma$ of a genus two handlebody-knot $V$ 
as a \emph{$3$-decomposition} of $V$. 
Recall also, that $(B_1,G_1)$, $(B_2,G_2)$ are $\tau$- or $\rho$-tangle(s), and based on this, a $3$-decomposition of $V$ can be further classified into $\tau\tau$-, $\tau\rho$-, or $\rho\rho$-decomposition.
Here we examine in more details the topology of a $3$-decomposable genus two handlebody-knot.

\begin{lemma}\label{lm:genus_two_disk}
A $3$-decomposable genus two handlebody-knot $V$ is irreducible.
\end{lemma}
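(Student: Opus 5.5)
Suppose, for contradiction, that $V$ is reducible. By Lemma \ref{lm:disk} we can choose an essential disk $D\subset \Compl V$ disjoint from the $3$-decomposing surface $P$. Then $D$ lies in one of the pieces, say $X_1=\Compl{G_1}$, with $\partial D\subset F_1$. The plan is to show that $F_1$ cannot carry the boundary of such a disk unless $P$ fails to be essential. We split into cases according to the type of $(B_1,G_1)$.

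\emph{Case $\tau$.} Here $V_1$ is a $3$-ball and $F_1$ is a disk with three holes, each hole glued to a component of $\partial P$. Since $F_1$ is planar, the curve $\partial D$ separates $F_1$. Being essential in $\partial\Compl V$, it must separate the three boundary circles into one and two. It therefore cobounds with some component $c$ of $\partial P$ an annulus $A'\subset F_1$, or it encloses two components. In the first situation, $D\cup A'$ is a disk in $X_1$ bounded by $c$. Pushing it off yields a compressing disk for $P$ in $\Compl V$, or shows $c$ is inessential in $\partial V$; either way this contradicts that $c$ bounds an essential disk of $V$ and $P$ is essential. In the second situation, consider the complementary subsurface of $F_1$ bounded by $\partial D$ and the remaining single component $c'$. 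By the same argument, $c'$ together with an annulus bounds a disk, again a contradiction.

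\emph{Case $\rho$.} Here $V_1$ consists of a $3$-ball, meeting $S$ in one disk, together with a solid torus meeting $S$ in two disks. So $F_1$ is a disk $F'$ with one hole union a twice-punctured torus $F''$. If $\partial D\subset F'$, we argue exactly as in Case $\tau$. If $\partial D\subset F''$, we distinguish two subcases.
\begin{enumerate}[label=(\alph*)]
\item $\partial D$ is separating in $F''$. Then it cuts off either an annulus to a component of $\partial P$, handled as before, or a once-punctured torus $T_0$. In the latter case $D$ together with $T_0$ bounds a torus in $B_1$.
\item $\partial D$ is non-separating in $F''$. Then $D$ caps off the solid torus part, and $N(D\cup V_1)$ meets $P$ nontrivially.
\end{enumerate}

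To handle these subcases I would use that $D\cap V=\partial D$ lies on the boundary of the solid torus component $W$ of $V_1$. Compressing $\partial W$ along $D$ produces a sphere. A non-separating $\partial D$ means $W\cup N(D)$ is a $3$-ball, so $D$ together with a meridian disk of $W$ forms a $\partial$-reducing configuration. Banding $D$ along an arc in $F''$ to a component $c$ of $\partial P\cap\partial W$ then gives a disk in $X_1$ meeting $P$ in a single essential arc. This contradicts the $\partial$-incompressibility of $P$, i.e.\ the essentiality of $(B_1,G_1)$ in the sense of Definition \ref{def:essential_multi-tangle}.

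The main obstacle I expect is the $\rho$ case with $\partial D\subset F''$ (the torus-cut-off and non-separating situations), because planarity is lost there. For the separating subcase I would first try the following: the sphere obtained from $D$ and the one-holed torus region bounds a ball in $S^3$ containing $W$. This would show that the loop of $G_1$ is split off from the rest of the tangle by a disk meeting $P$ in at most one arc, again contradicting essentiality of the tangle.
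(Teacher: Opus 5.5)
Your $\tau$ case is fine. It is the paper's observation that every essential curve in the pair of pants $F_1$ is parallel to a component of $\partial P$, so pushing $D$ across the annulus gives a compressing disk of $P$. (Note that $F_1$ is a disk with two holes, not three.)

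The $\rho$ case, however, is built on a wrong picture of $V_1$. Here $t_1$ is an arc with both ends on $\partial B_1$, and $t_\rho$ is a loop with a single arc running to $\partial B_1$. So $N(t_1)$ is a ball meeting $S$ in \emph{two} disks, and $W=N(t_\rho)$ is a solid torus meeting $S$ in \emph{one} disk. Hence $F_1$ is an annulus $A_1$ (bounded by $a\cup b$) together with a \emph{once}-punctured torus $T_\rho$ (bounded by $c$). Your configuration, with a ball meeting $S$ in a single disk, cannot occur: that component of $\partial P$ would bound an inessential disk in $V$. With the correct picture, essential curves in $A_1$ and separating essential curves in $T_\rho$ are all boundary-parallel, and are handled exactly as in the $\tau$ case. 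Your subcase with a once-punctured torus $T_0$ is therefore not a separate case. The argument you sketched for it also does not make sense, since $D\cup T_0$ is a torus, not a sphere.

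The genuine gap is the non-separating subcase. Banding $D$ along an arc $\gamma\subset T_\rho$ from $\partial D$ to $c$, i.e.\ taking the frontier of $N(D\cup\gamma)$, only slides $\partial D$ across $c$. The resulting disk meets $P$ in one arc that cuts off a small half-disk of $P$. That arc is inessential, so nothing contradicts $\partial$-incompressibility.

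The missing idea is to use a curve $m\subset T_\rho$ meeting $\partial D$ exactly once; it exists because $\partial D$ is non-separating. In the paper, $\partial D$ is the preferred longitude and $m$ a meridian. Then $N(D\cup m)$ is a solid torus meeting $\partial\Compl{G_1}$ in the once-punctured torus $N(\partial D\cup m)\subset T_\rho$. Its frontier is therefore a disk in $\Compl{G_1}$ whose boundary is parallel to $c$. A curve parallel to $c$ does not bound a disk in $P$, so this is a compressing disk of $P$, contradicting that $P$ is essential. This is the paper's proof.

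Equivalently, your own observation that $W\cup N(D)$ is a $3$-ball already finishes the argument once you use that $W$ meets $S$ in a single disk. The frontier of that ball in $B_1$ is a disk bounded by $c$, disjoint from $t_1$. Pushed off $W$, it is exactly this compressing disk, so no $\partial$-compression is needed.
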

\begin{proof}
Let $(B_1,G_1)\cup_S (B_2,G_2)$ be a $3$-decomposition of $V$.
Then, by Lemma \ref{lm:disk}, if $V$ is reducible, then there is an essential disk $D$ disjoint from $P:=S\cap \Compl V$. It may be assumed that 
$D\subset \Compl{G_1}$. Note that $(B_1,G_1)$ cannot be a $\tau$-tangle since the frontier $F$ of $N(G_1)$ is a pair of pants. Because $(B_1,G_1)$ is a $\rho$-tangle, the frontier $F$ of $N(G_1)$ consists of an annulus and a once-punctured torus $T_\rho$, and thus $\partial D$ is the preferred longitude of $T_\rho$ in $B_1$. Consider a meridian $m$ of $T_\rho$ meeting $D$ at one point. Then the frontier of a regular neighborhood of $D\cup m$ induces a compressing disk of $P$, a contradiction. 
\end{proof}

Applying Lemma \ref{lm:torus} to the present case gives us the following. 
\begin{lemma}\label{lm:genus_two_torus}
Given a $3$-decomposition $(B_1,G_1)\cup_S (B_2,G_2)$ of a genus two handlebody-knot $V$, then $V$ is atoroidal if and only if both $(B_i,G_i)$, $i=1,2$, are atoroidal.
\end{lemma}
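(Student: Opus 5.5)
By Lemma \ref{lm:torus}, applied to the $3$-decomposing surface $P:=S\cap\Compl V$, the exterior $\Compl V$ contains an essential torus if and only if it contains one disjoint from $P$. Such a torus lies in $X_1=\Compl{G_1}$ or $X_2=\Compl{G_2}$. The lemma therefore reduces to comparing essentiality of a torus $T\subset\Compl{G_i}$ in $\Compl{G_i}$ with its essentiality in $\Compl V$. I will prove the two implications separately, and in both I only use that $T$ misses $P$.

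\emph{($\Rightarrow$).} Suppose $\Compl V$ is toroidal. By Lemma \ref{lm:torus} we get an essential torus $T\subset\Compl V$ with $T\subset\Compl{G_i}$ for some $i$. I claim $T$ is essential in $\Compl{G_i}$.
\begin{enumerate}[label=(\roman*)]
\item \emph{Incompressible.} A compressing disk of $T$ in $\Compl{G_i}$ is also a compressing disk in $\Compl V$, which is impossible.
\item \emph{Not $\partial$-parallel.} Every component of $\partial\Compl{G_i}$ has genus $2$ or contains $P$. So if $T$ were parallel into $\partial\Compl{G_i}$, the product region would be $T\times I$ with $T\times\{1\}\subset\partial\Compl{G_i}$. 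The only closed torus components of $\partial\Compl{G_i}$ do not exist here, since $\partial \Compl{G_i}=P\cup F_i$ is a genus two surface.
\end{enumerate}
Hence $T$ is not $\partial$-parallel in $\Compl{G_i}$, and $(B_i,G_i)$ is toroidal.

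\emph{($\Leftarrow$).} Suppose $T\subset\Compl{G_1}$ is essential in $\Compl{G_1}$. I must show $T$ stays essential in $\Compl V=\Compl{G_1}\cup_P\Compl{G_2}$.

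For incompressibility, suppose $D$ is a compressing disk of $T$ in $\Compl V$ chosen to minimize $\vert D\cap P\vert$. Circles of $D\cap P$ can be removed: an innermost one bounds a disk in $P$, because $P$ is incompressible in $\Compl V$ (it is essential) and the $X_i$ are irreducible. If $D\cap P=\emptyset$, then $D\subset\Compl{G_1}$, contradicting the essentiality of $T$ there.

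For non-$\partial$-parallelism, note that $\partial\Compl V$ has genus two, so a torus in $\Compl V$ can only be non-essential by compressing. It remains to exclude the possibility that $T$ bounds a solid torus or lies in a ball. Incompressibility in the irreducible manifold $\Compl V$ already gives this, so I will check irreducibility of $\Compl V$. Any sphere in $\Compl V$ can be isotoped off the incompressible surface $P$ by innermost-disk arguments, and each $\Compl{G_i}$, being a tangle exterior in a ball, is irreducible.

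The main obstacle I expect is the converse direction's subtlety. An incompressible torus in $\Compl{G_1}$ might bound a region in $\Compl V$ homeomorphic to a knot exterior that contains part of $\Compl{G_2}$, but this cannot happen since $T\subset\Compl{G_1}$. So the delicate point is only the innermost-circle elimination of $D\cap P$. I will handle it using the essentiality of $P$ and the fact that essential surfaces in irreducible manifolds can be isotoped off compressing disks.
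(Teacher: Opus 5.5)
Your proposal is correct and follows the paper's route. The paper simply invokes Lemma \ref{lm:torus}; you additionally make explicit the step it leaves implicit, namely that a torus disjoint from $P$ is essential in $\Compl{G_i}$ if and only if it is essential in $\Compl V$, which you prove by an innermost-disk argument using the incompressibility of $P$, noting that boundary-parallelism is impossible because both boundaries have genus two (your extra discussion of irreducibility and of tori bounding solid tori or lying in balls is unnecessary but harmless).
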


Given Lemmas \ref{lm:genus_two_disk} and \ref{lm:genus_two_torus}, the subsequent sections focus on essential annuli in a genus two $3$-decomposable handlebody-knot $V$. 
Given a $\tau$-or $\rho$-tangle $(B,G)$, we denote by $F$ the frontier of a regular neighborhood $N(G)$ of $G$ in $B$, namely $F=N(G)\cap \Compl G$, and by $P$ the intersection $\Compl G\cap \partial B$. Note that $P\cup F=\partial \Compl G$ and $P\cap F$ are three circles, and $F$ is a pair of pants if $(B,G)$ is a $\tau$-tangle, and is a disjoint union of an annulus and a once-punctured torus if 
$(B,G)$ is a $\rho$-tangle. Motivated by Lemma \ref{lm:annulus}, we consider the following. 

\begin{definition}
Given a $3$-tangle $(B,G)$, a \emph{rectangle} in $\Compl G$ is a disk $R\subset \Compl G$ such that $\vert R\cap F\vert=\vert R\cap P\vert=2$, and it is \emph{good} if $R\cap F, R\cap P$ are essential in $F,P$, respectively. 
An annulus $A$ in $\Compl G$ is \emph{good} if $A$ is incompressible and $A$ admits no $\partial$-compressing disk 
disjoint from $P$. 
\end{definition}
 
Consider now the recognition problem, namely, to determine whether a meridional pair of pants $P$ in a genus two handlebody-knot exterior is a $3$-decomposing surface. This is equivalent to determing whether the $3$-tangle decomposition $(B_1,G_1)\cup_S (B_2,G_2)$ it induces is essential.  
Thus, by Lemma \ref{lm:essentiality}, $P$ is a $3$-decomposing surface if and only if both $(B_i,G_i)$, $i=1,2$, 
are essential. In the case both $(B_i,G_i)$, $i=1,2$, are atoroidal, non-triviality almost implies essentiality.   

\begin{lemma}\label{lm:atoroidal_essentiality}
Suppose $(B,G)$ is an atoroidal $\tau$- or $\rho$-tangle. Then $(B,G)$ is essential if and only if $(B,G)$ is non-trivial and not a Hopf $\rho$-tangle.
\end{lemma}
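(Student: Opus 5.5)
The direction $\Rightarrow$ is the easy one. A trivial $\tau$- or $\rho$-tangle $G$ lies in a disk $D\subset B$, and a suitable pushoff of $D$ (or of a subdisk of it) gives an essential disk $E\subset \Compl G$ with $\partial E\subset P$ or with $\partial E\cap P$ a single arc. In the $\tau$ case, take a disk separating one prong of the cone from the other two. In the $\rho$ case, take a disk separating $t_1$ from $t_\rho$. For a Hopf $\rho$-tangle, the disk $D'$ cut off by $t_\rho$, tubed appropriately along $t_1$, or the frontier of a neighborhood of $D'\cup t_1$-type arc, gives a disk meeting $P$ in one arc. The paper already notes that the Hopf $\rho$-tangle is inessential. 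Checking that these disks are essential amounts to observing that they separate non-isotopic boundary pieces.

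For $\Leftarrow$, suppose $(B,G)$ is atoroidal, non-trivial, not Hopf, and that $E\subset\Compl G$ is an essential disk with $\partial E\subset P$ or with $\partial E\cap P$ a single arc $\alpha$.

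\textbf{Case $\partial E\subset P$.} Since $P$ is a pair of pants, $\partial E$ is parallel to a component of $\partial P$, so it bounds a disk in $\partial B$ meeting $G$ in a single point. Then $E$ together with a neighborhood cuts $B$ into a ball containing one endpoint and an arc. Because $E$ is essential, the other side must contain something nontrivial, which yields a decomposing sphere meeting $G$ once. From this I would extract an essential sphere or torus in $\Compl G$: the boundary of a neighborhood of $E\cup$ (a piece of $N(G)$) is a swallow-follow torus, or a knotted edge piece gives an essential torus. Atoroidality then forces that piece to be a trivial ball-arc pair, contradicting the essentiality of $E$.

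\textbf{Case $\partial E\cap P=\alpha$, $\partial E\cap F=\beta$.} If $\alpha$ is inessential in $P$, isotope to reduce to the previous case or to a disk with boundary in $F$. So assume $\alpha$ is essential in $P$. Then $\beta$ is an arc in $F$. In the $\tau$ case $F$ is a pair of pants, so $E$ is a $\partial$-compressing disk for the frontier near an edge, and it exhibits that edge as parallel to an arc in $\partial B$. In the $\rho$ case, $\beta$ lies in the annulus $N(t_1)\cap F$ or in the once-punctured torus $T_\rho$. In the annulus it shows that $t_1$ is $\partial$-parallel. In $T_\rho$, $\beta$ is essential (otherwise reduce), and $E$ shows that the arc of $t_\rho$ is unknotted, running over the loop once, or that the loop is parallel into $\partial B$.

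Having straightened one edge, cut along $E$. The complement becomes a $2$-tangle-like piece: a ball with a knot-type arc, or a ball with a loop and a trivial arc. Atoroidality forces the remaining part to be unknotted, since a knotted arc in a ball yields an essential (swallow-follow) torus in $\Compl G$. I expect this deduction to be the main obstacle: I need to verify that the relevant torus is genuinely essential in $\Compl G$ and not merely in a sub-piece. This should yield that $G$ is trivial, or, in the $\rho$ case where $t_1$ passes once through the disk bounded by the loop, the Hopf $\rho$-tangle. Either outcome contradicts the hypothesis.

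In practice I would organize this final step as follows. Show that $\Compl G$ cut along $E$ is a handlebody (genus two) whose boundary pattern is standard. Then, using atoroidality together with the classification of unknotted $\theta$/handcuff pieces in a ball (irreducibility plus the fact that the pieces are $\partial$-parallel), identify $(B,G)$ with the trivial tangle or the Hopf $\rho$-tangle.
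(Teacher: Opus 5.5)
You have the paper's case split and its basic mechanism: use $E$ to straighten part of $G$, then rule out knotting in the leftover ball-arc pieces by atoroidality. The $\tau$ subcases and the $\rho$ subcase with $\partial E\cap F$ in the annulus around $t_1$ go through roughly as you sketch. (When $\alpha$ joins $a$ to itself in the $\tau$ case, $E$ extends to a disk containing $t_1$ and separating $t_2$ from $t_3$, rather than showing an edge is $\partial$-parallel.)

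The case $\partial E\subset P$, however, ends with the wrong contradiction. Since $E\cap G=\emptyset$, the sphere formed by $E$ and the disk of $\partial B$ bounded by $\partial E$ encloses an entire component of $G$ with exactly one end on $\partial B$. This is impossible for a $\tau$-tangle. For a $\rho$-tangle it means $E$ separates $t_1$ from $t_\rho$. Atoroidality then makes both pieces unknotted, so $(B,G)$ is trivial. The disk $E$ stays essential (it is essential in the trivial $\rho$-tangle), so the contradiction is with non-triviality of $(B,G)$, not with essentiality of $E$. Your argument never uses the non-triviality hypothesis.

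The genuine gap is the one subcase where the Hopf tangle arises. Here $(B,G)$ is a $\rho$-tangle, $\alpha$ joins the circle $c$ around the end of $t_\rho$ to itself and separates the circles $a,b$ around the ends of $t_1$, and $\beta$ is an essential arc in $T_\rho$. What you say $E$ ``shows'' in this case is not derived from anything. Your fallback also cannot work: every rational $\tau$- or $\rho$-tangle has a genus two handlebody exterior, and most of them are essential. So handlebody-ness plus atoroidality cannot identify $(B,G)$; all the content lies in the ``standard boundary pattern'', which you do not establish.

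The missing idea is to use that $\alpha$ separates $a$ from $b$. Let $P_a\subset P$ be the annulus cut off by $\alpha$ that contains $a$. The frontier of $N(P_a\cup E)$ contains an annulus $A$. One boundary circle of $A$ is a meridian of $t_1$, which bounds a meridian disk $D'$ of $N(t_1)$ meeting $t_1$ once. The other is a loop $\ell\subset T_\rho$, non-separating because $\beta$ is essential.

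Thus $\ell$ bounds the disk $A\cup D'$ in the exterior of $t_\rho$. Since $\ell$ is essential on the boundary torus of that knot exterior, the loop of $t_\rho$ must be unknotted with $\ell$ as its longitude. Then $E\cup A\cup D'$, together with an annulus of $\partial N(t_\rho)$ into which $t_\rho$ can be isotoped, gives a disk in $B$ containing $t_\rho$ and meeting $t_1$ exactly once. Cutting $(B,t_1)$ along this disk yields two ball-arc pairs, both trivial by atoroidality, and this is exactly a Hopf $\rho$-tangle.
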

\begin{proof}
Denote by $t_1,t_\rho\subset G$ the two components when $(B,G)$ is a $\rho$-tangle and by $t_1,t_2,t_3$ the three edges in $G$ if $(B,G)$ is a $\tau$-tangle
The direction $\Rightarrow$ is clear. 
For the direction $\Leftarrow$, we suppose 
$(B,G)$ is inessential. Then there are two cases: there is a compressing disk $D$ of $\partial \Compl G$ with $\partial D\subset P$ or a compressing disk $D$ of $\partial \Compl G$ with $D\cap P$ is an arc.
\subsection*{Case $1$: $D$ is a compressing disk.} In this case, $(B,G)$ is necessarily a $\rho$-tangle, and the disk $D$ 
separates $t_1,t_\rho$, and hence splits $\Compl G$ into two knot exteriors. By the atoroidality, they both are trivial knot exteriors.
In particular, if $B_1,B_\rho$ be the two $3$-balls with $t_i\in B_i$, $i=1$ or $\rho$, then 
there is a disk $D_i\subset B_i$ such that $t_i\subset D_i$, and hence $(B,G)$ is a trivial. 
\subsection*{Case $2$: $D$ is a $\partial$-compressing disk.} Suppose first that $(B,G)$ is a $\tau$-tangle. If $D$ meets two components of $\partial P$. It may be assumed that $D$ is in the exterior $\Compl{t_1\cup t_2}$ of $t_1\cup t_2$ in $B$, and hence $\partial D$ is a longitude of the knot exterior $\Compl{t_1\cup t_2}$. In particular, $D$ can be extended to a disk in $B$, still denoted by $D$, so $t_1\cup t_2\subset \partial D$. On the other hand, the exterior of $D$ in $B$ is a $3$-ball $B'$ such that $(B',t_3)$ is a trivial ball-arc pair by the atoroidality, and hence there is a disk $D'$ in $B'$ containing $t_3$. The union $D\cup D'$ implies that $(B,G)$ is trivial. 

If $D$ meets one component of $\partial P$, then it may be assumed that $D$ is in the exterior of $t_1$ and hence $D$ can be extended to a disk in $B$, still denoted by $D$, so that $t_1\subset D$. 
In particular, $D$ splits $B$ into two $3$-balls $B_2,B_3$ with $t_i\subset B_i$, $i=2,3$. By the atoroidality, $(B_i,t_i)$, $i=2,3$, both are trivial ball-arc pairs, and hence $(B,G)$ is trivial. 

Suppose now that $(B,G)$ is a $\rho$-tangle. 
If $D$ meets two components of $\partial P$, 
then $D$ is in the exterior $\Compl{t_1}$ of $t_1$. 
Let $m$ be the meridian in $\Compl{t_1}$ meeting $D$ at a point. Then the frontier of a regular neighborhood of $D\cup m$ is a compressing disk of $P$, and thus it is reduced to Case $1$. 

If $D$ meets only one component of $\partial P$, 
then $D$ is in the exterior $\Compl{t_\rho}$ of $t_\rho\subset B$. Note that $\partial D$ cuts $P$ into two annuli $A_1,A_2$, and the frontier of a regular neighborhood $A_1\cup D$ consists of a disk parallel to $D$ and an annulus $A$. One component of $\partial A$ bounds a meridian disk $D'\subset N(t_1)$ with $D'\cap P=\emptyset$, and the other component is in $\partial N(t_\rho)$. The latter, together with $\partial D$, cuts the frontier of $N(t_\rho)\subset B$ into two annuli $A',A''$. Since $t_\rho$ can be isotoped into $A'$, the union $D\cup A'\cup A\cup D'$ induces a disk in $B$ containing $t_\rho$ and 
cuts $(B,t_1)$ into two ball-arc pairs, both are trivial by the atoroidality. This implies $(B,G)$ is a Hopf $\rho$-tangle.
\end{proof} 

As an application of Lemmas \ref{lm:atoroidal_essentiality},   \ref{lm:genus_two_disk} and \ref{lm:genus_two_torus}, we obtain an alternative way to detect the irreducibility of handlebody-knots 
$5_2$, $5_3$, $6_2$, $6_3$, $6_4$, $6_5$, $6_6$, $6_7$, $6_9$, $6_{12}$, $6_{13}$ in \cite{IshKisMorSuz:12} since they all have a spine that admits a $3$-tangle decomposition $(B_1,G_1)\cup_S (B_2,G_2)$ with $(B_i,G_i)$, $i=1,2$, both atoroidal, non-trivial, and not a Hopf $\rho$-tangle; see Fig.\ \ref{fig:three_decomposable_six_crossing}, for example.

\subsection{Weak decomposition}\label{subsec:weak_decomposition}
A weaker notion of $n$-decomposability that drops the condition of boundary-incompressibility is introduced in Ishii-Kishimoto-Ozawa \cite{IshKisOza:15}.  
\begin{definition}\label{def:weak_decomposability}
A handlebody-knot $V$ is \emph{weakly $n$-decomposable} if 
its exterior $\Compl V$ admits an incompressible, non-boundary parallel, meridional planar surface $P$ with $n$ boundary components. The surface $P$ is called a \emph{weakly $3$-decomposing surface}.    
\end{definition}

The $n$-decomposability is strictly stronger than the weak $n$-decomposability in general. For a genus two handlebody-knot, however, the two notions are equivalent when $n=2$, and mildly different when $n=3$.
A handlebody-knot $V$ is said to \emph{having a Hopf $\rho$-summand} if there exists a $3$-ball $B\subset S^3$ such that the spine of $B\cap V$ in $B$ is a Hopf $\rho$-tangle; see Fig.\ \ref{fig:hopf_rho}. 

\begin{lemma}\label{lm:comparison_three_decomp}
Given a genus two handlebody-knot $V$, if $V$ is weakly $3$-decomposable and not $3$-decomposable, then either $V$ is $2$-decomposable or $V$ has a Hopf $\rho$-summand.
\end{lemma}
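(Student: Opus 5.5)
Let $P$ be a weakly $3$-decomposing surface of $V$. It induces a $2$-sphere $S$ and a $3$-tangle decomposition $(B_1,G_1)\cup_S(B_2,G_2)$ of a spine $\Gamma$ of $V$. As in the Introduction, $(B_i,G_i)$ is a $\tau$- or $\rho$-tangle. The plan is to use the proof of Lemma \ref{lm:atoroidal_essentiality} to analyse a $\partial$-compressing disk of $P$; atoroidality is only needed there to trivialise complementary ball-arc pairs.

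First, if $V$ is $2$-decomposable we are done, so assume it is not. Since $V$ is not $3$-decomposable, $P$ is incompressible but $\partial$-compressible. Choose a $\partial$-compressing disk $D$ of $P$, lying in some $X_i=\Compl{G_i}$, with $D\cap P$ an essential arc in $P$ and $D\cap F_i$ an arc. Now split into the cases of Lemma \ref{lm:atoroidal_essentiality}, Case 2.

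\textbf{$(B_i,G_i)$ a $\tau$-tangle, $D$ meeting two components of $\partial P$.} Extend $D$ to a disk in $B_i$ containing $t_1\cup t_2$. The frontier of a neighbourhood of $D\cup\partial B_i$ is a sphere meeting $\Gamma$ at two points, namely near $t_3$ and across the disk region. Its complement in $\Compl V$ should be a meridional annulus. If this annulus is essential, $V$ is $2$-decomposable. Otherwise it is inessential, and that should force $P$ to be $\partial$-parallel, contradicting Definition \ref{def:weak_decomposability}.

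\textbf{$(B_i,G_i)$ a $\tau$-tangle, $D$ meeting one component.} $D$ lies in the exterior of $t_1$ and splits $B_i$ into balls containing $t_2$ and $t_3$. This produces $2$-spheres meeting $\Gamma$ twice, and the same dichotomy applies: either a $2$-decomposition or the boundary-parallelism of $P$.

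\textbf{$(B_i,G_i)$ a $\rho$-tangle, $D$ meeting two components.} The frontier of $\rnbhd{D\cup m}$, with $m$ a meridian, compresses $P$, contradicting incompressibility.

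\textbf{$(B_i,G_i)$ a $\rho$-tangle, $D$ meeting one component.} This is the key case. The construction $D\cup A'\cup A\cup D'$ from the last paragraph of that proof gives a disk $\Delta$ in $B_i$ containing $t_\rho$ that meets $t_1$ once. Take a small ball $B$ around $\Delta$ (a neighbourhood of $\Delta$ together with a subarc of $t_1$). Then $B\cap\Gamma$ is a Hopf $\rho$-tangle: the loop $t_\rho$ bounds $D'\subset\Delta$, and $t_1$ punctures it once. Hence $V$ has a Hopf $\rho$-summand. The conclusion here does not depend on whether the complementary pieces are trivial, so atoroidality is not needed.

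The main obstacle is the $\tau$-tangle cases. There one must show that the twice-punctured sphere obtained is either an essential annulus in $\Compl V$, giving $2$-decomposability, or else that $P$ is parallel into $\partial V$. The approach I would try is as follows. If the meridional annulus $Q$ is $\partial$-compressible or compressible, then, since $V$ is irreducible when it is weakly $3$-decomposable and not $2$-decomposable (otherwise an essential disk gives a $1$-decomposition), $Q$ cuts off a region homeomorphic to (annulus)$\times I$ or a ball-arc knot exterior that is trivial. Reassembling this region with the piece of $B_i$ cut off by $D$ should show that one side of $P$ is a product $P\times I$. That contradicts the non-$\partial$-parallelism of $P$.
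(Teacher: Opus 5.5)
Your case split is the paper's: $\partial$-compress $P$ along $D$. In a $\tau$-side this gives one or two meridional annuli. In a $\rho$-side it gives either a compression of $P$ or a Hopf $\rho$-summand; your small ball around $\Delta$ is the same object as the paper's frontier of $N(A\cup E)$. The $\rho$-cases are fine.

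\textbf{The gap is in the $\tau$-cases.} They rest on the irreducibility of $V$, which you justify by ``otherwise an essential disk gives a $1$-decomposition''. A $1$-decomposition contradicts none of the hypotheses, so this is a non sequitur. Irreducibility is genuinely needed here:
\begin{itemize}
\item Incompressibility of the annulus $Q$ does follow from that of $P$, since each $Q$ keeps a component of $\partial P$.
\item The step ``$\partial$-compressible $\Rightarrow$ $\partial$-parallel'' needs $\partial V$ to be incompressible in $\Compl V$. Otherwise $Q$ can $\partial$-compress, on the side not containing the knotted arc, to a separating essential disk of $\Compl V$. Then $Q$ is inessential even though $P$ is not $\partial$-parallel.
\end{itemize}

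\textbf{This actually happens.} Let $\Theta$ be a planar theta-curve with a local trefoil tied into two of its edges, one on each side of a sphere $S$ separating the two vertices, and let $V=N(\Theta)$.
\begin{itemize}
\item Contracting the third edge shows that $V$ is the boundary connected sum of two trefoil solid tori. Hence $V$ is reducible, and $\Compl V$ is a boundary connected sum of two trefoil exteriors.
\item $P=S\cap\Compl V$ is incompressible, since a compressing disk would yield a sphere meeting a cycle of $\Theta$ once.
\item $P$ is not $\partial$-parallel, since each side contains a trefoil group in its $\pi_1$. So $V$ is weakly $3$-decomposable.
\item The $\partial$-reducing disk of $\Compl V$ is unique up to isotopy, so every essential annulus can be isotoped into one trefoil exterior. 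The trefoil is prime, so $V$ is not $2$-decomposable.
\end{itemize}
Thus your irreducibility claim is false, and choosing $D$ in either $\tau$-side produces an inessential $Q$ with $P$ not $\partial$-parallel.

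A similar argument applied to the annulus of Hopf type also appears to exclude a Hopf $\rho$-summand. Its $\partial$-compression would be a non-separating reducing disk. It also cannot be essential, since after isotopy into one trefoil exterior its boundary curves, exactly one of which bounds a disk in $V$, would have to be parallel. If that holds, the example contradicts the lemma as stated.

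The paper's one-line $\tau$-case (``a $2$-decomposing surface, given that $P$ is not boundary-parallel'') has the same hole, and the statement seems to need $V$ irreducible. With that hypothesis added, your sketch goes through:
\begin{itemize}
\item $Q$ is incompressible, so if it is inessential it is $\partial$-parallel.
\item It cannot be parallel on the side containing both cycles of $\Gamma$.
\item Parallelism on the knotted-arc side makes $(B_i,G_i)$ trivial, i.e.\ $P$ is $\partial$-parallel.
\end{itemize}
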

\begin{proof}
Let $P\subset \Compl V$ be a weakly $3$-decomposing surface that is not $3$-decomposing. Then $P$ is $\partial$-compressible. Let $D$ be a $\partial$-compressing disk of $P$ and $W$ the component cut off by $P$ from $\Compl V$ with $D\subset W$. 

Suppose $W$ is a $\tau$-tangle exterior. If $D\cap P\subset P$ is a non-separating essential arc, then the frontier of a regular neighborhood of $D\cup P$ is a $2$-decomposing surface, given that $P$ is not boundary-parallel. Similarly, if $D\cap P\subset P$ is a separating essential arc, then the frontier of a regular neighborhood of $D\cup P$
consists of two meridional annuli, at least one of which is $2$-decomposing by $P$ being not $\partial$-parallel.

Suppose $W$ is the exterior of a $\rho$-tangle $(B,G)$, and $t_1,t_\rho$ are the two components in $G$. 
If $D\cap P\subset P$ is a non-separating essential arc, then the frontier of a regular neighborhood of $D\cap P$ is a compressing disk of $P$, a contradiction. If $D\cap P\subset P$ is a separating essential arc, and $P_1\subset P$ is an annulus cut off by $D\cap P$, then one component of the frontier of a regular neighborhood of $D\cup P_1$ is an annulus $A$ with one boundary component meridional and the other a longitude in the exterior of $t_\rho$. In particular, $A$ meets a meridian disk $E$ in $\rnbhd{t_\rho}$ disjoint from $P$ at one point. The frontier of a regular neighborhood of $A\cup E$ is a $3$-decomposing surface $P'$, and the induced $3$-decomposing sphere cuts off a Hopf $\rho$-tangle from $(B,G)$.
\end{proof}

%% file: rectangles.tex
\begin{figure}[b]
\centering
\begin{subfigure}{.33\linewidth}
\centering
\begin{overpic}[scale=.13,percent]{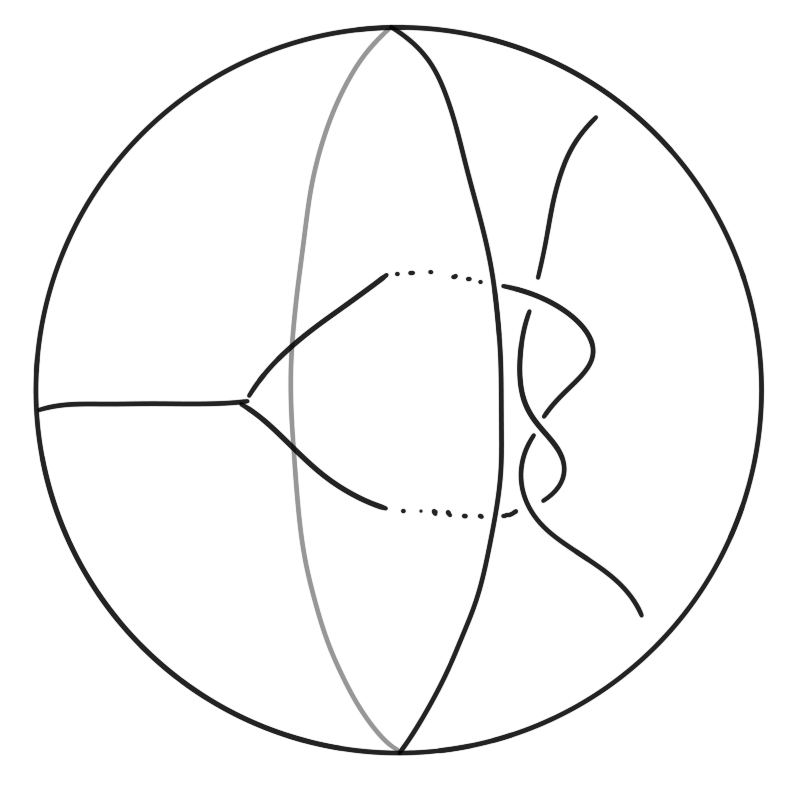}
\put(46,20){$D$}
\put(15,52){$B_L$}
\put(80,50){$B_R$}
\end{overpic}
\caption{Canonical decomposition.}
\label{fig:canonical_decomposition}
\end{subfigure}
\begin{subfigure}{.33\linewidth}
\centering
\begin{overpic}[scale=.13,percent]{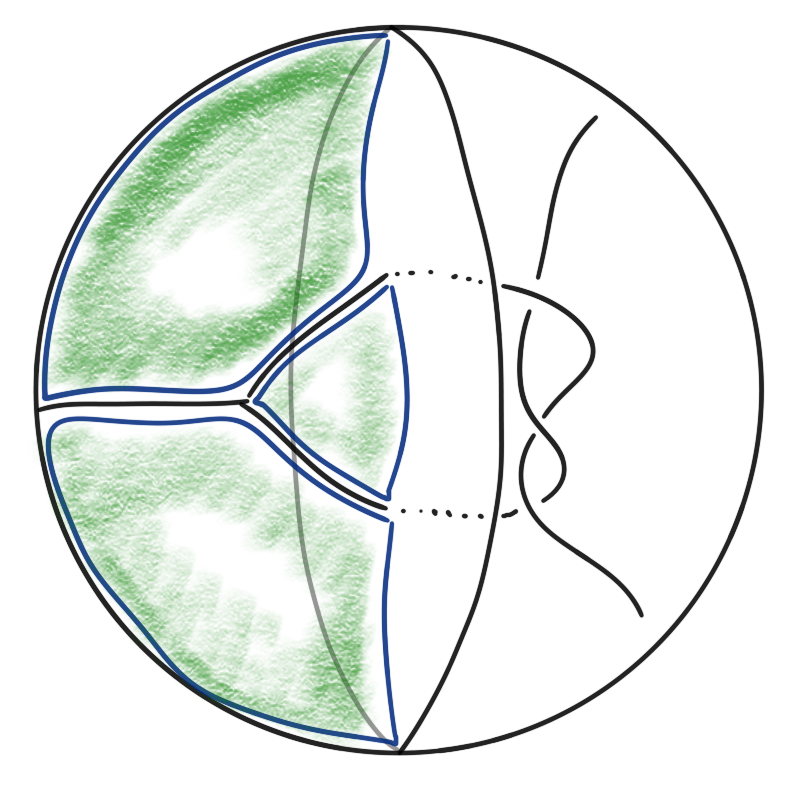}
\put(24,70){$d_+$}
\put(25,25){$d_-$}
\put(40,48){$d_0$}
\end{overpic}
\caption{Disks $d_+,d_-,d_0\subset D_v$.}
\label{fig:D_v}
\end{subfigure}
\begin{subfigure}{.32\linewidth}
\centering
\begin{overpic}[scale=.13,percent]{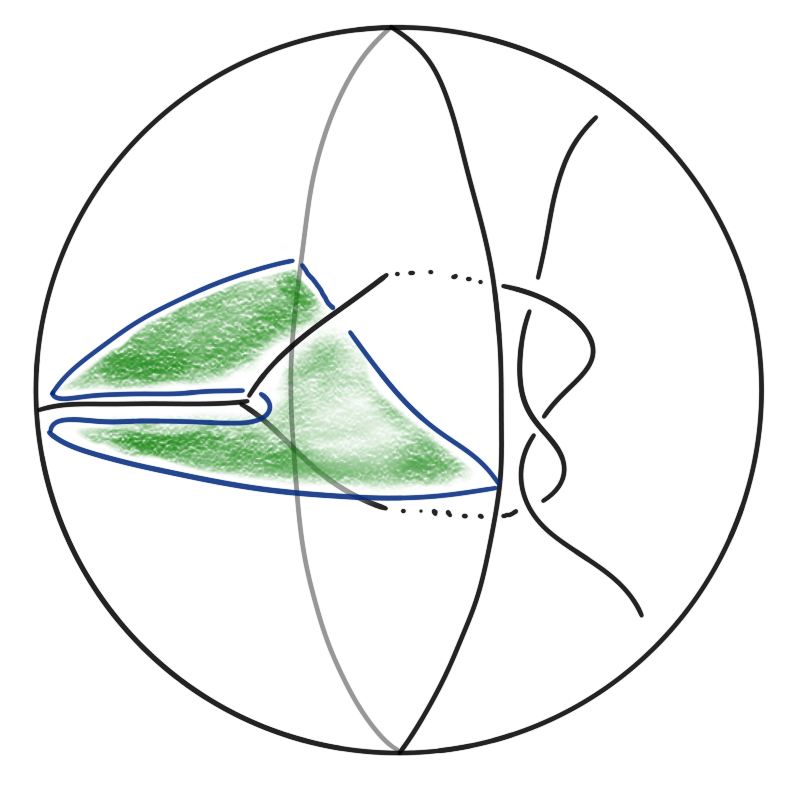}
\put(40,44){$d_h$} 
\end{overpic}
\caption{Disk $d_h\subset D_h$.}
\label{fig:D_h}
\end{subfigure}
\caption{Standard disks.}
\end{figure}

Henceforth, $(B,G)$ denotes a $\tau$- or $\rho$-tangle. 
\subsection{Decomposition}
Let $B_R,B_L\subset B$ be two $3$-balls such that $B_R\cup B_L= B$ and $B_R\cap B_L$ is a disk $D$, and define $G_{R/L}:=B_{R/L}\cap G$. If $(B_L,G_L)$ is a trivial $\tau$-tangle and $(B_R,G_R)$ is a $2$-string tangle, then $(B_L,G_L)\cup_D (B_R,G_R)$ is called a \emph{canonical decomposition} of $(B,G)$; see Fig.\ \ref{fig:canonical_decomposition}. Two canonical decompositions: 
\[
(B_L,G_L)\cup_D (B_R,G_R)\quad \text{and}\quad  (B_L',G_L')\cup_{D'} (B_R',G_R')\]
are \emph{equivalent} if there is an orientation-preserving homeomorphism of $(B,G)$ carrying $B_{R/L}$ to $B_{R/L}'$. It is not difficult to see that a $\tau$-tangle has at most three canonical decompositions, while a $\rho$-tangle has only one, up to equivalence. We denote by 
$t_1,t_2$ the two strings in $G_R$, and by $t_1,t_\rho$ the two components of $G$ if $(B,G)$ is a $\rho$-tangle.
Hereinafter $(B_L,G_L)\cup_D (B_R,G_R)$ denotes a canonical decomposition of $(B.G)$.
\subsection{Standard disks in $B_L$}\label{subsec:standard_disks}
Consider a proper disk $D_v \subset B_L$ containing $G_L$ and intersecting $D$ at an arc. Then $D_v\cap \Compl{G_L}$ consists of three disks $d_0,d_+,d_-$; see Fig.\ \ref{fig:D_v}. 
Consider also a proper disk $D_h\subset B_L$ intersecting $D_v$ 
at the edge of $G_L$ disjoint from $D$.
Then $D_h \cap \Compl{G_L}$ is a disk $d_h$; see Fig.\ \ref{fig:D_h}

Given a good rectangle $R\subset \Compl G$, it may be assumed that the intersection $R\cap B_L$ is a disjoint union of copies $d_0,d_+,d_-$ or $d_h$, and 
$R\cap B_R$ is a disk in the exterior $\Compl{t_1\cup t_2}$ of the $2$-string tangle $(B_R,G_R)$.

\subsection{Rectangles in $(B_R,G_R)$}
Let $P':=P\cap \partial B_R$, and observe that 
$P'\subset P$ is a deformation retract.   
Denote by $F'$ the closure of $\partial \Compl{t_1\cup t_2}-P'$. 
If $R$ is a (resp.\ good) rectangle of $(B,G)$, then $Q:=R\cap B_R$ meets $P',F'$ each in two (resp.\ essential) arcs. Conversely, if $Q$ is a disk intersecting $P',F'$ each in two (resp.\ essential) arcs, then one can extend $Q$ to a (resp.\ good) rectangle $R$ in $\Compl G$ by some disjoint copies of $d_+,d_-,d_0,d_h$. 
This motivates the following definition. 
\begin{definition}
A (resp.\ good) rectangle of $(B_R,G_R)$ is a disk $Q\subset\Compl{t_1\cup t_2}$ such that
$Q$ meets $P',F'$ each in two (resp.\ essential) arcs.
\end{definition}

\begin{figure}[b]
\centering
\begin{subfigure}{.45\linewidth}
\centering
\begin{overpic}[scale=.17,percent]{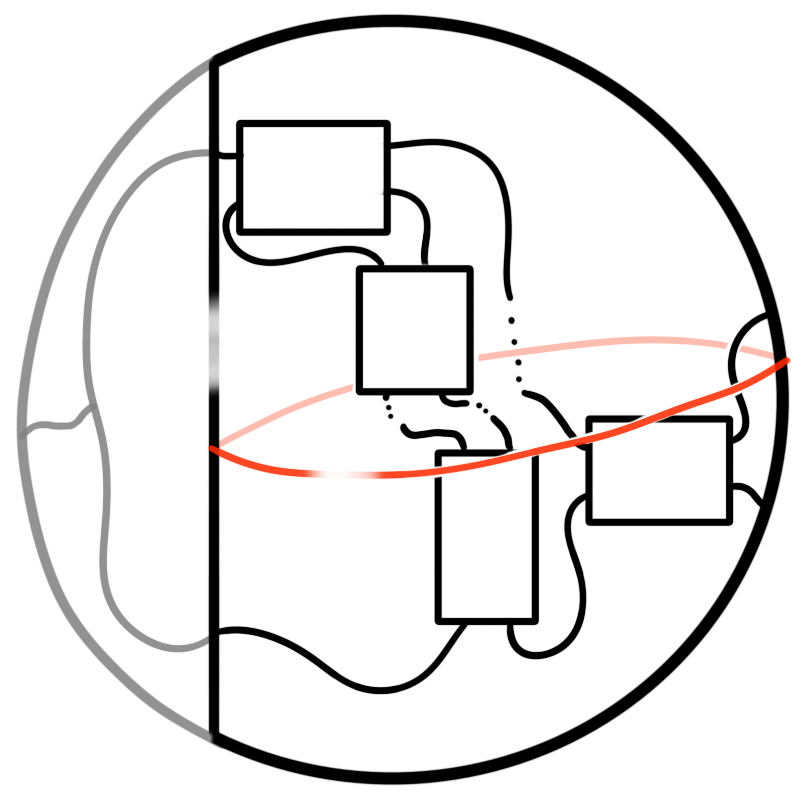}
\put(36,79){$a_1$}
\put(49,59){$a_2$}
\put(57,30){$a_n$}
\put(75,40){$a_{n+1}$}
\put(25,53){$m_R$}
\put(40,37){$l_R$}
\end{overpic}
\caption{Even $n$.}
\label{fig:even}
\end{subfigure}
\begin{subfigure}{.45\linewidth}
\centering
\begin{overpic}[scale=.17,percent]{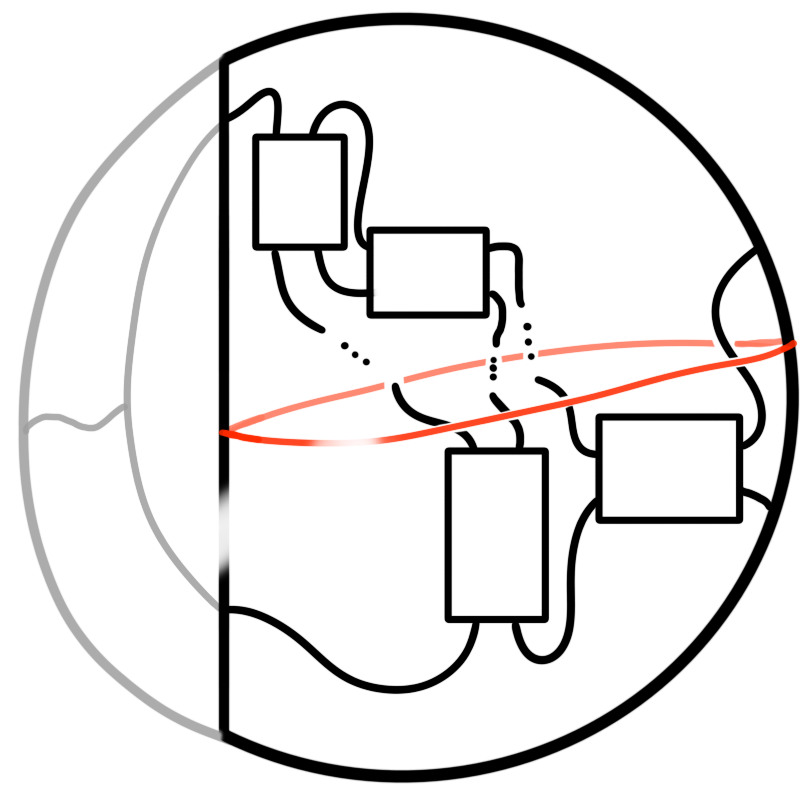}
\put(34,75){$a_1$}
\put(50,65){$a_2$}
\put(59,32){$a_n$}
\put(77,40){$a_{n+1}$}
\put(27,32){$m_R$}
\put(42,42){$l_R$}
\end{overpic}
\caption{Odd $n$.}
\label{fig:odd} 
\end{subfigure} 
\begin{subfigure}[t]{.45\linewidth}
\centering
\begin{overpic}[scale=.13,percent]{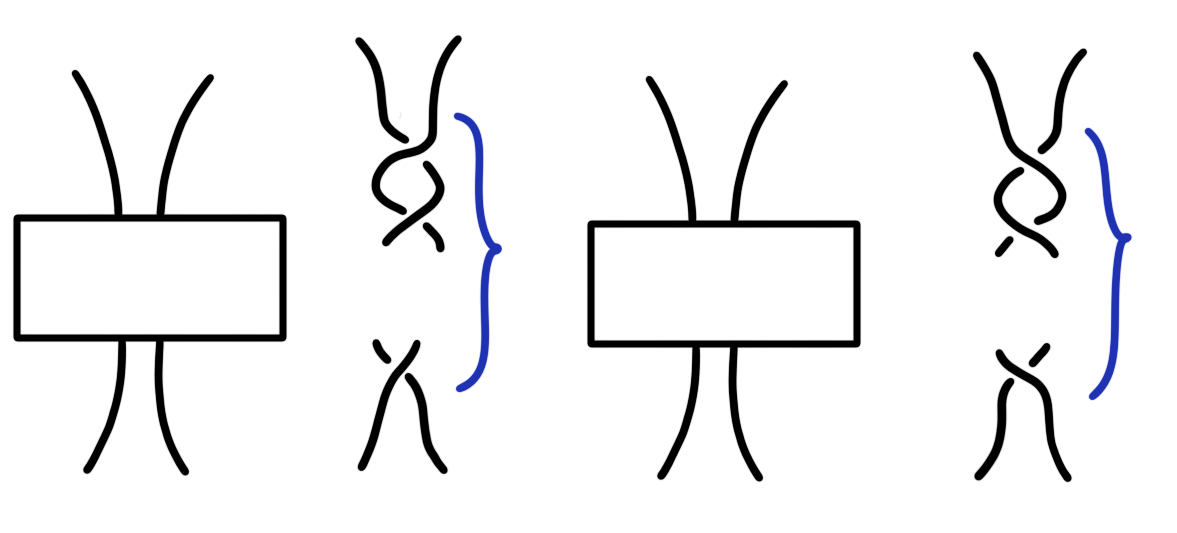}
\put(5,19){$a>0$}
\put(42.4,20){$a$}
\put(52.5,19){$a<0$}
\put(94,17){$-a$}
\end{overpic}
\caption{Vertical crossing.}
\label{fig:vertical}
\end{subfigure}
\quad
\begin{subfigure}[t]{.45\linewidth}
\centering
\begin{overpic}[scale=.13,percent]{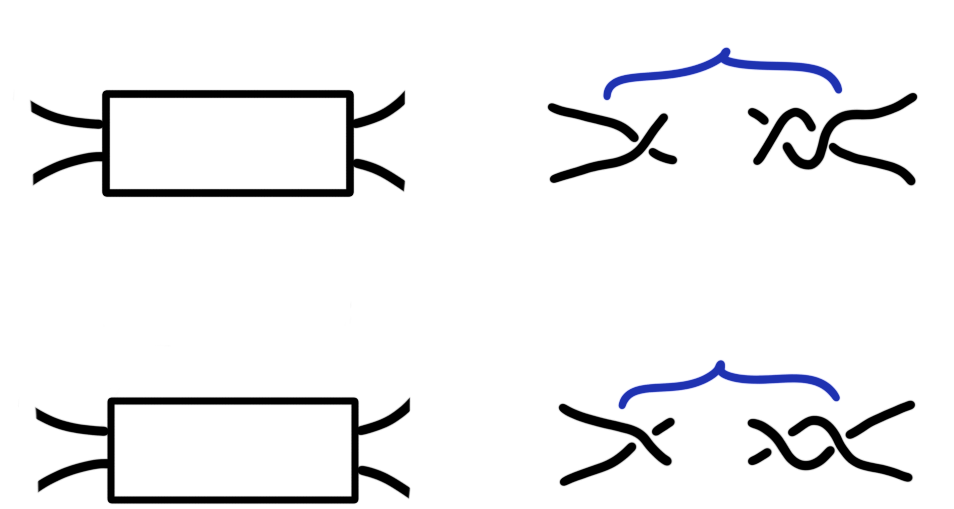}
\put(16,36){$a>0$}
\put(16,4){$a<0$}
\put(71,49){$a$}
\put(70,16){$-a$}
\end{overpic}
\caption{Horizontal crossing.}
\label{fig:horizontal}
\end{subfigure}      
\caption{Rational $3$-tangles.}
\label{fig:rational_three_tangle}
\end{figure}

\subsection{Rational $3$-tangles} 
We say $(B,G)$ is \emph{rational} if it 
can be decomposed into a trivial $\tau$-tangle
$(B_L,G_L)$ and a rational tangle $(B_R,G_R)$; see, for instance, Fig.\ \ref{fig:canonical_decomposition}; note that such a decomposition, if exists, is unique and is called the \emph{rational} decomposition of $(B,G)$.
Choose a loop $l_R\subset \partial B_R$ intersecting $m_R:=\partial D$ twice so that $l_R\cup m_R$ separates the four points in $\partial (t_1\cup t_2)$. Then $m_R,l_R$ are called a \emph{meridian} and a \emph{longitude} of $(B_R,G_R)$, respectively; see Fig.\ \ref{fig:rational_three_tangle}. In terms of $m_R, l_R$, the rational tangle $(B_R,G_R)$ can be identified with the tangle $R(a_1,a_2,\dots,a_n,a_{n+1})$ in Figs.\ \ref{fig:even} or \ref{fig:odd}, for some $a_1,\dots,a_n\in\mathbb{Z}$; see Figs.\ \ref{fig:vertical}, \ref{fig:horizontal} for the sign convention. Since the choice of $l_R$ is unique, only up to some Dehn twists along $m_R$. 
the \emph{slope} $r=[a_1,\dots,a_{n+1}]$ of $(B_R,G_R)$ is well-defined, only up to modulo $\mathbb{Z}$.
In particular, $l_R$ can be chosen so that the slope $r$ satisfies $-\frac{1}{2}< r\leq\frac{1}{2}$, and by Conway's rational tangle theory; see \cite{KauLam:04}, a rational tangle $(B,G)$ is classified by its slope modulo $\mathbb Z$. In the case $r=\frac{1}{2}$, $(B,G)$ is a Hopf $\rho$-tangle; see Fig.\ \ref{fig:hopf_rho}. By Lemma \ref{lm:atoroidal_essentiality}, every rational tangle with a slope $r\not\equiv \frac{1}{2}$, modulo $\mathbb{Z}$ is essential.

It may be assumed that $l_R$ meets $D_v$ at a point and $D_h$ at an arc, where $D_h,D_v$ are defined in Section \ref{subsec:standard_disks}.
The \emph{special end} of a rational tangle $(B,G)$ is the point of $\partial G$ in $B_L$ in its rational decomposition $(B_L,G_L)\cup_D (B_R,G_R)$.

\subsection{Torus $\rho$-tangle}\label{subsec:torus_rho_tangle}
Given a $\rho$-tangle $(B,G)$, then $(B,G)$ is a torus $\rho$-tangle if the following is satisfied:
\begin{enumerate}[label=(\roman*)]
    \item\label{itm:trivial} $(B,t_\rho)$ is trivial, and 
    \item\label{itm:parallel} $t_1$ is isotopic, relative to $\partial t_1$, to an arc $\beta\subset \partial \Compl{t_\rho}$ in $\Compl{t_\rho}$ such that 
    $\vert\beta\cap \partial F\vert =2$.
\end{enumerate}
Condition \ref{itm:parallel} implies that there is, up to isotopy, a unique arc $\alpha$ in $\partial B_R-D$ joining $\partial t_1$ and disjoint from $\beta$. 
Let $m,l\subset \partial\Compl {t_\rho}$ be the meridian and preferred longitude, with respect to $t_\rho$. Then $(B,G)$ is called a $(p,q)$-torus $\rho$-tangle if $\alpha\cup \beta$ is a $(p,q)$-curve with respect to $m,l$; see Fig.\ \ref{fig:torus_rho}.
A rational $\rho$-tangle with a slope of $\frac{1}{2k}$ is a $(k,\pm 1)$-torus $\rho$-tangle. Also, a $(p,q)$-torus $\rho$-tangle and $(-p,-q)$-torus $\rho$-tangle are equivalent. We remark that a torus $\rho$-tangle is one that has a canonical decomposition $(B_L,G_L)\cup_D (B_R,G_R)$ with $(B_R,G_R)$ a torus tangle in the sense of \cite[Definition (2)]{Wu:96}.

\begin{figure}[h]
\begin{subfigure}{.45\linewidth}
\centering
\begin{overpic}[scale=.17,percent]{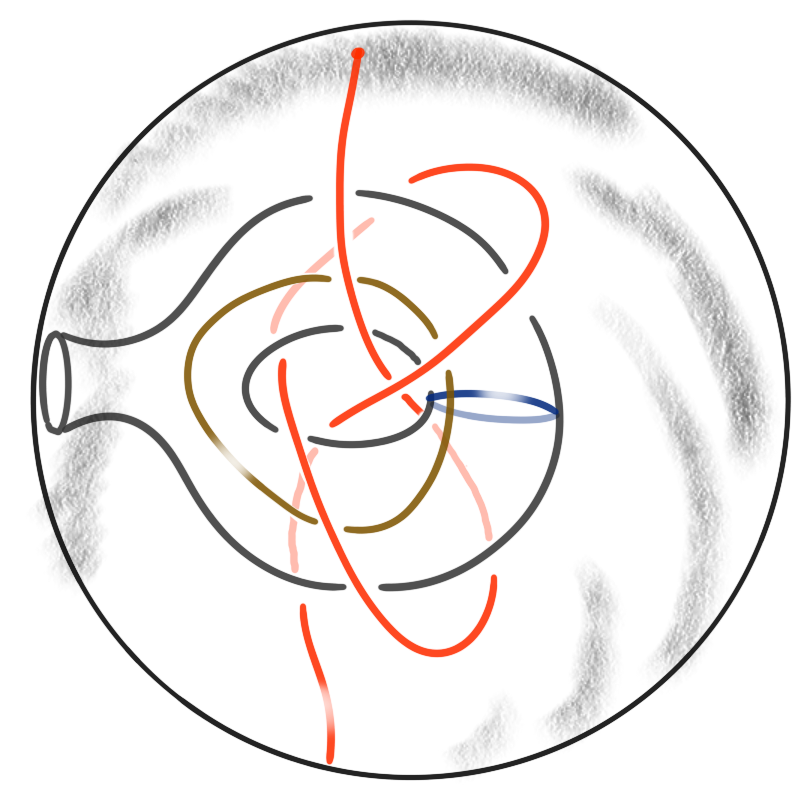}
\put(60,50){\footnotesize $m$}
\put(28,40){\footnotesize $l$}
\put(40,10){\footnotesize $t_1$} 
\end{overpic}
\caption{$(3,2)$-torus $\rho$-tangle.}
\label{fig:torus_rho} 
\end{subfigure}
\begin{subfigure}{.45\linewidth}
\centering
\begin{overpic}[scale=.17,percent]{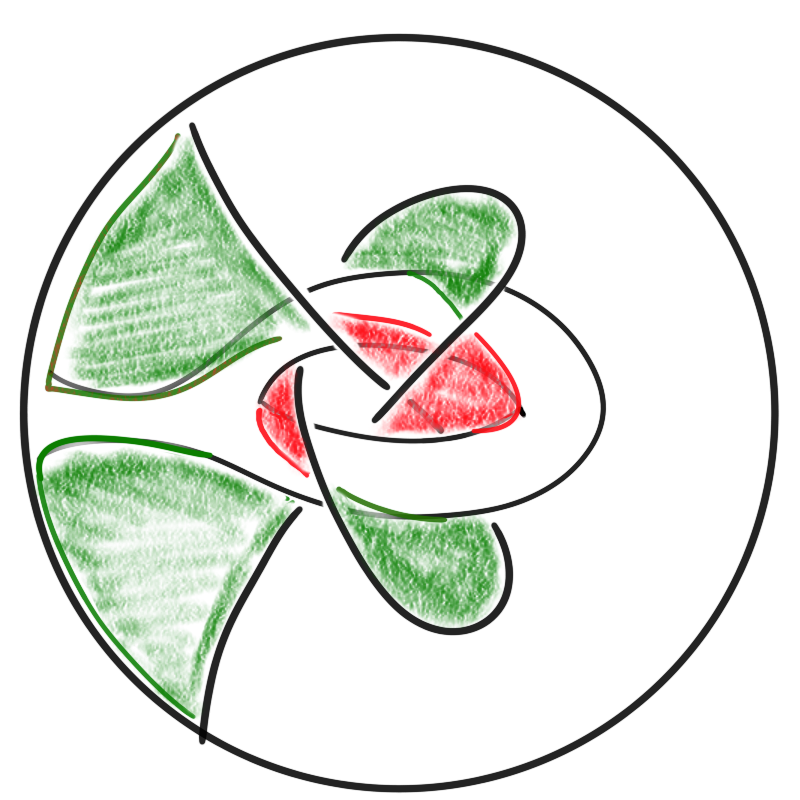}
\put(18,27){$R'$}
\put(64,75){$t_1$}
\end{overpic}
\caption{Type I rectangle.}
\label{fig:rho_typeI} 
\end{subfigure}
\caption{Type I rectangle in torus $\rho$-tangle exterior.}
\end{figure}

\subsection*{Rectangles in $3$-tangle exteriors}
Here we construct good rectangles of various types in the exterior of an essential $\tau$-or $\rho$-tangle $(B,G)$. 
Hereinafter, $(B_L,G_L)\cup_D (B_R,G_R)$ denotes a rational decomposition of $(B,G)$ or the canonical decomposition of a $\rho$-tangle. In the former, we choose a longitude so $(B_R,G_R)$ is identified with $R(a_1,a_2,\dots,a_n,0)$. Let $a,b$ be the two circles in $F'\cap \partial B$.

\subsection{Rectangles in $\tau$-tangle exteriors}\label{sub:rectangles_Y}
Suppose $(B,G)$ is a rational $\tau$-tangle.   
\subsubsection{Rectangles of type I}\label{subsec:tau_rectangle_typeI}
Let $Q'$ be a parallelism between $t_1,t_2$, and 
$Q:=\Compl {t_1\cup t_2}\cap Q'$. Then $Q$ meets $a,b$ each at a point, so $\vert \partial Q\cap \partial D\vert=2$ if and only if $Q$ is a rectangle of $(B_R,G_R)$. 

Consider the continued fraction $[a_n,a_{n-1},\dots,a_1]=\frac{p}{q}$, and observe that  
$2\vert q\vert=\vert \partial Q\cap \partial D \vert$. This implies that $Q$ is a rectangle if and only if $q=\pm 1$. In this case,  
the two arcs in $Q\cap D$ can be identified with $d_+\cap D$ and $d_-\cap D$. The induced good rectangle $R:=Q\cup d_+\cup d_-$ is called the rectangle of \emph{type I}. Since $q=\pm 1$, the slope, $[a_1,\dots,a_n,0]$ modulo $\mathbb
 Z$, of $(B,G)$ is $\frac{\pm (-1)^{n+1}}{p}$ by the Palindrome Lemma \cite{KauLam:04}; see Fig.\ \ref{fig:tau_typeI}, where $k=\pm p$.
\subsubsection{Rectangles of type II}\label{subsec:tau_rectangle_typeII} 
Let $Q'$ be a parallelism, disjoint from $t_2$ (resp.\ $t_1$), between $t_1$ (resp.\ $t_2$) and an arc in $\partial B_R$. Set $Q:=\Compl {t_1\cup t_2}\cap Q'$. Since $Q$ meets $a$ (resp.\ $b$) once, $Q$ is a rectangle of $(B_R,G_R)$ 
if and only if $\partial Q$ meets $\partial D$ at three points.
Consider the slope $[a_1,a_2,\dots,a_n,0]=\frac{p}{q}$ of $(B,G)$, and observe that $\vert q\vert=\vert \partial Q \cap \partial D \vert$. In particular, $Q$ is a rectangle if and only if $q=\pm 3$. In this case, the two arcs in $\partial Q\cap D$ can be identified with $d_h\cap D$ and $d_+\cap D$ or $d_-\cap D$. The induced good rectangle $R:=Q\cup d_h\cup d_+$ or $R:=Q\cup d_h\cup d_-$ is called the rectangle of \emph{type II}; see Fig.\ \ref{fig:tau_typeII}. Since $q=\pm 3$, the $\tau$-tangle $(B,G)$ is rational with a slope of $\pm\frac{1}{3}$.

\begin{figure}[t]
\begin{subfigure}{.45\linewidth}
\centering
\begin{overpic}[scale=.13,percent]{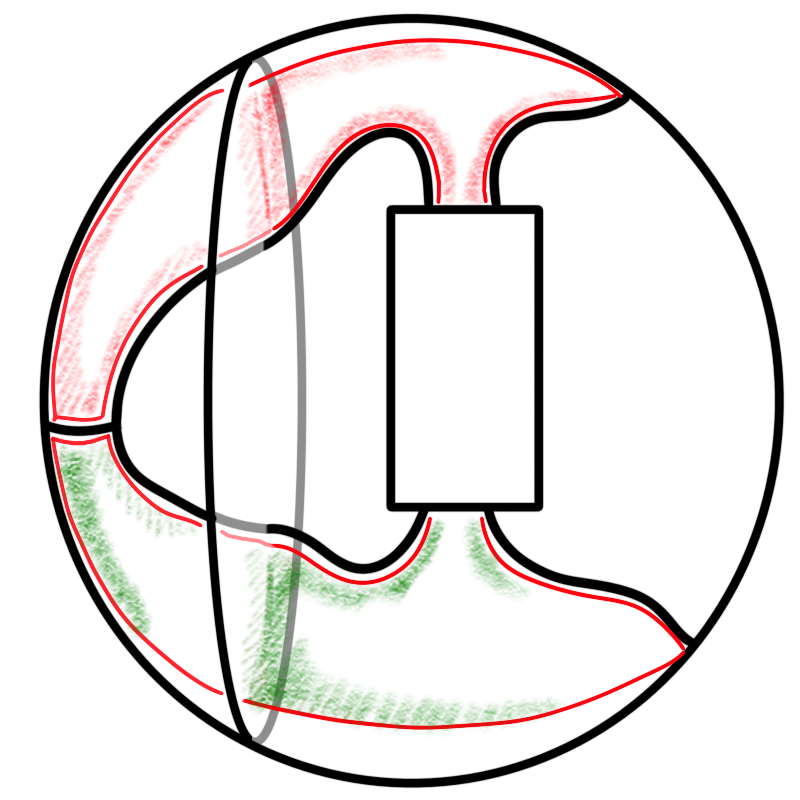}
\put(56,54){$k$}
\put(1,80){\small $B_L$}
\put(90,80){\small $B_R$}
\put(50,15){$R$}
\end{overpic}
\caption{Type I.}
\label{fig:tau_typeI}
\end{subfigure}
\begin{subfigure}{.45\linewidth}
\centering
\begin{overpic}[scale=.13,percent]{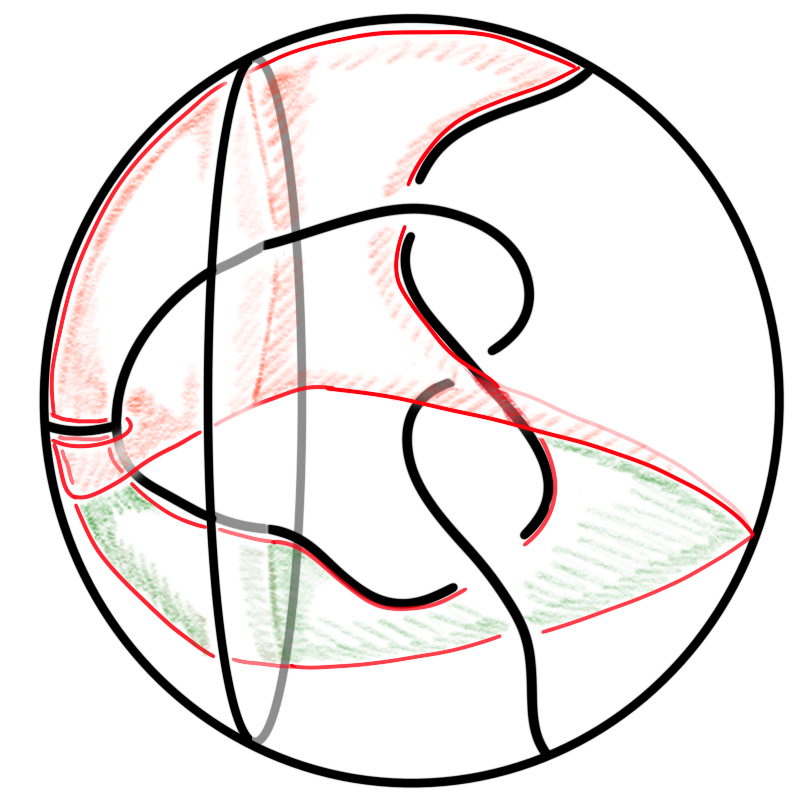}
\put(44,85){$R$}
\put(1,80){\small $B_L$}
\put(90,80){\small $B_R$}
\end{overpic}
\caption{Type II.}
\label{fig:tau_typeII}
\end{subfigure}
\caption{Rectangles in $\tau$-tangle exteriors.}
\end{figure}

\subsection{Rectangles in $\rho$-tangle exteriors}
Let $(B,G)$ be a $\rho$-tangle, and denote by $A_1,T_\rho$ the annulus and once-punctured torus in $F$, respectively. 
\subsubsection{Rectangles of types I and I*.}\label{subsec:rho_rectangle_typeI}
Let $(B,G)$ be a $(p,q)$-torus $\rho$-tangle.
Then $t_1$ is isotopic, relative to $\partial t_1$, to an arc $\beta\in \partial\Compl{t_\rho}$ which meets $F$ twice, so there is an embedded disk $R'\subset \Compl{t_\rho}$ 
such that $\partial R'=t_1\cup\beta$. In particular, 
$R:=R'\cap \Compl{t_1\cup t_\rho}$ is a good rectangle, and is called a \emph{rectangle of type I}; see Fig.\ \ref{fig:rho_typeI} for the case where $(p,q)=(3,2)$. 
Observe that the frontier $R^*$ of a regular neighborhood of $F_1\cup R$ is also a good rectangle of $(B,G)$, and is called a \emph{rectangle of type I*}. 

\begin{figure}[h]
\begin{subfigure}{.45\linewidth}
\centering
\begin{overpic}[scale=.14,percent]{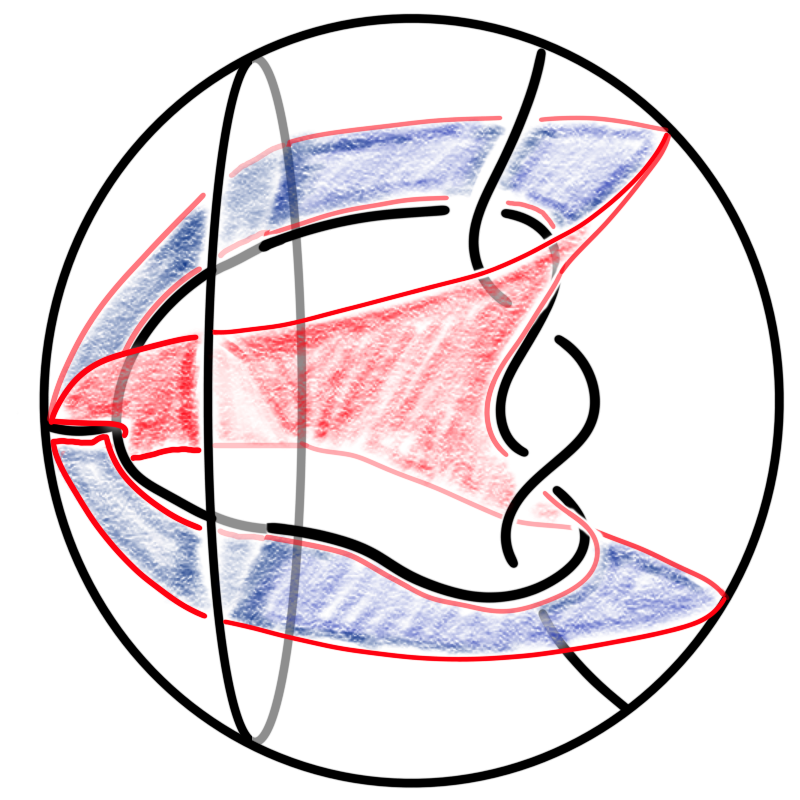}
\put(47,47){$R$}
\end{overpic}
\caption{Type II rectangle.}
\label{fig:rho_typeII}
\end{subfigure}
\begin{subfigure}{.45\linewidth}
\centering
\begin{overpic}[scale=.14,percent]{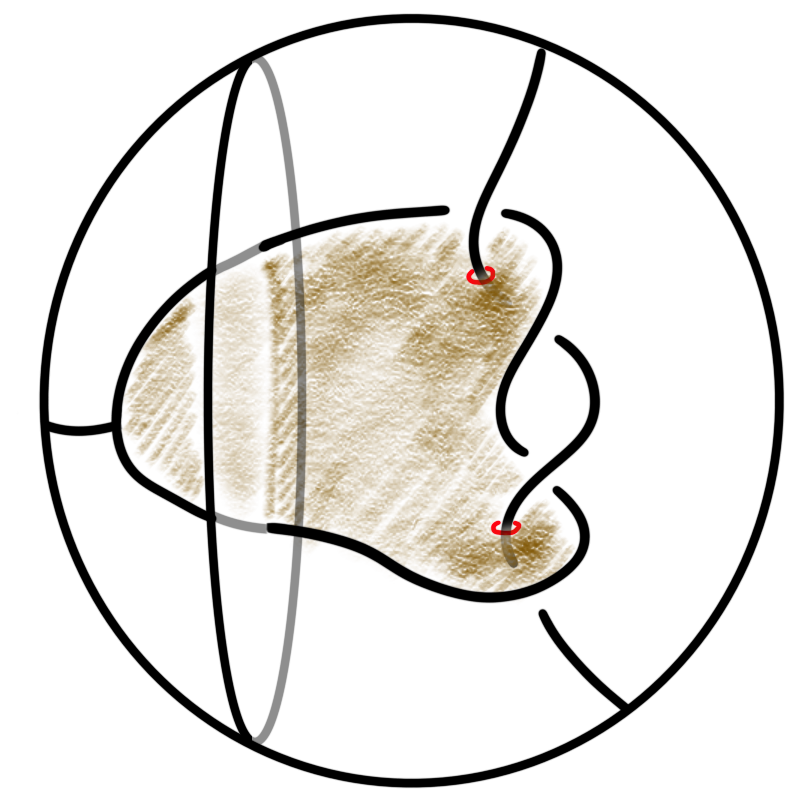}
\put(47,47){$O$}
\end{overpic}
\caption{Twisting disk $O$.}
\label{fig:twisting_disk}
\end{subfigure}
\caption{Type II rectangle in a $\rho$-tangle exterior.}
\end{figure} 
 
\subsubsection{Rectangles of types II}\label{subsec:rho_rectangle_typeII}
Observe that, if $(B,G)$ is a rational $\rho$-tangle,
and $Q'$ is a parallelism of $t_1$ to an arc in $\partial B_R$ with $Q'\cap t_2=\emptyset$. 
Let $Q:= Q'\cap \Compl{t_1\cup t_2}$. 
Since $Q$ is disjoint from $a,b$, the disk $Q$ is a rectangle of 
$(B_R,G_R)$ if and only if $\vert\partial Q\cap \partial D\vert=4$. On the other hand, the denominator of the continued fraction $\frac{p}{q}=[a_1,\dots,a_n,0]$ computes $\vert \partial Q\cap\partial D\vert$, up to sign, so $Q$ is a rectangle if and only if $q=\pm 4$. In this case, $Q$ 
meets $D$ at three arcs that can be identified with 
$(d_+\cup d_-\cup d_h)\cap D$. The induced rectangle
$R:=Q\cup d_+\cup d_-\cup d_h$ is good; see Fig.\ \ref{fig:rho_typeII}. Since $q=\pm 4$, the slope of $(B,G)$ is $\pm\frac{1}{4}$, and hence it is a $(\pm 2,1)$-torus $\rho$-tangle.

Let $O$ be an essential disk in $\Compl{t_\rho}$ with $\partial O\subset T_\rho$, and let $\mathbf t:\Compl{t_\rho}\rightarrow \Compl{t_\rho}$ be the Dehn twist along $O$; see Fig.\ \ref{fig:twisting_disk}. Define $G_r:=\mathbf t^r(t_1)\cup (G-t_1)$. Then $(B,G_r)$ is a $(\pm 2,\pm 2r+1)$-torus $\rho$-tangle, and the good rectangle $R_r:=\mathbf t^r(R)$ is called a \emph{rectangle of type II}, $r\in\mathbb Z$.

%% file: classification.rec.tex
Recall that the frontier of a regular neighborhood of $G$ 
is denoted by $F$ and $P$ is the closure of $\partial\Compl G-F$. 
The three circles in $F\cap P$ are denoted by $a,b,c$.

\begin{theorem}\label{teo:rectangle_tau_tangle}
Suppose the exterior of an atoroidal $\tau$-tangle $(B,G)$ contains a good rectangle $R$.
Then $(B,G)$ is rational and either 
\begin{itemize}
\item it has a slope of $\pm 1/k$ with $k$ an odd integer, and $R$ is of type I, or 
\item it has a slope of $\pm 1/3$, and $R$ is of type II.
\end{itemize}
\end{theorem}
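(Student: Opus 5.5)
The plan is to use the good rectangle $R$ itself to produce a canonical decomposition in which $(B_R,G_R)$ is a rational tangle. After that, the computations of Section \ref{sub:rectangles_Y} identify the slope and the type.

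\textbf{Step 1: the two arcs of $R\cap F$.} Since $F$ is a pair of pants, an essential arc in $F$ either joins two distinct circles among $a,b,c$ or is a separating arc returning to the same circle. Likewise, $R\cap P$ consists of two essential arcs of the pair of pants $P$. Going around $\partial R$, the four arcs alternate between $F$ and $P$, so the pattern of endpoints on $a,b,c$ is a cyclic word of length four. I would list the possible words up to symmetry. Essentiality rules out the degenerate patterns: for example, if $R\cap P$ had an inessential arc, it would cut off a disk, and surgery on $R$ would give a $\partial$-compressing disk contradicting Lemma \ref{lm:atoroidal_essentiality}. I expect two patterns to survive:
\begin{itemize}
\item[(A)] both $F$-arcs run between two circles, say $a$ and $b$, each once;
\item[(B)] one $F$-arc runs from $c$ to $c$ around the edge $t_3$, and the other joins $a$ and $b$.
\end{itemize}
There may also be a third pattern with an arc $a$–$c$ and an arc $b$–$c$; I would show it is isotopic to (A) or (B) after relabelling the edges.

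\textbf{Step 2: a trivial $\tau$-tangle $B_L$.} Take the disk $R\cup(\text{bands in }N(G))$. Concretely, I would thicken $R$ together with the part of $N(G)$ near the vertex and the arcs of $R\cap F$. The frontier of this thickening should contain a disk $D$ that cuts off a ball $B_L$ containing the vertex, such that $(B_L,G_L)$ is a trivial $\tau$-tangle. By construction, $R\cap B_R$ is then a parallelism disk in $(B_R,G_R)$:
\begin{itemize}
\item in case (A), between the strings $t_1$ and $t_2$;
\item in case (B), between $t_1$ (or $t_2$) and an arc of $\partial B_R$, disjoint from the other string.
\end{itemize}
It follows that at least one string of the 2-string tangle $(B_R,G_R)$ is unknotted, or that two strings are parallel.

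\textbf{Step 3: $(B_R,G_R)$ is rational.} This is where atoroidality is needed. In case (A), the parallelism shows that $t_1\cup t_2$ is parallel to a band. The exterior of $(B_R,G_R)$ is then a handlebody, so the tangle is rational. In case (B), $t_1$ is $\partial$-parallel in $B_R$, and the other string $t_2$ lies in the complementary ball $B_R-N(Q')$. If $t_2$ were locally knotted there, the boundary of a regular neighbourhood of $t_2$ together with the parallelism ball would give an essential torus, which is impossible. Hence $t_2$ is trivial, and again $(B_R,G_R)$ is rational.

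\textbf{Step 4: the slope and the type.} Once the tangle is rational, $R\cap B_R$ is exactly the disk $Q$ of type I or type II in Section \ref{sub:rectangles_Y}, up to isotopy, because parallelism disks in a rational tangle are unique up to isotopy. Since $R$ is a rectangle, the counts computed there apply.
\begin{itemize}
\item Type I requires $|q|=1$ for $[a_n,\dots,a_1]$. By the Palindrome Lemma the slope is then $\pm1/p$, and $p$ must be odd. The reason is that if $p$ were even, the endpoint pattern would make $t_1,t_2$ both end on the same side, which is the $\rho$ case rather than a $\tau$-tangle.
\item Type II requires $|q|=3$, which gives slope $\pm1/3$.
\end{itemize}

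\textbf{Main obstacle.} I expect the hardest part to be Step 2: showing that the canonical decomposition can be chosen so that $R$ meets $B_L$ only in standard copies of $d_0,d_\pm,d_h$. My first attempt would be an innermost and outermost arc argument on $R\cap D$ for an arbitrary canonical $D$, minimizing $|R\cap D|$, and then isotoping $D$.
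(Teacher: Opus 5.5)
Your architecture matches the paper's. You sort $R$ by how $\partial R$ meets $a,b,c$, put a distinguished circle on $\partial B_L$ of a canonical decomposition, read $R\cap B_R$ as a parallelism, get rationality from atoroidality, and finish with the counts of Section~\ref{sub:rectangles_Y}. The gap is Step~1, which carries the real content of this theorem: your list of surviving patterns is wrong.

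Trace $\partial R$ for the type I rectangle $Q\cup d_+\cup d_-$. Then $R\cap F$ is an $a$--$c$ arc and a $b$--$c$ arc: they run along $t_1$, resp.\ $t_2$, through the vertex and out along the third edge. Likewise $R\cap P$ is a $c$--$a$ arc and a $c$--$b$ arc. For type II, $Q\cup d_h\cup d_{\pm}$, you get an $a$--$c$ arc and a $c$--$c$ arc, both in $F$ and in $P$. Measured against this:
\begin{itemize}
\item Your pattern (B) cannot exist. An essential $c$--$c$ arc in the pair of pants $F$ separates $a$ from $b$, so no disjoint $a$--$b$ arc is possible. (An arc merely going around the third edge would be inessential.)
\item Your pattern (A), both $F$-arcs joining $a$ and $b$, forces both $P$-arcs to join $a$ and $b$ as well, since disjoint essential $a$--$a$ and $b$--$b$ arcs cannot coexist in $P$. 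This is exactly a configuration that has to be ruled out.
\item The ``third pattern'' you plan to reduce to (A) or (B) is the type I pattern itself, and the type II pattern does not appear in your list.
\end{itemize}

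The exclusion arguments are also missing. Since $R$ is good, all four arcs of $\partial R$ are already essential, so surgery along an inessential arc of $R\cap P$ never applies. The impossible configurations need genuine arguments, as in the paper:
\begin{itemize}
\item If both arcs of $R\cap P$ join the same two circles, say $c$ and $a$, attach a $2$-handle to $\Compl G$ along $b$. The result is the exterior of the arc formed by the edges ending at $a$ and $c$, which is a knot exterior. In it, $R$ is a compressing disk whose boundary meets the meridian $a$ twice in the same direction. This is impossible, because a compressing disk of a knot exterior in $S^3$ has longitudinal boundary.
\item If both arcs of $R\cap P$ join $c$ to itself, then $R$ separates $\Compl G$ with $a$ and $b$ on the same side, which forces $R\cap F$ to be inessential.
\end{itemize}
Only after these are eliminated are you left with the two genuine cases. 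In each, the circle met by both arcs of $R\cap P$ must be the special end, i.e.\ its edge is the one that goes into $B_L$; your Step~2 never specifies this choice.

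In Step~3, for the $t_1$--$t_2$ parallelism, the exterior of $(B_R,G_R)$ is not automatically a handlebody. It fails to be one when the band between $t_1$ and $t_2$ is knotted. You need the same atoroidality argument you give in the other case: otherwise the frontier of the band's neighbourhood yields an essential torus.

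Step~4, including the parity remark, is fine once the correct cases are in place.
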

\begin{proof}
Denote by $\alpha,\beta$ the two arcs in $R\cap P$, and without loss of generality, there are four possibilities of how $\alpha\cup\beta$ is embedded in $P$:
 
\begin{enumerate}[label=\textrm{(\roman*)}]
    \item\label{itm:y_parallel_arcs} $\alpha,\beta$ each joins $c$ and $a$.
    \item\label{itm:y_non_parallel_arcs} $\alpha$ joins $c$ and $a$, and $\beta$ joins $c$ and $b$. 
    \item\label{itm:y_arc_loop} $\alpha$ join $c$ and $a$, and $\beta$ joins $c$ to itself.
    \item\label{itm:y_parallel_loops} $\alpha,\beta$ each joins $c$ to itself.      
\end{enumerate}
Case \ref{itm:y_parallel_arcs}: Consider the $3$-manifold
$X:=\Compl G\cup_b h_2$ obtained by attaching a $2$-handle $h_2$ to $\Compl G$ along $b$. Then $X$ is a knot exterior, and $R$ is an essential disk with non-integral slope, an impossibility. 

Case \ref{itm:y_non_parallel_arcs}: Consider a canonical decomposition $(B_L,G_L)\cup_D (B_R,G_R)$ with $c\subset \partial B_L$. Then $Q:=R\cap B_R$ induces a parallelism between $t_1,t_2$; since $(B,G)$ is atoroidal, $(B_R,G_R)$ is rational. Since $Q$ is a rectangle in $\Compl {G_R}$, by Section \ref{subsec:tau_rectangle_typeI}, $(B,G)$ has a slope of $\frac{1}{k}$ with $k$ odd, and $R$ is of type I.

Case \ref{itm:y_arc_loop}: Consider a canonical decomposition $(B_L,G_L)\cup_D (B_R,G_R)$ with $c\subset \partial B_L$. 
Then $Q:=R\cap B_R$ induces a parallelism of $t_1$, disjoint from $t_2$, to an arc in $\partial B_R$. By the atoroidality, $(B_R,G_R)$ is rational. 
Since $Q$ is a rectangle in $\Compl{G_R}$, by Section \ref{subsec:tau_rectangle_typeII}, the slope of $(B,G)$ is $\pm\frac{1}{3}$, and $R$ is of type II.

Case \ref{itm:y_parallel_loops}: the disk $R$ is separating with $a,b$ in the same component in $\Compl G-R$. This implies 
$R\cap F$ is inessential in $F$, contradicting $R$ is good. 
\end{proof}

If $(B,G)$ is a $\rho$-tangle, we assume $c$ is the circle dual to $\partial t_\rho$.
\begin{theorem}\label{teo:rectangle_rho_tangle}
Suppose the exterior of an atoroidal $\rho$-tangle $(B,G)$ contains a good rectangle $R$.
Then either 
\begin{itemize}
    \item $(B,G)$ is a torus $\rho$-tangle, and $R$ is of type I or I*, or 
    \item $(B,G)$ is a $(-2,\pm 2k+1)$-torus $\rho$-tangle, and $R$ is of type II.
\end{itemize}
\end{theorem}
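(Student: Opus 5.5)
I will follow the strategy of the proof of Theorem \ref{teo:rectangle_tau_tangle}: split into cases according to how the two arcs $\alpha,\beta$ of $R\cap P$ sit in the pair of pants $P$. In each case I will either reach a contradiction or recognize $R$ as one of the rectangles built in Sections \ref{subsec:rho_rectangle_typeI} and \ref{subsec:rho_rectangle_typeII}. Write $a,b$ for the boundary circles of $P$ lying on the annulus $A_1=\partial N(t_1)\cap \Compl G$, and $c$ for the circle on $T_\rho$. Every essential arc of $R\cap F$ lies either in $A_1$, joining $a$ to $b$, or in $T_\rho$, with both endpoints on $c$. Since $\partial R$ alternates between arcs of $P$ and arcs of $F$, the possible patterns are:
\begin{enumerate}[label=(\alph*)]
\item both $F$-arcs lie in $T_\rho$;
\item one $F$-arc lies in $A_1$ and the other in $T_\rho$;
\item both $F$-arcs lie in $A_1$.
\end{enumerate}
Each pattern then splits further according to which of $a,b,c$ the $P$-arcs join.

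\textbf{Pattern (c).} Here $\partial R$ meets only $A_1\cup P$. Capping $\partial B$ and attaching the meridian disk of $t_1$, I expect $R$ to give a disk in $\Compl{t_\rho}$ whose boundary meets the meridian of $t_\rho$ trivially, or else an essential arc of $P$ with both ends on $a\cup b$ that is disjoint from $c$. The aim is either a compressing or $\partial$-compressing disk for $P$, which would contradict essentiality (nontriviality, via Lemma \ref{lm:atoroidal_essentiality}), or the conclusion that $R\cap F$ is inessential in $A_1$. So this pattern should be ruled out, in the same way as Cases (i) and (iv) of Theorem \ref{teo:rectangle_tau_tangle}. For the subcases where the arcs are forced to be parallel, I will use the $2$-handle trick from Case (i): attach a $2$-handle along $a$ or $c$ to get a knot exterior in which $R$ becomes a disk of non-integral slope.

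\textbf{Pattern (b).} Let $\gamma=R\cap A_1$. Then $R\cup_\gamma$ (half of $N(t_1)$) is a disk $R'$ in $\Compl{t_\rho}$ whose boundary is $t_1\cup\beta$, where $\beta\subset\partial\Compl{t_\rho}$ meets $F$ twice. This is exactly condition (ii) in the definition of a torus $\rho$-tangle. For condition (i), that $(B,t_\rho)$ is trivial, I will use atoroidality: the boundary of a neighborhood of $N(t_\rho)\cup R'$ is a torus, so it must be compressible or boundary parallel. That forces the knotted-arc exterior to be a solid torus, that is, $t_\rho$ is unknotted. It follows that $R$ is of type I. If instead $R$ meets $A_1$ on the other side, so that it runs around $F_1$, it is of type I$^*$.

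\textbf{Pattern (a).} This is the main obstacle. Both $F$-arcs lie in $T_\rho$, so $R$ avoids $t_1$ except near $P$. Then $R\cap P$ consists of two arcs with ends on $c$, each either separating $a$ from $b$ or cutting off a disk. Inessential configurations are excluded as before. In the remaining configuration, the boundary of a neighborhood of $R\cup P$ (or an outermost-disk argument) should give a disk $Q$ in $B_R$ that is a parallelism of $t_1$ to $\partial B_R$ missing $t_2$, where $B_R$ comes from the canonical decomposition. By atoroidality, $(B_R,G_R)$ is then a torus tangle in the sense of Wu \cite{Wu:96}. I then need to count $|\partial Q\cap\partial D|=4$, as in Section \ref{subsec:rho_rectangle_typeII}, to force the $(\pm2,1)$ case up to Dehn twists $\mathbf t^r$ along the twisting disk $O$, giving the $(-2,\pm2k+1)$-torus $\rho$-tangles and type II rectangles. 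The delicate part is showing that the nonrational torus case reduces to the rational slope $\pm\frac14$ after untwisting by $\mathbf t^{-r}$. For this I would use the fact that $\mathbf t$ preserves $\Compl{t_\rho}$ and carries good rectangles to good rectangles, normalizing so that $R$ meets $O$ minimally.
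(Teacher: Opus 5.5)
\textbf{Where the proof breaks.} Grouping by where the $F$-arcs lie is equivalent to the paper's grouping by the $P$-arcs. However, you put the type I* rectangles in the wrong pattern, and this breaks pattern (a).

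A type I* rectangle $R^*$ is the frontier of a regular neighborhood of $A_1\cup R$ with $R$ of type I; equivalently, it is the frontier in $\Compl{t_\rho}$ of the ball $\rnbhd{R'\cup t_1}$. It is disjoint from $A_1$. Its two $F$-arcs are parallel copies of the $T_\rho$-arc of $R$, and its two $P$-arcs run from $c$ around $a$, respectively $b$, back to $c$. So $R^*$ lies in pattern (a). In pattern (b), by contrast, the disk $R'$ is forced and $R$ is always of type I; there is no ``other side''.

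Consequently your plan for pattern (a) is aimed at a false statement. You want a parallelism $Q$ of $t_1$ to $\partial B_R$ with $\vert\partial Q\cap\partial D\vert=4$, which would land you in the $(\pm 2,2k+1)$ family. But a $(3,2)$-torus $\rho$-tangle has type I* rectangles in pattern (a) and no type II rectangle. Moreover, the twisting disk $O$ only exists once $t_\rho$ is known to be unknotted, and nothing in your pattern (a) sketch establishes this.

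\textbf{The missing idea, and two smaller repairs.} What is missing is the orientation dichotomy the paper uses. Both $P$-arcs $\alpha,\beta$ join $c$ to itself and separate $a$ from $b$, so they are parallel in $P$ as unoriented arcs. Orient $\partial R$ and ask whether they are parallel as oriented arcs. Join them by an arc $\gamma\subset P$ to get loops $l_+=\alpha_+\cup\gamma\cup\beta_-\cup\delta$ and $l_-=\beta_+\cup\gamma\cup\alpha_-\cup\delta'$, with $[l_+]+[l_-]=[\partial R]$ in $H_1(\partial\Compl{t_\rho})$.

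If $\alpha,\beta$ are anti-parallel, then $l_\pm$ can be made disjoint, hence parallel, so $[\partial R]=0$. Then $R$ cuts off from $\Compl{t_\rho}$ a ball containing $t_1$, in which $t_1$ is unknotted by atoroidality; this gives type I*.

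If $\alpha,\beta$ are parallel, then $\vert l_+\cap l_-\vert=1$, so their slopes have determinant $1$ and $[\partial R]\neq 0$. Thus $R$ is non-separating, hence an essential disk of the knot exterior $\Compl{t_\rho}$. This is what forces $t_\rho$ to be trivial and $\partial R$ to be the $(0,1)$-curve. Solving gives $l_+=(1,-q)$ and $l_-=(-1,1+q)$. After applying $\mathbf t^q$, which makes $l_+$ a $(1,0)$-curve, $\mathbf t^q(R)\cap B_R$ is exactly the parallelism you wanted. Rationality then follows from atoroidality, and slope $-\frac14$ from Section \ref{subsec:rho_rectangle_typeII}.

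This computation replaces your unspecified step ``normalize so that $R$ meets $O$ minimally''. Note that $R$ and $O$ are then both meridian disks of the solid torus $\Compl{t_\rho}$, so $\mathbf t$ does not change the slope of $\partial R$. The twist is only visible in relative data such as $l_\pm$.

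Two smaller repairs are also needed.
\begin{itemize}
\item Pattern (c): no $2$-handle is needed, and one along $a$ is not even available, since $\partial R$ crosses $a$. The disk $R$ already lies in the knot exterior $\Compl{t_1}$, and its boundary crosses the meridian of $t_1$ twice in the same direction; this is the contradiction.
\item Pattern (b): the frontier of a neighborhood of $\rnbhd{t_\rho}\cup R'$ in $B$ is only a once-punctured torus. The closed torus you want is the frontier of a neighborhood of $\partial B\cup\rnbhd{t_\rho}\cup R'$.
\end{itemize}
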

\begin{proof}
Let $\alpha,\beta$ be the two arcs in $R\cap P$.
Since $A_1,T_\rho\subset F$ are disjoint, without loss of generality, there are six possible configurations of $\alpha,\beta\subset P$:
\begin{enumerate}[label=\textrm{(\roman*)}]
   \item\label{itm:rho_parallel_arcs1} 
   $\alpha,\beta$ each joins $a$ and $b$.
   \item\label{itm:rho_parallel_arcs2} $\alpha,\beta$ each joins $c$ and $a$.   
    \item\label{itm:rho_non_parallel_arcs} $\alpha$ joins $c$ and $a$; $\beta$ joins $c$ and $b$. 
    \item\label{itm:rho_arc_loop} $\alpha$ joins $a$ and $b$; $\beta$ joins $b$ to itself.
    \item\label{itm:rho_parallel_loops1} $\alpha,\beta$ each joins $a$ to itself.     
    \item\label{itm:rho_parallel_loops2} $\alpha,\beta$ each joins $c$ to itself.      
\end{enumerate} 
Case \ref{itm:rho_parallel_arcs1} cannot happen, for otherwise 
$R$ is an essential disk in the knot exterior $\Compl{t_1}$ with non-integral slope. Neither of Cases \ref{itm:rho_parallel_arcs2}, \ref{itm:rho_arc_loop}, \ref{itm:rho_parallel_loops1} occurs as well, for otherwise, $R\cap A_1$ contains an inessential arc in $A_1$, contradicting $R$ is good.

For Case \ref{itm:rho_non_parallel_arcs}, $R$ induces an isotopy of $t_1$ to an arc $\beta\subset\partial \Compl{t_\rho}$, relative with $\partial t_1$, in $\Compl{t_\rho}$ with $\vert\beta\cap c\vert=2$. By the atoroidality, $(B,t_\rho)$ is trivial, so it follows from Section \ref{subsec:torus_rho_tangle} that $(B,G)$ is a torus $\rho$-tangle, and $R$ is of type I.

For Case \ref{itm:rho_parallel_loops2}, we orient $\partial R$, and hence $\alpha,\beta$ inherit an induced orientation;
see Fig.\ \ref{fig:parallel}, \ref{fig:non_parallel}, respectively.
Let $\gamma$ be an arc in $P$ joining $\alpha$ and $\beta$ with $\gamma\cap(\alpha\cup\beta)=\partial \gamma$. In particular, $\partial \gamma$ cuts $\alpha$ (resp.\ $\beta$) into two arcs $\alpha_+,\alpha_-$ (resp.\ $\beta_+,\beta_-$). It may be assumed that $\alpha_+$ (resp.\ $\beta_+$) is going from $c$ to $\partial \gamma$ and $\alpha_-$ (resp.\ $\beta_-$) the opposite; see Figs.\ \ref{fig:non_parallel}, \ref{fig:parallel}. Let $\delta,\delta'$ be the arcs in $\partial Q\cap T_\rho$ with $\delta$ joining $\alpha_+,\beta_-$ and $\delta'$ joining $\beta_+,\alpha_-$.
Thus the union $l_+:=\alpha_+\cup \gamma \cup \beta_-\cup \delta$ (resp.\ $l_-:=\beta_+\cup \gamma \cup \alpha_-\cup \delta'$) is an oriented loop. Note that we have $[l_+]+[l_-]=[\partial R]$ in $H_1(\partial \Compl{t_\rho})$.

\begin{figure}[h]
\begin{subfigure}{.47\linewidth}
\centering
\begin{overpic}[scale=.2,percent]
{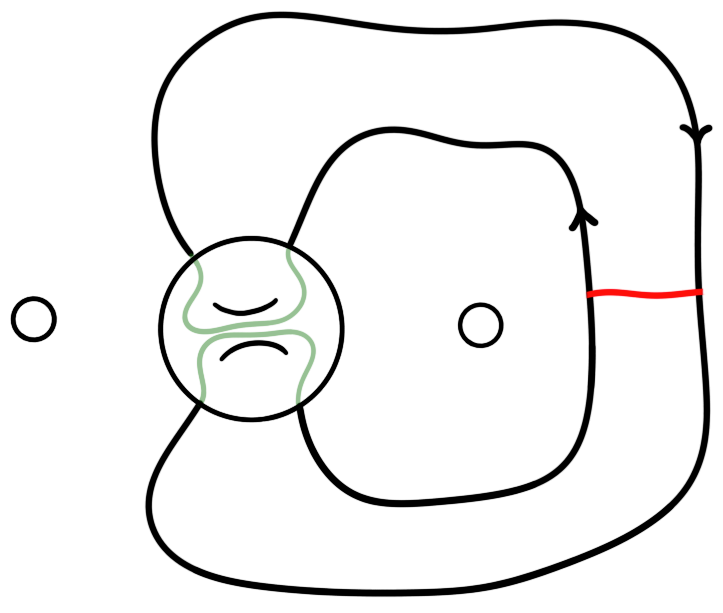}
\put(0,32){\footnotesize $a$}
\put(60,32){\footnotesize $b$}
\put(17,30){\footnotesize $c$}
\put(85,73){\small $\alpha_+$}
\put(72,23){\small $\beta_+$}
\put(82,5){\small $\alpha_-$}
\put(72,57){\small  $\beta_-$}
\put(85,38){\footnotesize $\gamma$}
\put(38,30){\tiny $\delta'$}
\put(28,45){\tiny $\delta$}
\end{overpic}
\caption{$\alpha,\beta$ not parallel.}
\label{fig:non_parallel}
\end{subfigure}
\begin{subfigure}{.47\linewidth}
\centering
\begin{overpic}[scale=.2,percent]
{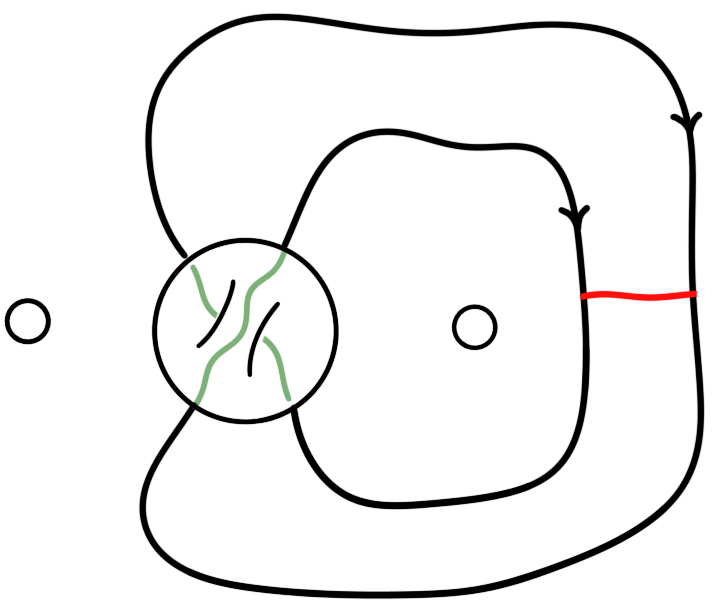} 
\put(0,32){\footnotesize $a$}
\put(60,32){\footnotesize $b$}
\put(17,30){\footnotesize $c$}
\put(85,73){\small $\alpha_+$}
\put(72,57){\small  $\beta_+$}
\put(82,5){\small $\alpha_-$}
\put(72,23){\small $\beta_-$}
\put(85,38){\footnotesize $\gamma$}
\put(41,33){\tiny $\delta'$}
\put(29,46){\tiny $\delta$}
\end{overpic}
\caption{$\alpha,\beta$ parallel.}
\label{fig:parallel}
\end{subfigure}
\caption{parallel or not.}
\end{figure}

If $\alpha,\beta$ are not parallel as oriented arcs, then $l_+,l_-$ can be isotoped so they are disjoint. 
In particular, $l_+,l_-$ are parallel, as unoriented loops, in $\partial\Compl{t_\rho}$, and hence $R\subset\Compl {t_\rho}$ is separating. Let $B'$ be the $3$-ball cut off by $R$ from $\Compl{t_\rho}$. Then $t_1\subset B'$ since $R$ is good. By the atoroidality, $(B',t_1)$ is a trivial ball-arc pair, so $t_1$ is parallel, in $B'$, to an arc $\beta\subset \partial\Compl{t_\rho}$ with $\beta\cap c$ two points through a parallalism $R'$. By the atoroidality, $(B,t_\rho)$ is a trivial, and hence $(B,G)$ is a torus $\rho$-tangle. Since $B'$ is a regular neighborhood of $R'\cup t_1$, and $R$ is the frontier of $B'$, we obtain $R$ is of type I*.

If $\alpha,\beta$ are parallel as oriented arcs, then $l_+,l_-$ can be isotoped so that $\vert l_+\cap l_-\vert=1$. Suppose $l_+$ is a $(p,q)$-curve and $l_-$ is a $(r,s)$-curve in $\Compl{t_\rho}$. Then it may be assumed that
\begin{equation}\label{eq:one_point_intersection}
1=\begin{vmatrix}
 p&r\\
 q&s\\    
\end{vmatrix}.    
\end{equation}
This implies that $[l_+]+[l_-]=[\partial R]$ cannot be trivial, and hence $R$ is non-separating. The disk $R$ is, therefore, essential in $\Compl {t_\rho}$, so $\partial R$ is a $(0,1)$-curve in $\Compl{t_\rho}$, and hence $(B,t_\rho)$ is a trivial. We thus have $(p,q)+(r,s)=(0,1)$. This, together with \eqref{eq:one_point_intersection}, implies that 
\begin{equation}
\begin{pmatrix}
 p&r\\
 q&s\\    
\end{pmatrix} 
=
\begin{pmatrix}
  1&-1\\
 -q&1+q \\    
\end{pmatrix} 
\end{equation} 

Let $\mathbf t$ the Dehn twist along an essential disk in $\Compl{t_\rho}$ disjoint from $P$. Then the effect of $\mathbf t^k$ on $l_+,l_-$ is given by 
\begin{equation}
\begin{pmatrix}
 1&0\\
 k&1\\    
\end{pmatrix} 
\begin{pmatrix}
  1&-1\\
 -q&1+q \\    
\end{pmatrix} 
\end{equation} 
When $k=q$, $\mathbf t^q(l_+)$ is a $(1,0)$-curve and $\mathbf t^q(l_-)$ is a $(-1,1)$-curve. In particular, 
$\partial \mathbf t^q(R)$ meets the meridian of $\Compl {t_\rho}$ once.
Therefore $Q:=\mathbf t^q (R)\cap B_R$ induces a parallelism between $t_1$ and an arc $\beta$ in $\partial B_R$. By the atoroidality, $(B,\mathbf t^q(t_1)\cup t_\rho)$ is rational, and by Section \ref{subsec:rho_rectangle_typeII}, it has a slope of $-\frac{1}{4}$, and hence a $(-2,1)$-torus tangle. This implies $(B,G)$ is a $(-2, +2q+1)$-torus $\rho$-tangle, and $R$ is of type II.
\end{proof}

%% file: classification.ann.tex
It follows from the definition that if a $\tau$-tangle admits a good annulus, then it is toroidal. On the other hand, there are atoroidal $\rho$-tangles whose exteriors admit a good annulus.


\subsection*{Satellite $\rho$-tangle}
A \emph{satellite} $\rho$-tangle is a $\rho$-tangle $(B,G)$ where there exists a $(p,q)$-curve $\alpha$ in $\partial\Compl{t_\rho}$ such that there is 
a regular neighborhood $N\subset \Compl{t_\rho}$ of $\alpha$ containing $P$ with $t_1\subset N$ and $t_1$ meets every meridian disk $D$ of $N$ disjoint from $P$. In addition, we require $\vert p\vert>1$ if $\Compl{t_\rho}$ is a trivial knot exterior. The frontier of $N$ is then a good annulus, called an annulus of \emph{type I}; see Fig.\ \ref{fig:good_annulus}.
\subsection*{Cable $\rho$-tangle}
A \emph{cable $\rho$-tangle} is a $\rho$-tangle $(B,G)$ where $\Compl {t_\rho}$ is the exterior of a cable knot with a cabling annulus $A$ disjoint from $t_1$. The good annulus $A\subset \Compl G$ is called an \emph{annulus of type II}.

\subsection*{Hopf type}
Given a $\rho$-tangle $(B,G)$, if $\Compl {t_\rho}$ is a trivial knot exterior, and there is an essential disk $D\subset\Compl{t_\rho}$ that meets $t_1$ at a point, then the good annulus $A:=D\cap\Compl{G}$ is called an annulus of \emph{Hopf type}. 
Note that $A$ meets the meridian $m$ of $\Compl{t_\rho}$ at one point, and the frontier of a regular neighborhood of $A\cup m$ is a meridional pair of pants which induces a $2$-sphere that bounds a $3$-ball $B'\subset B$ with $(B',B'\cap G)$ a Hopf $\rho$-tangle, called a Hopf $\rho$-summand. Conversely, if a $\rho$-tangle $(B,G)$ admits a Hopf $\rho$-summand, then it admits an annulus of Hopf type.

\subsection{Classification of good annuli}
\begin{lemma}\label{lm:satellite_cable_hopf}
Given an atoroidal $\rho$-tangle $(B,G)$, then the following are mutually exclusive:
\begin{enumerate}[label=(\roman*)]
    \item\label{itm:satellite} $(B,G)$ is satellite;
    \item\label{itm:cable} $(B,G)$ is cable;
    \item\label{itm:hopf} $(B,G)$ has a Hopf summand.
\end{enumerate}
\end{lemma}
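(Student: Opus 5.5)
The plan is to distinguish the three classes by the knot type of $t_\rho$, i.e.\ by the exterior $\Compl{t_\rho}$, together with how $t_1$ sits relative to the relevant solid torus or annulus. Atoroidality of $(B,G)$ will force strong restrictions on $\Compl{t_\rho}$. The first step is to record these restrictions.

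\textbf{Cable case.} If $(B,G)$ is cable, then $\Compl{t_\rho}$ is a cable-knot exterior; in particular it is a nontrivial knot exterior.

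\textbf{Hopf case.} If $(B,G)$ has a Hopf summand, then by definition (via the annulus of Hopf type) $\Compl{t_\rho}$ is a trivial knot exterior. Hence \ref{itm:cable} and \ref{itm:hopf} are mutually exclusive immediately.

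\textbf{Satellite case.} If $(B,G)$ is satellite, let $N$ be the regular neighborhood of the $(p,q)$-curve $\alpha$, and let $T'$ be the torus obtained from $\partial N$ by pushing it into the interior of $\Compl{t_\rho}$. I will show that $\Compl{t_\rho}$ must be a trivial knot exterior. Suppose instead that $\Compl{t_\rho}$ is nontrivial. Then $\partial N\cup P$, pushed off $\partial B$, gives a torus $T'$ in $\Compl{G}$ that separates $t_1$ from $\partial \Compl{t_\rho}$ on the side away from $P$. It is incompressible on the $t_\rho$ side because that side is a nontrivial knot exterior. It is incompressible on the $t_1$ side because $t_1$ meets every meridian disk of $N$ disjoint from $P$. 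It is not boundary parallel. So $T'$ is an essential torus, contradicting atoroidality. Hence in cases \ref{itm:satellite} and \ref{itm:hopf} the knot $t_\rho$ is trivial, so \ref{itm:satellite} and \ref{itm:cable} are mutually exclusive, and only the pair \ref{itm:satellite}/\ref{itm:hopf} remains. (If instead the definition already forces triviality, this step is simply quoted.)

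\textbf{Satellite versus Hopf.} Now assume $\Compl{t_\rho}$ is a solid torus, so by the definition $|p|>1$. Suppose $(B,G)$ is both satellite and Hopf. The Hopf condition gives an essential disk $D\subset\Compl{t_\rho}$ with $|D\cap t_1|=1$. Since $\Compl{t_\rho}$ is a solid torus, $D$ is its meridian disk. Isotope $D$ to minimize its intersection with the annulus $A=\mathrm{Fr}\, N$. Then $D\cap A$ consists of essential arcs, because $A$ is incompressible in $\Compl{t_\rho}-t_1$ and $D$ is a meridian. Each arc of $D\cap N$ can be isotoped to a meridian disk of $N$, or $D\cap N$ lies on the side containing $P$. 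Each component of $D\cap N$ that avoids $P$ is a meridian disk of $N$, and by hypothesis each such disk meets $t_1$. The number of these components is $|\alpha\cap \partial D|=|p|$, since $\alpha$ is a $(p,q)$-curve and $\partial D$ is the meridian of the solid torus. This gives at least $|p|\ge2$ intersection points of $D$ with $t_1$, contradicting $|D\cap t_1|=1$.

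\textbf{Main obstacle.} The hardest step is making the last count rigorous. The disk $D$ meets $N$ not in disks but in a region that also contains part of $P$ near $\partial B$, since $N$ contains $P$. To handle this, I will cut $N$ along $P$, or equivalently use a meridian disk $E$ of $N$ disjoint from $P$ and count intersections in $N-E$. Alternatively, I will argue homologically: $t_1$ together with an arc of $P$ forms a loop homologous in $N$ to $k\alpha$ with $k\ne0$, since $t_1$ meets every meridian disk. Its algebraic intersection with $D$ is then $kp$. Since $|p|\ge2$, this is incompatible with the geometric intersection $|D\cap t_1|=1$, provided the arc on $P$ is chosen to avoid $\partial D$, which is possible after isotopy of $D$ near $\partial B$.
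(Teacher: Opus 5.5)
Your argument is correct in substance. Its first half is exactly the paper's: a Hopf summand forces $\Compl{t_\rho}$ to be a solid torus, a cable forces it to be nontrivial, and a satellite with nontrivial $t_\rho$ yields an essential torus.

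You depart from the paper in excluding satellite together with Hopf. The paper intersects the type I annulus $A=\mathrm{Fr}\,N$ with the Hopf annulus $D\cap\Compl G$. Every intersection arc has both ends on $\partial D$, so an outermost subdisk of the Hopf annulus is a $\partial$-compressing disk of $A$ disjoint from $P$, contradicting goodness of $A$; that is where $|p|>1$ enters, implicitly. You use an intersection-number obstruction in the solid torus $\Compl{t_\rho}$ instead.

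Keep your homological version. It is cleaner than the paper's cut-and-paste: it needs no minimal-position argument, and it uses only $t_1\subset N$ and $|p|>1$, not the meridian-disk condition. The argument runs as follows.
\begin{itemize}
\item Close $t_1$ by an arc $\gamma$ in the disk $\partial B\cap\partial\Compl{t_\rho}$. This disk lies in the annulus $N\cap\partial\Compl{t_\rho}$, which contains $P$, and it misses $\partial D\subset T_\rho$.
\item Then $\ell=t_1\cup\gamma$ meets $D$ in exactly one transverse point, so $\ell\cdot D=\pm1$.
\item The image of $H_1(N)$ in $H_1(\Compl{t_\rho})\cong\mathbb Z$ is generated by $[\alpha]=p[m]$, since $l=\partial D$ is null-homologous. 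Hence $\ell\cdot D\in p\mathbb Z$, which is impossible for $|p|\ge 2$.
\end{itemize}

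Three points in your write-up need correcting.
\begin{itemize}
\item Drop the claim $k\neq0$. That $t_1$ meets every meridian disk of $N$ is a wrapping-number condition and does not force nonzero winding (think of Whitehead-type patterns). As shown above, you do not need it.
\item The geometric count is not justified as written. Remove circles and inessential arcs of $D\cap A$ by innermost/outermost moves, using that $t_1$ meets a separating disk an even number of times. After that, the components of $D\cap N$ are disks, but they need not all be meridian disks of $N$. What is true is that at least $|p|$ of them are meridional, because their algebraic intersections with the core of $N$ sum to $\pm p$. Each meridional component meets $t_1$, which gives the contradiction.
\item The torus in the satellite step is misdescribed. $\partial N$ contains the endpoints of $t_1$, so it cannot be pushed off $t_1$. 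The right torus is $A\cup A'$, where $A'\subset T_\rho$ is the annulus complementary to $N\cap\partial\Compl{t_\rho}$; this is a copy of $\partial\Compl{t_\rho}$ pushed past $N$. On the $t_1$-side it bounds a collar $T^2\times I$ minus an arc, so incompressibility there is automatic and does not come from the meridian-disk condition.
\end{itemize}
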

\begin{proof}
Note first that $(B,G)$ has a Hopf $\rho$-summand, then $\Compl{t_\rho}$ is a trivial knot exterior. 
Secondly observe that, 
if $(B,G)$ is satellite with $\Compl {t_\rho}$ a non-trivial knot exterior, then the union $A\cup A'$ induces an incompressible torus in $\Compl G$, contradicting the atoroidality, where $A'\subset T_\rho$ is the annulus cut off by $\partial A$.  
Therefore, $\Compl {t_\rho}$ is also a trivial knot exterior. 
On the other hand, if $(B,G)$ is cable, then 
$\Compl{t_\rho}$ is necessarily non-trivial. 
Thus \ref{itm:cable} cannot coexist along with \ref{itm:satellite} or \ref{itm:hopf}.

If $(B,G)$ is satellite and have a hopf summand at the same time, then the type I annulus $A$ cuts off an outermost disk $D$ from the annulus of Hopf type, but $D$ is then a $\partial$-compressing disk of $A$ disjoint from $P$, contradicting $A$ being good. 
\end{proof}

\begin{theorem}\label{teo:annuli_rho_tangle}
Given an atoroidal $\rho$-tangle $(B,G)$, suppose its exterior $\Compl G$ contains a good annulus $A$.
Then $(B,G)$ 
\begin{enumerate}[label=(\roman*)]
    \item
    is a satellite $\rho$-tangle, and $A$ is of type I, or 
    \item 
    is a cable $\rho$-tangle, and $A$ is of type II, or
    \item
    has a Hopf $\rho$-summand, and $A$ is of hopf type.
\end{enumerate} 
In addition, every other good annulus in $\Compl G$ is isotopic to $A$ in $\Compl G-P$.    
\end{theorem}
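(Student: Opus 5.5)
We will classify according to how $\partial A$ sits in $\partial\Compl G=P\cup F$, where $F=A_1\sqcup T_\rho$. Since $A$ is good, it is incompressible and has no $\partial$-compressing disk disjoint from $P$. Since $A$ is disjoint from $P$ except possibly along its boundary, each component of $\partial A$ lies in $P$, $A_1$, or $T_\rho$. First we use incompressibility of $P$ and $A_1$ (Lemma~\ref{lm:atoroidal_essentiality}; the tangle being essential or Hopf is handled below), together with an outermost-arc argument, to arrange that $\partial A$ consists of essential curves. The possibilities are then: (a) both components in $T_\rho$; (b) both in $P\cup A_1$, hence meridional-type curves parallel to $a,b,c$; (c) mixed. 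In case (b) or (c) the curves in $P\cup A_1$ are parallel to some of $a,b,c$. If a curve is parallel to $a$ or $b$ it bounds a meridian disk of $N(t_1)$. Capping off that curve with this disk produces a disk meeting $t_1$ once, bounded by the other boundary curve. If that curve lies in $T_\rho$, it is a longitude bounding an essential disk of $\Compl{t_\rho}$ (so $\Compl{t_\rho}$ is trivial), and we obtain the Hopf type. The remaining subcases (both boundary curves parallel to $c$, or to $a/b$ on opposite sides) should give a compressing or $\partial$-compressing disk disjoint from $P$, or contradict atoroidality via a torus formed with an annulus of $\partial\Compl G$.

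Case (a) is the main case. Here $\partial A$ consists of two parallel essential curves in $T_\rho$ of some slope $(p,q)$ with respect to $t_\rho$. They cut off an annulus $A'\subset T_\rho$; choose $A'$ on the side containing $c$, or on the other side. The torus $A\cup A'$ pushed into $\Compl G$ is inessential by atoroidality. So either it compresses, or it bounds a solid torus or a knot exterior on the side not containing $t_1$, or it is $\partial$-parallel.

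If $\Compl{t_\rho}$ is nontrivial, $A\cup A''$ (with $A''$ the complementary annulus avoiding $P$) is a torus in $\Compl{t_\rho}$. Incompressibility of $A$ and atoroidality force $A$ to be a cabling annulus of $t_\rho$ disjoint from $t_1$, which is type II. Alternatively the torus bounds a solid torus $N$ containing $t_1\cup P$. Since $t_\rho$ is knotted, $N$ then creates an essential torus unless $N$ is a collar, and a collar makes $A$ $\partial$-parallel; but a parallelism region not containing $P$ yields a $\partial$-compressing disk disjoint from $P$. This last point is where $P$-avoidance enters.

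If $\Compl{t_\rho}$ is trivial, $A$ cuts the solid torus $\Compl{t_\rho}$ into a solid torus $N$ containing $P$ and $t_1$ and a piece $W$. We must show $W$ is a collar (forcing $A$ parallel to $T_\rho$, which is excluded as $\partial$-compressible away from $P$ unless...) or that $N$ is a neighborhood of a $(p,q)$-curve. Then $t_1$ meeting every meridian disk of $N$ follows: otherwise a meridian disk of $N$ missing $t_1$ and $P$ compresses $A$ or yields a $\partial$-compression. The condition $|p|>1$ arises because $|p|=1$ makes $A$ parallel into $T_\rho$ through a region avoiding $P$. This gives the satellite type I.

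The uniqueness statement is the hardest part. Given two good annuli $A,A'$, we would isotope them in $\Compl G-P$ to minimize $|A\cap A'|$. The intersection circles are then essential in both, and by Lemma~\ref{lm:satellite_cable_hopf} both annuli have the same type. In the satellite and cable cases we would use uniqueness of the companion solid torus, namely the JSJ/characteristic annulus uniqueness in the boundary-pattern sense of Johannson. In the Hopf case we would use that the disk meeting $t_1$ once is unique up to isotopy in the unknotted solid torus. Either way the two annuli would be shown to cobound product regions avoiding $P$. We expect this parallelism argument, and especially ruling out nonparallel copies in the satellite case via the outermost $\partial$-compressing disk trick from Lemma~\ref{lm:satellite_cable_hopf}, to be the main obstacle.
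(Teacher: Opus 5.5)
\textbf{Classification half.} This follows the paper's route. A boundary component on the annulus of $F$ around $t_1$ is meridional, and capping it off gives a disk in $E(t_\rho)$ meeting $t_1$ once, which is the Hopf type. Otherwise $\partial A$ consists of parallel essential curves on $\partial E(t_\rho)$, and one studies $A$ inside $E(t_\rho)$.

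Your subcase with $E(t_\rho)$ a solid torus is left unfinished. It closes cleanly if you split, as the paper does, by whether $A$ is essential in $E(t_\rho)$:
\begin{itemize}
\item If $A$ is essential there, it is the cabling annulus (type II).
\item If $A$ is inessential and not $\partial$-parallel, it is compressible in $E(t_\rho)$, and every compressing disk meets $t_1$. The region cut off by $A$ and the annulus of $T_\rho$ missing $t_1$ is then a nontrivial knot exterior. Its boundary torus is essential in $E(G)$, contradicting atoroidality. You list this knot-exterior configuration but never rule it out.
\item If $A$ is $\partial$-parallel, goodness forces the parallelism solid torus $N$ to contain $t_1$, with every meridian disk of $N$ missing $P$ meeting $t_1$. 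That is type I.
\end{itemize}

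\textbf{Uniqueness: the genuine gap.} You do not actually prove the uniqueness assertion, and the tools you propose would not deliver it.

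First, after minimizing, the intersection of $A$ with a second good annulus $\hat A$ need not consist of circles. If the boundary slopes on $T_\rho$ differ, $\partial A$ and $\partial\hat A$ must cross, which produces arcs. Handling these arcs is the crux of the paper's satellite case:
\begin{itemize}
\item Distinct slopes force an arc of $A\cap\hat A$ that is essential in one annulus and inessential in the other.
\item The disk this arc cuts off from the second annulus is a $\partial$-compressing disk of the first, disjoint from $P$, contradicting goodness.
\item So the slopes agree, and one may take $\partial A=\partial\hat A$ and the same annulus $A'\supset P$ of $\partial E(t_\rho)$.
\item The two parallelism solid tori both contain $t_1$, so they can be nested as $N\subset\hat N$, and $\overline{\hat N-N}$ is a product missing $t_1$ and $P$.
\end{itemize}
You gesture at this outermost-disk trick but treat it as the expected obstacle rather than carrying it out.

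Second, appealing to Johannson/JSJ uniqueness does not help by itself. It gives uniqueness of the characteristic submanifold, not of an individual essential annulus in it; Seifert-fibred and $I$-bundle pieces typically carry many non-isotopic ones. You would first have to identify the characteristic pieces, which is the whole job.

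Third, in the Hopf case, uniqueness of the meridian disk of the solid torus $E(t_\rho)$ is not enough, because the isotopy must avoid $t_1$. The paper does the following:
\begin{itemize}
\item It makes the two annuli disjoint, using atoroidality.
\item It observes that the two induced disks and an annulus of $T_\rho$ bound a ball meeting $t_1$ in a single arc.
\item It uses atoroidality again to see that this ball-arc pair is trivial, which gives the parallelism.
\end{itemize}

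Only the cable case reduces directly to a known fact, namely uniqueness of the cabling annulus.
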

\begin{proof}
By the atoroidality, at most one component of $\partial A$ is in $A_1$. If $\partial A$ meets $A_1$, then the component $F_1\cap \partial A$ is meridional, and hence induces an essential disk in $\Compl{t_\rho}$ intersecting $t_1$ once. This implies $A$ is of Hopf type. 

If $\partial A$ does not meet $F_1$, then $\partial A$ are parallel essential loops in $\partial \Compl{t_\rho}$.

\textbf{Case $1$: $A\subset\Compl{t_\rho}$ is inessential.}
If $A$ is not $\partial$-parallel, then $A$ admits a compressing disk $D$ with $D\cap t_1\neq \emptyset$.
Let $A'$ be the annulus in $\partial \Compl{t_\rho}$ cut off by $\partial A$ and disjoint from $t_1$. 
Then $A\cup A'$ cut off a $3$-manifold $V$ from $\Compl{t_\rho}$,
which is a non-trivial knot exterior, given $A$ is not $\partial$-parallel. Since $D$ meets $t_1$, 
$\partial V$ induces an incompressible torus in $\Compl G$, contradicting the atoroidality.

If $A$ is $\partial$-parallel, then $A$ is parallel to an annulus $A'\subset \partial \Compl{t_2}$ through a solid torus $N$.  
Since $A$ is good, we have $t_1\subset N$ and every meridian disk $D\subset N$ disjoint from $P$ meets $t_1$. Now, $N$ can be regarded as a regular neighborhood of the core of $A'$ with $A$ its frontier in $\Compl{t_\rho}$, so 
$(B,G)$ is a satellite $\rho$-tangle and $A$ is of \emph{type I}.

\textbf{Case 2: $A\subset\Compl{t_\rho}$ is essential.}
This implies that $\Compl{t_\rho}$ is the exterior of a cable knot with $A$ a cabling annulus disjoint from $t_1$, so 
$(B,G)$ is a cable $\rho$-tangle with $A$ a type II annulus.

To see the second assertion, we suppose $A_1$ is another good annulus in $\Compl G$. By Lemma \ref{lm:satellite_cable_hopf}, we can divide it into three cases:

\textbf{Case S:} $A,A_1$ are of type I, $(B,G)$ is satellite.
Then $A,A_1$ is parallel to $A',A_1'\subset \partial\Compl{t_\rho}$ through solid tori $N,N_1$ containing $t_1$, respectively. If the core of $A_1'$ is not isotopic to the core of $A'$ in $\partial\Compl{t_\rho}$, then 
$A_1\cap A$ contains arc inessential in one of $A,A_1$ and essential in the other, contradicting $A,A_1$ being good. As a result, $A, A_1$ are isotopic to the same annulus $\partial \Compl{t_\rho}$ in $\Compl{t_\rho}$. 
Thus it may be assumed that $\partial A_1=\partial A$, 
$A'=A_1'$. Since $t_1$ is in both $N$ and $N_1$, one may further assume that $t\in N\subset N_1$, and hence 
$A,A_1$ are isotopic through $\overline{N-N_1}$ in $\Compl G-P$.  

\textbf{Case C:} $A, A_1$ are of type II and $(B,G)$ is cable, the assertion follows from the fact that the cabling annulus of a cable knot is unique, up to isotopy. 

\textbf{Case H:} $A,A_1$ are of Hopf type and $(B,G)$ has a Hopf $\rho$-summand. By the atoroidality, we can isotope $A,A_1 $ so they are disjoint. The induced disks $D,D_1$, 
together with the annulus cut off from $T_\rho$ by $\partial (D\cup D_1)$, bouns a $3$-ball intersecting $t_1$ at an arc, by the atoroidality, the ball-arc pair is trivial, and hence $A,A_1$ are parallel. 
\end{proof}

%% file: applications.tex
Throughtout the section, $V$ denotes a genus two handlebody-knot, and $(B_1,G_1)\cup_S (B_2,G_2)$ a tritangle decomposition of $V$. We let $P$ be the pair of pants $S\cap\Compl V$, and $a,b,c$ the three boundary components of $P$. 
Also, we say a tritangle decomposition is \emph{special} if $(B_i,G_i)$ is rational or a $\rho$-tangle, $i=1,2$, and their special ends meet; in this case, $c\subset \partial P$ denotes the component corresponding to the special end.

\begin{figure}[b]
\begin{subfigure}{.47\linewidth}
\centering
\includegraphics[scale=.16]{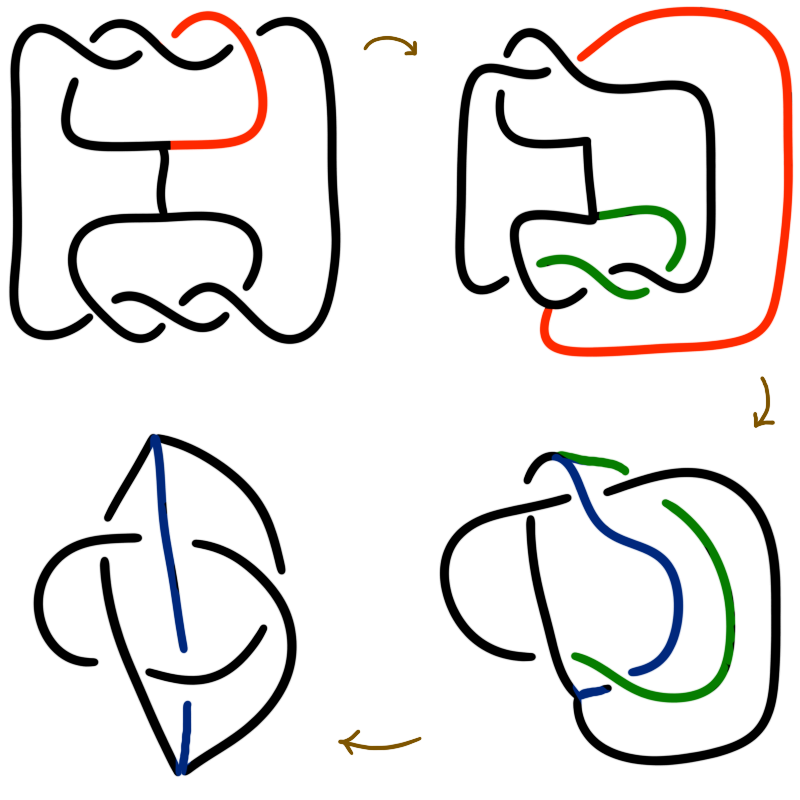} 
\caption{$m=n=3$ or $-3$.}
\label{fig:tautau_infinite}
\end{subfigure}
\begin{subfigure}{.47\linewidth}
\centering
\includegraphics[scale=.15]{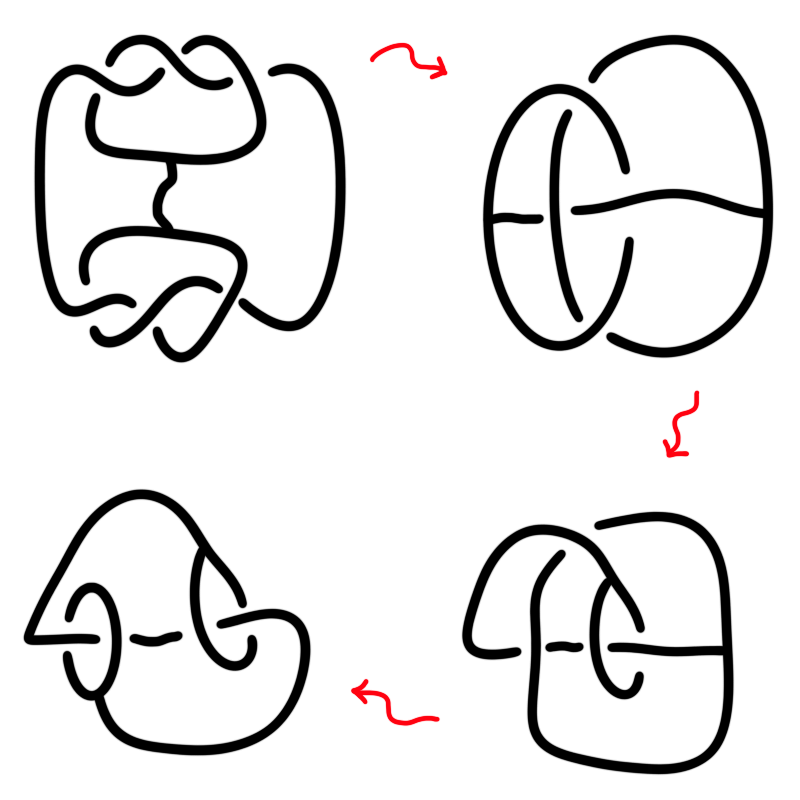}
\caption{$\{m,n\}=(3,-3)$.}
\label{fig:isotopy_fourone}
\end{subfigure}
\begin{subfigure}{.47\linewidth}
\centering
\begin{overpic}[scale=.17,percent]{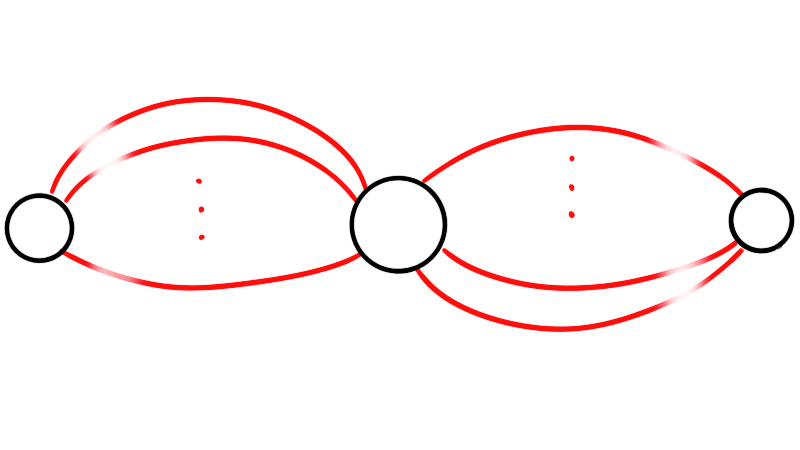} 
\put(1,20){\footnotesize $a$}
\put(46,19){\footnotesize $c$}
\put(96,20){\footnotesize $b$}
\put(8,40){\footnotesize $\alpha_1$}
\put(11,35){\footnotesize $\alpha_2$}
\put(9,21){\footnotesize $\alpha_n$}
\put(80,36){\footnotesize $\beta_n$}
\put(82,23){\footnotesize $\beta_2$}
\put(83,15){\footnotesize $\beta_1$}
\end{overpic}
\caption{Configurations of type I rectangles.}
\label{fig:tau_typeI_config}
\end{subfigure}
\begin{subfigure}{.47\linewidth}
\centering
\begin{overpic}[scale=.15,percent]{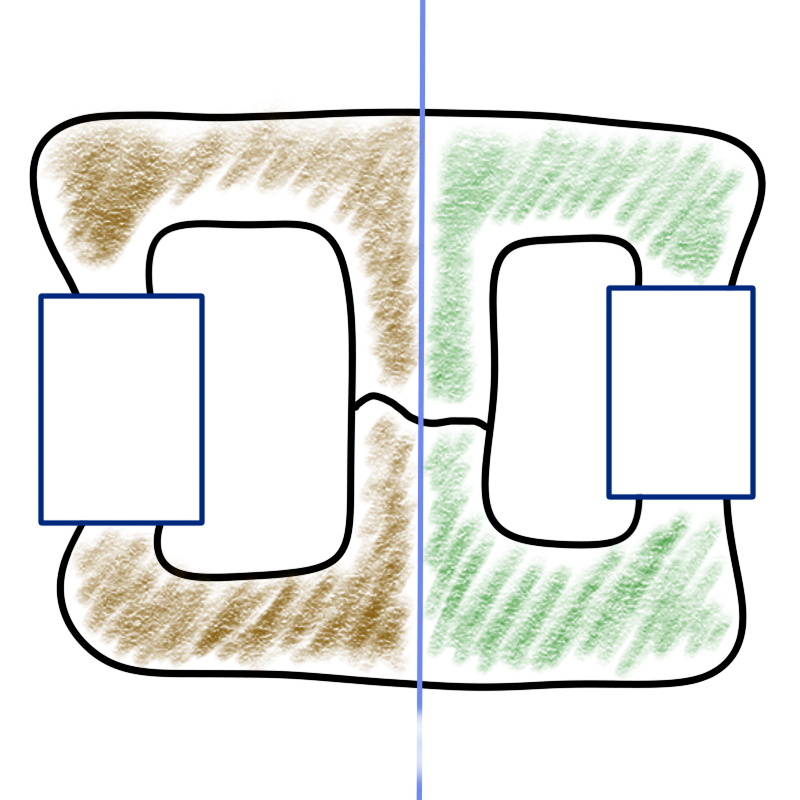}
\put(12,48){$m$}
\put(84,49){$n$}
\put(50,3){$P$}
\put(20,5){$B_1$}
\put(80,5){$B_2$}
\end{overpic}
\caption{Odd integers $m,n$.}
\label{fig:tautau_unique}
\end{subfigure}
\caption{}
\end{figure}
 
\begin{theorem}\label{teo:tautau_annuli_classification}
Suppose $V$ is atoroidal, and admits a $\tau\tau$-decomposition $(B_1,G_1)\cup_S (B_2,G_2)$.
Then, up to isotopy, the exterior $\Compl V$ contains
\begin{enumerate}[label=(\roman*)]
    \item\label{itm:tautau_infinite} infinitely many essential annuli if 
    the $\tau\tau$-decomposition is special, and both $(B_1,G_1),(B_2,G_2)$ have a slope of either $\frac{1}{3}$ or $-\frac{1}{3}$;
    \item\label{itm:tautau_three} three essential annuli if the $\tau\tau$-decomposition is special, and one of $(B_1,G_1),(B_2,G_2)$ has a slope of $\frac{1}{3}$ and the other has a slope of $-\frac{1}{3}$;
    \item\label{itm:tautau_one} one essential annulus if the $\tau\tau$-decomposition is special and the slopes of $(B_1,G_1),(B_2,G_2)$ are $\frac{1}{m},\frac{1}{n}$, respectively, with one of $n,m$ not equal to $\pm 3$;
\end{enumerate}
otherwise, $V$ is hyperbolic.
\end{theorem}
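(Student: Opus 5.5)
Since $V$ is atoroidal, Lemma \ref{lm:genus_two_disk} gives irreducibility, and Lemma \ref{lm:genus_two_torus} shows both $(B_i,G_i)$ are atoroidal. By Thurston's hyperbolization theorem it therefore suffices to count essential annuli $A\subset\Compl V$ up to isotopy. Isotope $A$ so that $\vert A\cap P\vert$ is minimal. If $A\cap P=\emptyset$, Lemma \ref{lm:annulus}\ref{itm:disjoint} makes $A$ a good annulus in some $\Compl{G_i}$. As noted at the start of Section \ref{sec:classification_ann}, a $\tau$-tangle exterior with a good annulus is toroidal, so this case is excluded. Hence $\vert A\cap P\vert>0$, and by Lemma \ref{lm:annulus}\ref{itm:intersecting} the arcs $A\cap P$ cut $A$ into good rectangles $D_1,\dots,D_n$, lying alternately in $\Compl{G_1}$ and $\Compl{G_2}$. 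Theorem \ref{teo:rectangle_tau_tangle} then says each $(B_i,G_i)$ is rational, with slope $\pm1/k$ ($k$ odd; this includes $\pm1/3$), and each $D_j$ is of type I or type II. In particular, if some $(B_i,G_i)$ is not of this form, $V$ is hyperbolic.

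Next I would use the explicit position of $\partial R\cap P$ for each type. A type I rectangle in $\Compl{G_i}$ meets $P$ in two arcs, one joining the special end $c_i$ to $a$ and one joining $c_i$ to $b$ (case \ref{itm:y_non_parallel_arcs} of the proof of Theorem \ref{teo:rectangle_tau_tangle}). A type II rectangle meets $P$ in an arc from $c_i$ to another boundary circle together with an arc from $c_i$ to itself (case \ref{itm:y_arc_loop}). Gluing rectangles across $P$ forces the arc systems from the two sides to coincide in $P$, and these arcs are read off relative to the special ends $c_1,c_2$. My first claim is that matching is impossible unless $c_1=c_2$, i.e.\ the decomposition is special. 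Granting that, the remaining task is a combinatorial count of closed cyclic chains of rectangles.

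\textbf{Slopes $1/m,1/n$ with one of $m,n\neq\pm3$.} Only type I is available on that side, so every $D_j$ there is type I. The type I arcs then force the rectangles on the other side to be of type I as well, since a type II arc would join $c$ to itself. Each $\Compl{G_i}$ contains a unique type I rectangle up to isotopy, being the parallelism disk between $t_1$ and $t_2$, and the arcs $c$--$a$ and $c$--$b$ determine an essentially unique pattern in $P$ (Fig.\ \ref{fig:tau_typeI_config}). So the only essential annulus is the one obtained from $n=2$, gluing the two type I rectangles. I must also show that chains with larger $n$, made of parallel copies, either fail to close up or are isotopic to this annulus; this gives case \ref{itm:tautau_one}.

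\textbf{Both slopes $\pm1/3$.} Type II rectangles can now alternate with type I rectangles. I would describe each rectangle by the induced map on arcs of $P$ with endpoints on $a,b,c$ and study the resulting return map. When the two slopes have the same sign, composing the gluings should produce a twist, so chains of every length close up; these give infinitely many pairwise non-isotopic annuli (Fig.\ \ref{fig:tautau_infinite}), which is case \ref{itm:tautau_infinite}. When the signs are opposite, the twists cancel, and only finitely many chains close: one of type I--I and two involving type II, for a total of three (Fig.\ \ref{fig:isotopy_fourone}), which is case \ref{itm:tautau_three}. To distinguish these annuli I would use the isotopy class of $A\cap P$ or the slopes of $\partial A$ on $\partial V$. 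This bookkeeping of how arcs of type II rectangles propagate through $P$, including the proof of non-isotopy and of the exact count three, is the main obstacle. I would first try an explicit parametrisation of the arc systems in the pair of pants by a single integer parameter, and track how each gluing shifts that parameter.
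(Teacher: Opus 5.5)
Your reduction and your argument for case~\ref{itm:tautau_one} are the paper's. The reduction runs as follows: there are no good annuli in $\tau$-tangle exteriors, so $P$ cuts every essential annulus into good rectangles, and Theorem~\ref{teo:rectangle_tau_tangle} then forces both tangles to be rational with slope $\pm 1/k$.

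The claim you ``grant'' is immediate. Every arc of a good rectangle in a $\tau$-tangle exterior has an endpoint on that tangle's special end. So if $c_1\neq c_2$, every arc of $A\cap P$ joins $c_1$ to $c_2$. That puts a rectangle in $\Compl{G_1}$ in configuration~\ref{itm:y_parallel_arcs} of the proof of Theorem~\ref{teo:rectangle_tau_tangle}, which is excluded. The closing-up step you defer in case~\ref{itm:tautau_one} is the paper's connectedness argument with the nested parallel copies of Fig.~\ref{fig:tau_typeI_config}.

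The genuine gap is cases~\ref{itm:tautau_infinite} and~\ref{itm:tautau_three}, which you only describe heuristically (``composing the gluings should produce a twist'', ``the twists cancel''). Nothing in the proposal shows any of the following:
\begin{enumerate}
\item that chains mixing type I and type II rectangles close up for every length;
\item that they close into annuli rather than one-sided surfaces (in the $\tau\rho$ analogue, Theorem~\ref{teo:taurho_annuli_classification}\ref{itm:taurho_four}, the natural closed chains are M\"obius bands and the annuli are their frontiers);
\item that the resulting annuli are pairwise non-isotopic in $\Compl V$ (isotopies need not respect $P$, so distinct arc patterns do not suffice);
\item the hardest part: that in case~\ref{itm:tautau_three} every essential annulus is isotopic to one of exactly three.
\end{enumerate}
Your guess ``one I--I and two involving type II'' is not derived from anything.

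The paper avoids this bookkeeping altogether. The hypotheses of~\ref{itm:tautau_infinite} (resp.\ \ref{itm:tautau_three}) pin down $V$ up to mirror image. Explicit isotopies (Figs.~\ref{fig:tautau_infinite}, \ref{fig:isotopy_fourone}) identify it with $5_2$ (resp.\ $4_1$) of the table. The counts then come from \cite[Theorem $2.4$]{Wan:24i} and \cite[Lemma $3.20$]{Wan:24ii}.

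To make your route work you would need a genuine global tool. One option is to exhibit the relevant Seifert-fibered or admissible $I$-bundle piece of the characteristic submanifold, as the paper does for the $\tau\rho$ count via two M\"obius bands meeting in one arc. As written, your proof establishes case~\ref{itm:tautau_one} and the hyperbolicity assertion, but not~\ref{itm:tautau_infinite} or~\ref{itm:tautau_three}.
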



\begin{proof}
Since there is no good annulus in a $\tau$-tangle exterior, there is no essential annulus in $\Compl V$ disjoint from $P$. Thus, $P$ cuts every essential annulus into some good rectangles. Since very good rectangle in a $\tau$-tangle exterior meets the special end, if $(B_1,G_1)\cup_S(B_2,G_2)$ is not special, then
$V$ is hyperbolic. Since a good rectangle exists only in the exterior of the rational $\tau$-tangle with a slope of $\frac{1}{n}$, $n\in\mathbb{Z}$, if one of $(B_1,G_1),(B_2,G_2)$ is not rational with a slope of $\frac{1}{n}$, then $V$ is hyperbolic. This proves the last assertion.

We suppose now that $(B_1,G_1)\cup_S(B_2,G_2)$ is special with both $(B_1,G_1),(B_2,G_2)$ rational with a slope of 
$\frac{1}{m},\frac{1}{n}$, respectively. 

\textbf{Case \ref{itm:tautau_infinite}:}
Up to mirror image, it may be assumed that the slopes of both $(B_1,G_1),(B_2,G_2)$ are $\frac{1}{3}$. 
By Fig.\ \ref{fig:tautau_infinite}, $V$ is equivalent to the handlebody-knot $5_2$ in the handlebody-knot table \cite{IshKisMorSuz:12}. Hence applying \cite[Theorem $2.4$]{Wan:24i}, we see $\Compl V$ contains infinitely many essential annuli, up to isotopy.

\textbf{Case \ref{itm:tautau_three}:} 
In this case, $V$ is equivalent to the handlebody-knot $4_1$ in the handlebody-knot table \cite{IshKisMorSuz:12}; see Fig.\ \ref{fig:isotopy_fourone}.  Thus, it follows from \cite[Lemma $3.20$]{Wan:24ii} that $\Compl V$ contains three essential annuli, up to isotopy.

\textbf{Case \ref{itm:tautau_one}:}
Consider the parallel copies $R_1,\cdots, R_k$ of the type I rectangle in the exterior of the $\tau$-tangle $(B_1,G_1)$ or $(B_2,G_2)$, and let $\alpha_i,\beta_i$ be the two arcs in $R_i\cap P$, $i=1,\cdots, n$. 
Then $\alpha_i,\beta_i, i=1,\dots, k$ are configured as in Fig.\ \ref{fig:tau_typeI_config}. Thus by the connectedness, $P$ cuts every essential annulus in $\Compl V$ into only two type I rectangles, one in $\Compl {G_1}$ and the other in $\Compl {G_2}$; see Fig.\ \ref{fig:tautau_unique}. Every two essential annuli in $\Compl V$ are hence isotopic. 
\end{proof}

\begin{figure}[b]
\begin{subfigure}{.47\linewidth}
\centering
\begin{overpic}[scale=.2, percent]{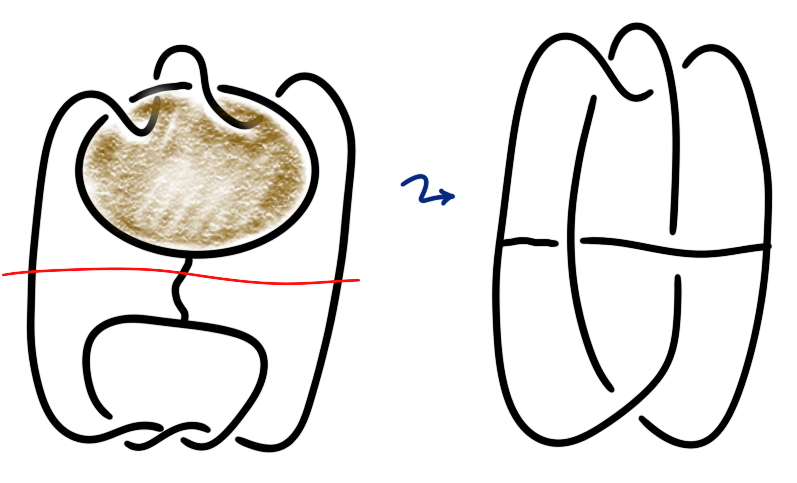}
\put(23,35){$D$}
\put(46,24){$P$}
\end{overpic} 
\caption{}
\label{fig:taurho_infinite}
\end{subfigure}
\begin{subfigure}{.47\linewidth}
\centering
\begin{overpic}[scale=.14, percent]{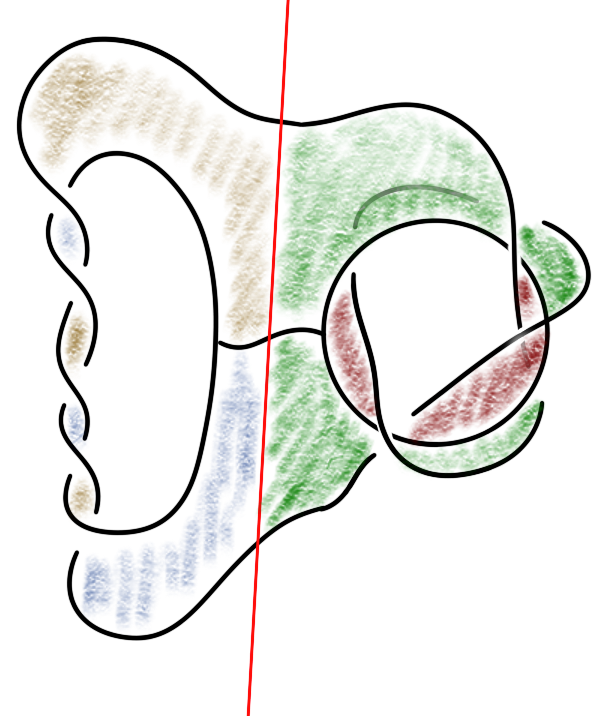}
\put(25,23){\footnotesize $M$}
\end{overpic} 
\caption{}
\label{fig:example_mobius}
\end{subfigure}
\begin{subfigure}{.47\linewidth}
\centering
\begin{overpic}[scale=.22, percent]{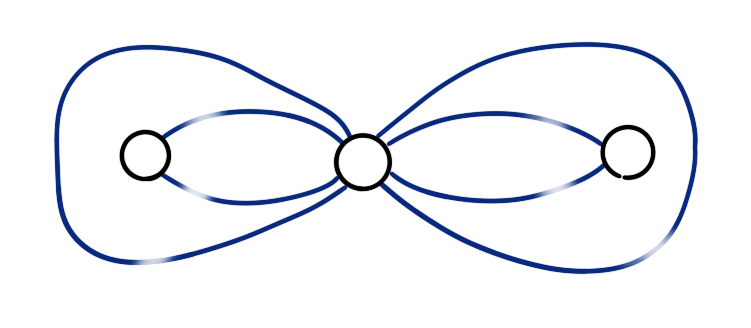}
\put(18,16){\footnotesize $a$}
\put(86,16){\footnotesize $b$}
\put(48,15){\footnotesize $c$}
\put(24,28){\footnotesize $\alpha_1$}
\put(24,15){\footnotesize $\alpha_a$}
\put(18,6.5){\footnotesize $\alpha_b$}
\put(72,28){\footnotesize $\beta_b$}
\put(71,17){\footnotesize $\beta_1$}
\put(86,8){\footnotesize $\beta_a$}
\end{overpic}
\caption{Arcs $\alpha_\ast,\beta_\ast, \ast=1,a,b$ in $P$.}
\label{fig:tau_pattern}
\end{subfigure}
\begin{subfigure}{.47\linewidth}
\centering
\begin{overpic}[scale=.22, percent]{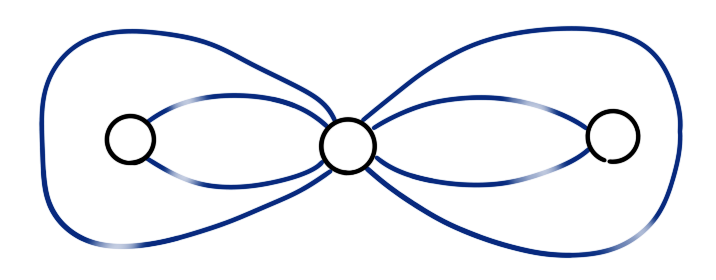}
\put(18,13){\footnotesize $a$}
\put(87,12.5){\footnotesize $b$}
\put(48,11.5){\footnotesize $c$}
\put(24,22){\footnotesize $\gamma_1$}
\put(23.5,11){\footnotesize $\gamma_{1'}$}
\put(12,2){\footnotesize $\gamma_2$}
\put(72,22){\footnotesize $\delta_{1}$}
\put(72,12.5){\footnotesize $\delta_{1'}$}
\put(87.5,3){\footnotesize $\delta_2$}
\end{overpic}
\caption{Arcs $\gamma_\ast,\delta_\ast, \ast=1,1',2$ in $P$.}
\label{fig:rho_pattern}
\end{subfigure}
\begin{subfigure}{.47\linewidth}
\centering
\begin{overpic}[scale=.22, percent]{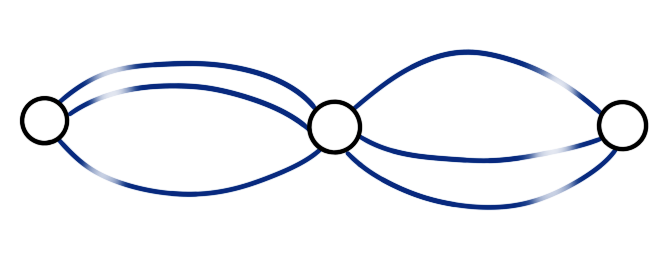}
\put(5,14){\footnotesize $a$}
\put(95,12){\footnotesize $b$}
\put(48,13){\footnotesize $c$}
\put(13,28){\footnotesize $\alpha_1$}
\put(16,23){\footnotesize $\alpha_2$}
\put(12,10){\footnotesize $\alpha_k$}
\put(83,27){\footnotesize $\beta_k$}
\put(82,14){\footnotesize $\beta_2$}
\put(81,8){\footnotesize $\beta_1$}
\end{overpic}
\caption{Arcs $\alpha_1,\beta_1,\cdots,\alpha_n,\beta_n$ in $P$.}
\label{fig:tau_pattern_2}
\end{subfigure}
\begin{subfigure}{.47\linewidth}
\centering
\begin{overpic}[scale=.22, percent]{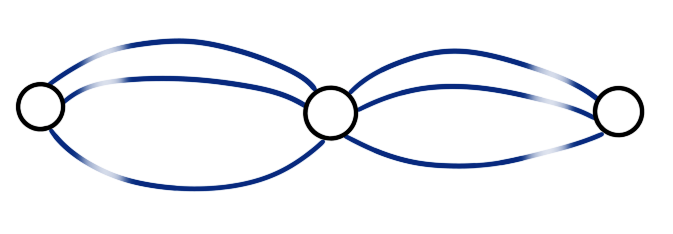}
\put(5,12){\footnotesize $a$}
\put(92,10){\footnotesize $b$}
\put(48,10){\footnotesize $c$}
\put(12,26){\footnotesize $\gamma_1$}
\put(13,21){\footnotesize $\gamma_2$}
\put(12,7){\footnotesize $\gamma_k$}
\put(80,23){\footnotesize $\delta_1$}
\put(80,17){\footnotesize $\delta_2$}
\put(80,9){\footnotesize $\delta_k$}
\end{overpic}
\caption{Arcs $\gamma_1,\delta_1,\cdots,\gamma_k,\delta_k$ in $P$.}
\label{fig:rho_pattern_2}
\end{subfigure} 
\caption{}
\end{figure}

\begin{theorem}\label{teo:taurho_annuli_classification}
Suppose $V$ is atoroidal, and admits a $\tau\rho$-decomposition $(B_1,G_1)\cup_S (B_2,G_2)$ with $(B_1,G_1)$ a $\tau$-tangle and $(B_2,G_2)$ a $\rho$-tangle.
Then $V$ is hyperbolic if $(B_2,G_2)$ is not satellite or cable and does not have a Hopf $\rho$-summand. 

If $(B_2,G_2)$ is satellite or cable, or has a hopf $\rho$-summand, then, up to isotopy, the exterior $\Compl V$ contains
\begin{enumerate}[label=(\roman*)]
    \item\label{itm:taurho_infinite} infinitely many essential annuli if 
    the $\tau\rho$-decomposition is special and $(B_1,G_1)$ has a slope of $\pm\frac{1}{3}$, and $(B_2,G_2)$ is a $(\pm 2,\pm q)$-torus $\rho$-tangle;
    \item\label{itm:taurho_four} four essential annuli if 
    the $\tau\rho$-decomposition is special, and $(B_1,G_1)$ has a slope of $\pm\frac{1}{3}$, and $(B_2,G_2)$ is a $(p,q)$-torus $\rho$-tangle with $p\neq \pm 2$;
    \item\label{itm:taurho:two} two essential annuli if 
    the $\tau\rho$-decomposition is special with $(B_1,G_1)$ having a slope of $\frac{1}{n}$, $n\neq \pm 3$ and $(B_2,G_2)$ being a $(p,q)$-torus $\rho$-tangle with $p\neq \pm 2$;
    \item\label{itm:taurho:one} one essential annulus otherwise.
\end{enumerate} 
\end{theorem}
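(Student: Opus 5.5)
We follow the proof of Theorem \ref{teo:tautau_annuli_classification}. Let $A\subset\Compl V$ be an essential annulus with $\vert A\cap P\vert$ minimal. By Lemma \ref{lm:genus_two_disk}, $V$ is irreducible, so Lemma \ref{lm:annulus} applies. If $A\cap P=\emptyset$, then $A$ is a good annulus in $\Compl{G_1}$ or $\Compl{G_2}$. A $\tau$-tangle exterior admits no good annulus, so $A\subset\Compl{G_2}$, and by Theorem \ref{teo:annuli_rho_tangle}, $(B_2,G_2)$ is satellite, cable, or has a Hopf $\rho$-summand. If $A\cap P\neq\emptyset$, then $P$ cuts $A$ into good rectangles lying alternately in $\Compl{G_1}$ and $\Compl{G_2}$. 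By Theorem \ref{teo:rectangle_rho_tangle}, a good rectangle in $\Compl{G_2}$ forces $(B_2,G_2)$ to be a torus $\rho$-tangle. A torus $\rho$-tangle is satellite: take $N$ to be the neighborhood of the $(p,q)$-curve $\alpha\cup\beta$. Hence if $(B_2,G_2)$ is none of the three types, there are no essential annuli at all, and since $V$ is atoroidal and irreducible, $V$ is hyperbolic. This proves the first assertion.

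Now assume $(B_2,G_2)$ is of one of the three types. The good annulus of Theorem \ref{teo:annuli_rho_tangle} (type I, type II, or Hopf type) is essential in $\Compl V$ by Lemma \ref{lm:annulus}\ref{itm:disjoint}, and it is unique up to isotopy in $\Compl{G_2}-P$. This gives exactly one essential annulus disjoint from $P$. It remains to count the annuli meeting $P$.

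By Theorem \ref{teo:rectangle_tau_tangle}, a good rectangle in $\Compl{G_1}$ has its arcs at the special end $c$. Since in the torus $\rho$-tangle every rectangle has ends on $c$ (case (iii) and (vi) of the proof of Theorem \ref{teo:rectangle_rho_tangle}), annuli meeting $P$ exist only when the decomposition is special, $(B_1,G_1)$ has slope $\pm 1/k$ with $k$ odd, and $(B_2,G_2)$ is a torus $\rho$-tangle. Otherwise we are in case \ref{itm:taurho:one}. In the special case we match arc patterns in $P$. On the $\tau$ side, a type I rectangle contributes an $a$--$c$ arc and a $c$--$b$ arc, and a type II rectangle contributes an arc and a $c$-loop (Fig.\ \ref{fig:tau_pattern}). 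On the $\rho$ side, type I gives $c$--$a$ and $c$--$b$ arcs, type I* gives two $c$-loops, and type II gives two parallel $c$-loops (Fig.\ \ref{fig:rho_pattern}). Gluing families of parallel copies, as in Fig.\ \ref{fig:tau_pattern_2} and \ref{fig:rho_pattern_2}, connectivity of $A$ forces the admissible cycles:
\begin{itemize}
\item $\tau$ type I with $\rho$ type I gives one annulus, which exists whenever both sides are present, yielding case \ref{itm:taurho:two} together with the disjoint annulus;
\item when $k=\pm3$, the type II $\tau$-rectangles combine with $\rho$ type I/I* rectangles to give two further annuli, yielding case \ref{itm:taurho_four};
\item when $p=\pm2$, the $\rho$ type II rectangles, composed with the type II $\tau$-rectangles, produce infinitely many annuli, distinguished by the number of rectangles or by the Dehn twists $\mathbf t^r$. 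This is analogous to $5_2$, and one could compare with \cite{Wan:24i}.
\end{itemize}

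The main obstacle is this combinatorial matching. It requires showing which pairings close up into a single annulus, and that the resulting annuli are pairwise non-isotopic. For the finite cases I would compare the induced arc patterns in $P$ via the minimality of $\vert A\cap P\vert$. For the infinite case I would exhibit a sequence of annuli with growing $\vert A\cap P\vert$ that remains minimal.
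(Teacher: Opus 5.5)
\textbf{Verdict.} Your proposal has a genuine gap: the counts in cases (i) and (ii) are not established, and the route you indicate for them would not suffice.

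\textbf{What matches the paper.} Your argument for the first assertion and for case (iv) is essentially the paper's. You also make explicit a step the paper uses tacitly, namely that a torus $\rho$-tangle is satellite. Your sketch of case (iii) follows the paper too: only type I rectangles can match on the two sides, and connectedness of $A$ forces exactly two on each side, so $A$ is the frontier of the M\"obius band $M$ formed by one type I rectangle from each side.

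\textbf{Case (ii).} You flag the count here as ``the main obstacle'', and it is not resolved. The paper does not enumerate gluing patterns. It builds two essential M\"obius bands:
\[
M_1=R_1\cup Q_1, \qquad M_2=R_a\cup R_b\cup Q_{1'}\cup Q_2 .
\]
Here $M_1$ uses type I rectangles on both sides. $M_2$ glues the two type II $\tau$-rectangles meeting $a$ and $b$ to a parallel copy $Q_{1'}$ of the type I $\rho$-rectangle and a rectangle $Q_2$ whose arcs are loops at $c$. The arc configurations in $P$ show that $M_1\cap M_2$ is a single essential arc. Hence $\rnbhd{M_1\cup M_2}$ is an admissible $I$-bundle over a once-punctured M\"obius band, and \cite[Corollary $1.3$]{Wan:24ii} then gives exactly four essential annuli. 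Without such structural input, your ``two further annuli'' is a guess. You would have to show that every annulus assembled from arbitrary mixtures of type I/II $\tau$-rectangles and type I/I$^*$ $\rho$-rectangles is isotopic to one of three. You would also have to show those three are pairwise non-isotopic. The $I$-bundle theorem supplies exactly this.

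\textbf{Case (i).} Distinguishing annuli ``by the Dehn twists $\mathbf t^r$'' does not work. The map $\mathbf t$ is a homeomorphism of $\Compl{t_\rho}$ that moves $t_1$. So it relates different tangles $(B,G_r)$, i.e.\ different handlebody-knots with homeomorphic exteriors, not different annuli in one fixed exterior.

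Exhibiting annuli with growing $\vert A\cap P\vert$ also falls short as stated. You would have to prove that each such intersection is minimal in its isotopy class. Lemma \ref{lm:annulus}\ref{itm:intersecting} does not provide this, since it assumes minimality.

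The paper uses the twisting only to reduce to a known example. Write $W^\pm_k$ for the handlebody-knot with $n=\pm3$ and $(p,q)=(2,2k+1)$. Then:
\begin{itemize}
\item[(a)] $W^-_0$ is the mirror image of $5_2$;
\item[(b)] $W^-_k$ is obtained from $W^-_0$ by twisting $k$ times along the disk $D$, so all $W^-_k$ have homeomorphic exteriors;
\item[(c)] $W^+_k$ is the mirror image of $W^-_{-k-1}$.
\end{itemize}
Then \cite[Theorem $2.4$]{Wan:24i} gives infinitely many essential annuli.
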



\begin{proof}
Since the exterior of a $\tau$-tangle admits no good annulus, and the exterior of $(B_2,G_2)$ admits a good rectangle or annulus if and only if $(B_2,G_2)$ is satellite or cable or has a Hopf $\rho$-summand by Theorems \ref{teo:rectangle_rho_tangle} and \ref{teo:annuli_rho_tangle}, the first assertion follows.

Suppose $(B_2,G_2)$ is a satellite or cable $\rho$-tangle, or has a Hopf $\rho$-summand, and let $A_0$ be the essential annulus induced by the good annulus in the exterior of $(B_2,G_2)$.
If $(B_1,G_2)\cup_S(B_2,G_2)$ is not special, then $P:=S\cap\Compl V$ does not cut any essential annulus into good rectangles, so $A_0$ is the unique essential annulus in $\Compl V$ by Theorem \ref{teo:annuli_rho_tangle}. Now, if $(B_2,G_2)$ is not a torus $\rho$-tangle, then $(B_2,G_2)$ admits no good rectangle. Similarly, if $(B_1,G_1)$ is not rational with a slope of $\frac{1}{n}$, then $(B_1,G_1)$ admits no good rectangle. In either case, every essential annulus is disjoint from $P$, and hence isotopic to $A_0$ by Theorem \ref{teo:annuli_rho_tangle}. This proves Case \ref{itm:taurho:one}.

Suppose now $(B_1,G_1)\cup_S(B_2,G_2)$ is special, and $(B_1,G_1)$ is rational with a slope of $\frac{1}{n}$, and $(B_2,G_2)$ is a $(p,q)$-torus $\rho$-tangle. It may be assumed that $p>1$.

\textbf{Case \ref{itm:taurho_infinite}:} 
Denote by $W^\pm_k$ the handlebody-knot when $n=\pm 3$ and $(p,q)=(2,2k+1)$. Then $W^-_0$ is equivalent to $5_2$ in the table \cite{IshKisMorSuz:12}, up to mirror image; see Fig.\ \ref{fig:taurho_infinite}, so its exterior admits 
infinitely many essential annuli, up to isotopy. 
The claim then follows from the fact that $W^-_k$ is obtained by performing Dehn twist $k$ times along the disk $D$ in Fig.\ \ref{fig:taurho_infinite} (see also Fig.\ \ref{fig:twisting_disk}), 
and that $W^+_k$ is the mirror image of $W^-_{-k-1}$. 

\textbf{Case \ref{itm:taurho_four}:} 
Denote by $R_a,R_b$ two disjoint type II rectangle in $\Compl {G_1}$ that meet $a,b$, respectively, and by 
$R_1$ a type I rectangle in $\Compl{G_1}$ disjoint from $R_a\cup R_b$. Let $\alpha_\ast,\beta_\ast$ be the two arcs in $R_\ast\cap P$, $\ast=1,a,b$. 
Then $\alpha_\ast,\beta_\ast,\ast=1,a,b$ in $P$ are configured as shown in Fig.\ \ref{fig:tau_pattern}.
 
Similarly, in $\Compl{G_2}$ , let $Q_1,Q_{1'}$ be two disjoint copies of the type I rectangle, and let $Q_2$ be 
a type II rectangle disjoint from $Q_1\cup Q_{1'}$. 
Also, denote by $\gamma_\ast,\delta_\ast$ be the two arcs in $Q_\ast\cap P$, $\ast=1,1',2$. 
Then $\gamma_\ast,\delta_\ast,\ast=1,1',2$, in $P$ are configured as shown in Fig.\ \ref{fig:rho_pattern}.

Aligning $\alpha_1\cup \beta_1$ with $\gamma_1\cup \delta_1$, we obtain an essential M\"obius band $M_1$ in $\Compl V$ given by the union $R_1\cup Q_1$. Likewise, aligning $\alpha_a\cup\beta_a\cup\alpha_b \cup \beta_b$ with 
$\gamma_{1'}\cup \delta_{1'}\cup \gamma_2\cup\delta_2$,
we obtain an essential M\"obius band $M_2$ in $\Compl V$
given by the union $R_a\cup R_b\cup Q_{1'}\cup Q_2$. 

Given the configurations in Figs.\ \ref{fig:tau_pattern}, \ref{fig:rho_pattern}, the two M\"obius band $M_1,M_2$ intersect at one essential arc. Therefore, the regular neighborhood $N$ of $M_1\cup M_2$ admits an admissible I-bundle structure over a once-punctured M\"obius band. The assertion thus follows from \cite[Corollary $1.3$]{Wan:24ii}.

\textbf{Case \ref{itm:taurho:two}:} 
Observe first that the union of a type I rectangle in 
$\Compl {G_1}$ and a type I rectangle in $\Compl{G_2}$ 
induces a M\"obius band $M$; see Fig.\ \ref{fig:example_mobius}. Denote by $A_1$ the frontier of a regular neighborhood of $M$. 

Let $A\subset \Compl V$ be an essential annulus. 
If $A$ can be isotoped away from $P$, then by Theorem \ref{teo:annuli_rho_tangle}, $A$ is isotopic to $A_0$.
Suppose $A$ cannot be isotoped away from $P$. Then
$P$ cuts $A$ into $k$ type I rectangles: $R_1,\cdots, R_k$ in $\Compl{G_1}$, and $k$ type I rectangles: $Q_1,\cdots, Q_k$ in $\Compl{G_2}$. Now, if $\alpha_\ast,\beta_\ast$ (resp.\ $\gamma_\ast,\delta_\ast$) are the arcs in $R_\ast\cap P$ (resp.\ $Q_\ast\cap P$), $\ast=1,\cdots, k$. Then the configuration of $\alpha_\ast,\beta_\ast$ (resp.\ $\gamma_\ast,\delta_\ast$), $\ast=1,\cdots, k$, is shown in Fig.\ \ref{fig:tau_pattern_2} (resp.\ Fig.\ \ref{fig:rho_pattern_2}). By the connectedness of $A$, we have $k=2$, and hence $A$ is isotopic to $A_1$. 
\end{proof}

\begin{theorem}\label{teo:rhorho_annuli_classification}
Suppose $V$ is atoroidal and admits a $\rho\rho$-decomposition $(B_1,G_1)\cup_S (B_2,G_2)$.
Then the exterior $\Compl V$ admits, up to isotopy,
\begin{enumerate}[label=(\roman*)]
    \item\label{itm:rhorho_two} two essential annuli if $(B_i,G_i)$ is a satellite or cable $\rho$-tangle or has a hopf $\rho$-summand, for each $i=1,2$;
    \item one essential annulus if only one of $(B_i,G_i)$, $i=1,2$, is a satellite or cable $\rho$-tangle or has a hopf $\rho$-summand;
\end{enumerate}
otherwise, $V$ is hyperbolic.      
\end{theorem}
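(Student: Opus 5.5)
Following the proofs of Theorems \ref{teo:tautau_annuli_classification} and \ref{teo:taurho_annuli_classification}, I would first use Lemma \ref{lm:genus_two_disk} (irreducibility) and Lemma \ref{lm:annulus}. With $\vert A\cap P\vert$ minimized, every essential annulus $A\subset\Compl V$ is then either a good annulus in some $\Compl{G_i}$ or a union of good rectangles lying alternately in $\Compl{G_1}$ and $\Compl{G_2}$. The goal is to show that the second alternative never occurs in a $\rho\rho$-decomposition. The statement then reduces to Theorem \ref{teo:annuli_rho_tangle} applied to each side separately.

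\textbf{Step 1: no essential annulus meets $P$.} In each $\rho$-tangle, label the boundary circles so that $a,b$ lie on the torus component $T_\rho$ of $F$ and $c$ is the circle dual to $\partial t_\rho$. The proof of Theorem \ref{teo:rectangle_rho_tangle} shows that the only admissible configurations are (iii), in which an arc runs from $c$ to $a$ and an arc from $c$ to $b$, and (vi), in which both arcs join $c$ to itself. In either case every good rectangle is disjoint from the annulus component $A_1$ of $F$, so each arc of $R\cap P$ avoids the boundary circle of $P$ lying on $A_1$.

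In a $\rho\rho$-decomposition the annulus $A_1^{(1)}$ of $G_1$ meets $P$ in one circle, call it $c_1$. The key point is to check how the three circles of $P$ are distributed. The annulus of $G_1$ joins the single end of $t_1^{(1)}$ to the loop of $G_2$ via an arc, and the gluing of spines pairs the ends across $S$. Since the spine is a handcuff graph, the arc component of each $G_i$ connects to the loop-with-tail of the other. I therefore expect the circle of $P$ on $A_1^{(1)}$ to lie on $T_\rho^{(2)}$, and the circle on $A_1^{(2)}$ to lie on $T_\rho^{(1)}$.

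Now take a rectangle $Q\subset\Compl{G_2}$ adjacent to some $R\subset\Compl{G_1}$. Its arcs are exactly the arcs of $R\cap P$, so they avoid $c_1$. Configuration (iii) for $Q$ is also excluded, because it needs endpoints on both circles $a,b$ of $T^{(2)}_\rho$, and $c_1$ is one of them. Hence both $R$ and $Q$ must be of configuration (vi) type relative to their own tangles. That forces all arcs to have both endpoints on a single circle that serves as the dual circle $c$ for both tangles. This contradicts the fact that the dual circles of the two tangles are distinct, since they are $\partial$ of the tails on opposite sides.

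I expect this bookkeeping of which circle of $P$ plays which role on each side to be the main obstacle. I would settle it by drawing the handcuff spine explicitly. If instead some rectangle configurations turn out to be compatible, I would fall back on the pattern argument used in Case \ref{itm:taurho:two} of Theorem \ref{teo:taurho_annuli_classification}: arrange the arcs of the rectangle types in $P$ and use connectedness to show that no closed annulus can be assembled.

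\textbf{Step 2: counting.} Once Step 1 holds, every essential annulus is isotopic to a good annulus in $\Compl{G_1}$ or in $\Compl{G_2}$. Conversely, by Lemma \ref{lm:annulus}\ref{itm:disjoint}, every good annulus is essential in $\Compl V$. By Theorem \ref{teo:annuli_rho_tangle}, $\Compl{G_i}$ contains a good annulus exactly when $(B_i,G_i)$ is satellite, cable, or has a Hopf $\rho$-summand, and in that case the good annulus is unique up to isotopy in $\Compl{G_i}-P$. So each side contributes at most one annulus.

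It remains to check that annuli coming from different sides are not isotopic in $\Compl V$. An isotopy between them could be made disjoint from the incompressible, $\partial$-incompressible surface $P$. The two annuli would then cobound a product region crossing $P$, which is impossible since they lie in different components of $\Compl V-P$. This yields the counts two, one, and zero. In the zero case, $V$ is hyperbolic by Lemmas \ref{lm:genus_two_disk} and \ref{lm:genus_two_torus} together with Thurston's theorem.
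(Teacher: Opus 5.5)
\textbf{There is a genuine gap in Step 1.} Your overall reduction is the same as the paper's: Lemma \ref{lm:annulus} splits essential annuli into those disjoint from $P$ and those glued from good rectangles, and Theorem \ref{teo:annuli_rho_tangle} handles each side. But the argument you give for Step 1 has the boundary pattern of a $\rho$-tangle backwards.
\begin{itemize}
\item $T_\rho$ is a once-punctured torus, so it has a single boundary circle: the dual circle $c$ at the end of the tail of $t_\rho$.
\item The circles $a,b$ are the two boundary circles of the annulus $A_1$ around the arc $t_1$. Hence $A_1^{(1)}$ meets $P$ in two circles, not one.
\item Good rectangles do not avoid $A_1$. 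In configuration (iii), i.e.\ for type I rectangles, one arc of $R\cap F$ is a spanning arc of $A_1$ from $a$ to $b$.
\end{itemize}
So your exclusion of configuration (iii) for $Q$, and the conclusion that $R$ and $Q$ must both be of type (vi), have no basis. The fallback you mention is only a plan. Step 1, which carries all the content of the theorem, is therefore not proved.

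\textbf{The missing observation.} This is what lies behind the paper's one-line proof. In both admissible configurations (iii) and (vi), every arc of $R\cap P$ has an endpoint on $c$, the special end. Connectedness of the handcuff spine forces the two tails to end at different points of $S$; otherwise $t_\rho^{(1)}\cup t_\rho^{(2)}$ would be a component. So the decomposition is not special.

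Concretely, write $X=c^{(1)}$, $Y=c^{(2)}$, and $Z$ for the third circle of $\partial P$. The gluing gives $\partial A_1^{(1)}=Y\cup Z$ and $\partial A_1^{(2)}=X\cup Z$.
\begin{itemize}
\item A good rectangle in $\Compl{G_1}$ has arcs of types $X$--$Y$ and $X$--$Z$, or $X$--$X$ twice.
\item A good rectangle in $\Compl{G_2}$ has arcs of types $Y$--$X$ and $Y$--$Z$, or $Y$--$Y$ twice.
\end{itemize}
Each arc of $A\cap P$ is an arc of a good rectangle on each side, so only $X$--$Y$ arcs could occur. But every rectangle in $\Compl{G_1}$ has an arc of another type. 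Hence $A\cap P=\emptyset$ after minimizing, which is exactly what you need.

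\textbf{A smaller point in Step 2.} The claim that ``an isotopy could be made disjoint from $P$'' is not justified as stated. A simpler argument: $X,Y,Z$ are parallel separating curves that cut $\partial V$ into $T_\rho^{(1)}$, two annuli, and $T_\rho^{(2)}$. A good annulus from side $i$ has a boundary curve that is essential and non-peripheral in $T_\rho^{(i)}$. Therefore annuli from different sides have non-isotopic boundaries, so they are not isotopic.
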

\begin{proof}
Since $V$ is connected, the $\rho\rho$-decomposition cannot be special, and hence every essential annulus $A\subset\Compl V$ can be isotoped so $A$ is disjoint from $P$. The assertion thus follows from Theorem \ref{teo:annuli_rho_tangle}. 
\end{proof}

\begin{corollary}\label{cor:hyperbolicity}
The handlebody-knots $5_3,6_2,6_3,6_5,6_6,6_7,6_9$ in the handlebody-knot table are hyperbolic.   
\end{corollary}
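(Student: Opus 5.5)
The plan is to treat each handlebody-knot $5_3,6_2,6_3,6_5,6_6,6_7,6_9$ separately. For each one I will exhibit a spine $\Gamma$ from the diagram in \cite{IshKisMorSuz:12} together with a $2$-sphere $S$ that meets $\Gamma$ in three points. This gives a $3$-tangle decomposition $(B_1,G_1)\cup_S(B_2,G_2)$. The next step is to identify each $(B_i,G_i)$ explicitly as a $\tau$- or $\rho$-tangle, reading it off a picture as in Fig.\ \ref{fig:three_decomposable_six_crossing}. Typically one side is a small rational tangle and the other is a tangle obtained from a knotted arc or a clasp. The aim is to choose the sphere so that the classification theorems give hyperbolicity directly, rather than one of the cylindrical cases.

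First I must check that the decomposition is a genuine $3$-decomposition and that $V$ is atoroidal. For atoroidality, Lemma \ref{lm:genus_two_torus} reduces the question to showing that each tangle $(B_i,G_i)$ is atoroidal. For the small tangles that appear, I will argue this directly. A rational tangle exterior is a genus-two handlebody, so it contains no essential torus. For the remaining tangles, the relevant arc or knot exteriors, such as $\Compl{t_\rho}$ or the $2$-string tangle $(B_R,G_R)$, are trivial, rational, or a simple prime tangle such as a trefoil-type arc; their only incompressible tori would come from satellite structure, which visibly does not occur. Once atoroidality is established, Lemma \ref{lm:atoroidal_essentiality} gives essentiality: it suffices to note that neither tangle is trivial and neither is a Hopf $\rho$-tangle. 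Irreducibility then follows from Lemma \ref{lm:genus_two_disk}.

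Next comes hyperbolicity, treated by type of decomposition.
\begin{itemize}
\item For a $\tau\tau$-decomposition, I apply Theorem \ref{teo:tautau_annuli_classification}. I check one of two things: either the decomposition is not special, or one of the $\tau$-tangles is not rational with slope $\frac{1}{n}$. For example, it may be rational with slope $\frac{2}{5}$, or it may be non-rational. Either way $V$ is hyperbolic.
\item For a $\tau\rho$-decomposition, I apply Theorem \ref{teo:taurho_annuli_classification}. Here I must show that the $\rho$-tangle is not satellite, not cable, and has no Hopf $\rho$-summand. Some facts I expect to use: if $\Compl{t_\rho}$ is a nontrivial torus knot exterior, the tangle is not cable, provided $t_1$ meets the cabling annulus essentially. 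If $t_\rho$ is trivial, I must exclude satellite and Hopf-summand structures. To do this I use linking or wrapping data of $t_1$ around $t_\rho$, for instance wrapping number at least $2$ with winding number not $\pm$ wrapping number, or an algebraic count obstructing a disk that meets $t_1$ once.
\item For a $\rho\rho$-decomposition, Theorem \ref{teo:rhorho_annuli_classification} requires the same verification on both sides.
\end{itemize}

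The main obstacle is this last verification for $\rho$-tangles, namely ruling out the satellite (type I annulus) structure. That structure is not directly visible in a diagram. My plan is to use the classification in Theorem \ref{teo:annuli_rho_tangle}. A satellite structure with trivial $t_\rho$ forces $t_1$ to lie in a solid torus neighborhood of a $(p,q)$-curve with $|p|>1$. I would obstruct this by computing the knot type obtained by closing $t_1$ along $P$, or by comparing the linking number of $t_1$ with $t_\rho$ against the constraint imposed by the cable pattern. Where possible, I will instead choose the decomposing sphere so that the decomposition is $\tau\tau$ and non-special, which avoids the issue entirely. I expect this choice to work for most of the seven knots.
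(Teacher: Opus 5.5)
Your framework is the paper's. There, every tangle in the exhibited decompositions is rational, so your handlebody remark plus Lemma~\ref{lm:genus_two_torus} gives atoroidality and Lemma~\ref{lm:atoroidal_essentiality} gives essentiality. The knots $6_5,6_6$ have non-special $\tau\tau$-decompositions, and $5_3,6_2,6_3,6_7$ have special ones in which one $\tau$-tangle has slope not of the form $\frac1n$; Theorem~\ref{teo:tautau_annuli_classification} finishes these. For $6_9$, however, the paper uses a $\tau\rho$-decomposition whose $\rho$-tangle is rational of slope $-\frac38$. So the step you call the main obstacle is genuinely needed, and the tools you list for it do not settle it.

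Close $t_1$ by an arc outside $B$; together with the loop of $t_\rho$ this gives the $2$-bridge link $b(8,3)$, the Whitehead link up to mirror image. Rationality makes $t_\rho$ unknotted, so the tangle is not cable. Linking number $0$ excludes a Hopf $\rho$-summand, since such a summand would give a spanning disk of the loop meeting $t_1$ once.

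Against a satellite structure along a $(p,q)$-curve with $|p|\ge 2$, however, your obstructions are vacuous here:
\begin{enumerate}[label=(\alph*)]
\item the constraint $\mathrm{lk}=p\,w$ holds with winding number $w=0$ inside $N$;
\item the closed-up $t_1$ is unknotted, and so is the core of $N$ when $|q|=1$;
\item ``winding $\neq\pm$ wrapping'' only rules out torus $\rho$-tangles, not satellite ones. A Whitehead-type pattern in $N$ with $|p|=2$ has wrapping number $4$ and winding number $0$.
\end{enumerate}

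One argument that works: extend $N$ across the outer ball. Its boundary is then an essential torus in the complement of the closure link, unless the closed-up $t_1$ is a core of it. In that case the link is the loop of $t_\rho$ together with a $(p,q)$-curve on the boundary of its neighborhood, which is Seifert fibered. Either way this contradicts the hyperbolicity of the Whitehead link.

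Alternatively, use wrapping number $2$ and winding number $0$ together with Schubert's multiplicativity of wrapping numbers. This forces $|p|=2$ and wrapping number $1$ in $N$, hence total winding number $\pm 2\neq 0$, a contradiction.
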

\begin{proof}
Except that $6_9$ admits a $\tau\rho$-decomposition, any other handlebody-knot has a $\tau\tau$-decomposition as shown in Fig.\ \ref{fig:three_decomposable_six_crossing}. 
In addition, every $\tau$- or $\rho$-tangle in the decomposition is rational. Thus, by Lemma \ref{lm:genus_two_torus}, they are all atoroidal. 
For $6_5,6_6$, the $\tau\tau$-decompositions are not special; see Figs.\ \ref{fig:sixfive}, \ref{fig:sixsix}, while for the others, the $\tau\tau$-decomposition is special, but one of the $\tau$-tangle is not rational with a slope of $\frac{1}{n}$,
$n\in\mathbb Z$; see Figs.\ \ref{fig:fivethree}, \ref{fig:sixtwo}, \ref{fig:sixthree}, \ref{fig:sixseven}. Their hyperbolicity thus follows from Theorem \ref{teo:tautau_annuli_classification}. 
The $\rho$-tangle in the $\tau\rho$-decomposition of $6_9$ is rational with a slope of $-\frac{3}{8}$.
Therefore it neither is satellite or cable nor has a hopf $\rho$-summand, and hence $6_9$ is hyperbolic by Theorem \ref{teo:taurho_annuli_classification}. 
\end{proof}

\begin{figure}
\begin{subfigure}{.25\linewidth}
\centering
\includegraphics[scale=.12]{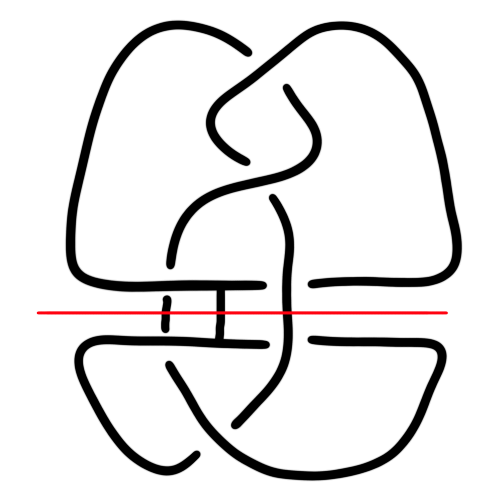}
\caption{$5_3$.}
\label{fig:fivethree}
\end{subfigure}
\begin{subfigure}{.25\linewidth}
\centering
\includegraphics[scale=.12]{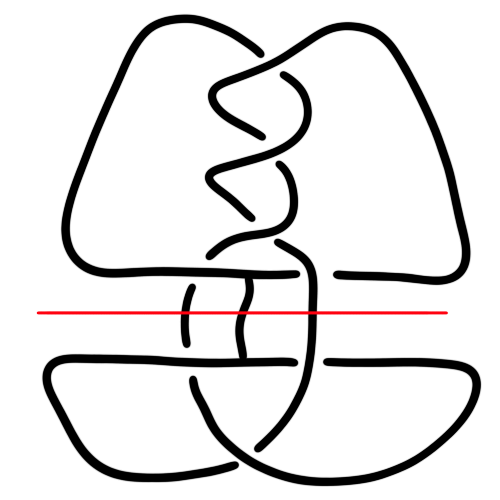}
\caption{$6_3$.}
\label{fig:sixthree}
\end{subfigure}%
\begin{subfigure}{.25\linewidth}
\centering
\includegraphics[scale=.11]{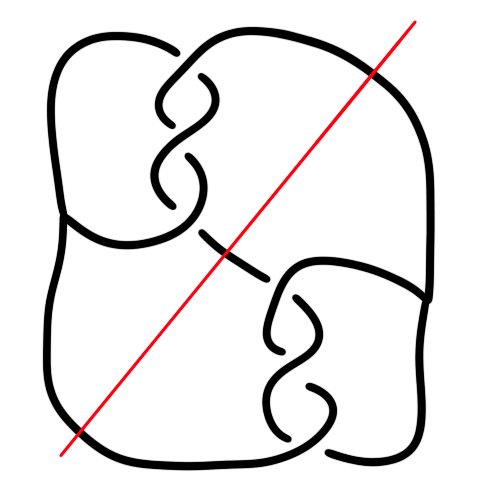}
\caption{$6_5$.}
\label{fig:sixfive}
\end{subfigure}
\begin{subfigure}{.25\linewidth}
\centering
\includegraphics[scale=.11]{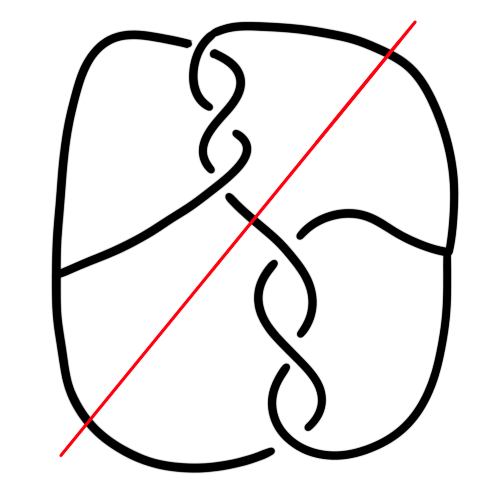}
\caption{$6_6$.}
\label{fig:sixsix}
\end{subfigure}
\begin{subfigure}{.25\linewidth}
\centering
\includegraphics[scale=.12]{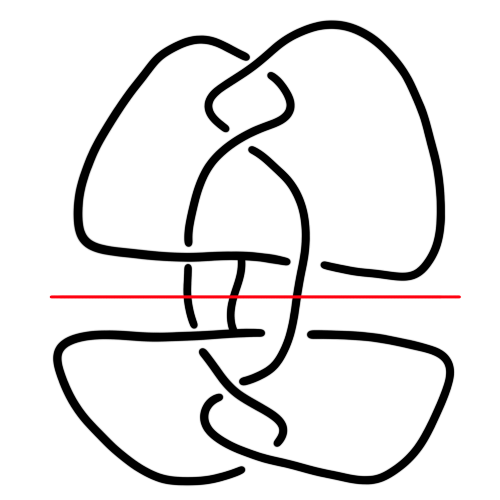}
\caption{$6_7$.}
\label{fig:sixseven}
\end{subfigure}
\begin{subfigure}{.25\linewidth}
\centering
\includegraphics[scale=.12]{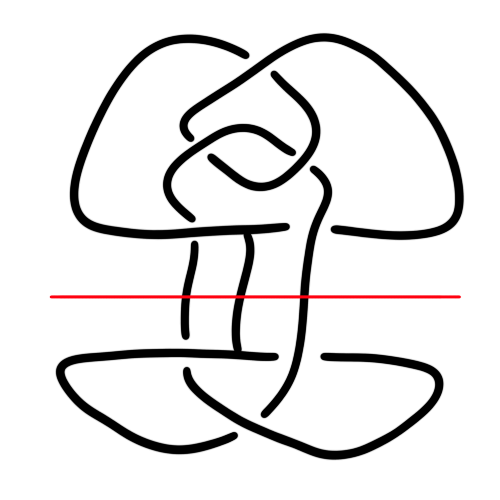}
\caption{$6_9$.}
\label{fig:sixnine}
\end{subfigure}
\begin{subfigure}{.7\linewidth}
\centering
\includegraphics[scale=.125]{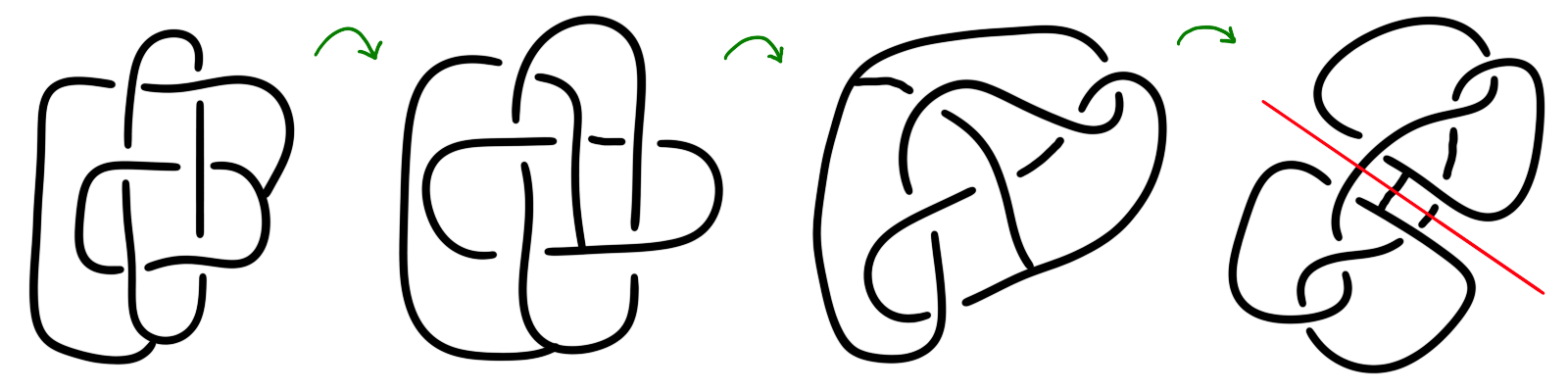}
\caption{$6_2$.}
\label{fig:sixtwo}
\end{subfigure}
\caption{$3$-decomposable handlebody-knots of five or six crossings.}
\label{fig:three_decomposable_six_crossing}
\end{figure}    
\begin{figure}
\begin{subfigure}{.2\linewidth}
\centering
\includegraphics[scale=.12]{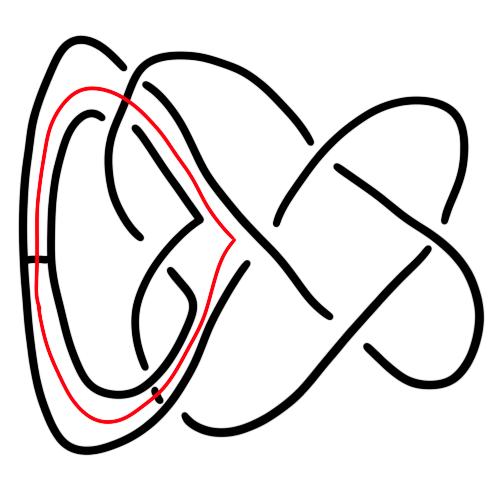}
\caption{$7_{17}$.}
\label{fig:sevenseventeen}
\end{subfigure}
\begin{subfigure}{.2\linewidth}
\centering
\includegraphics[scale=.12]{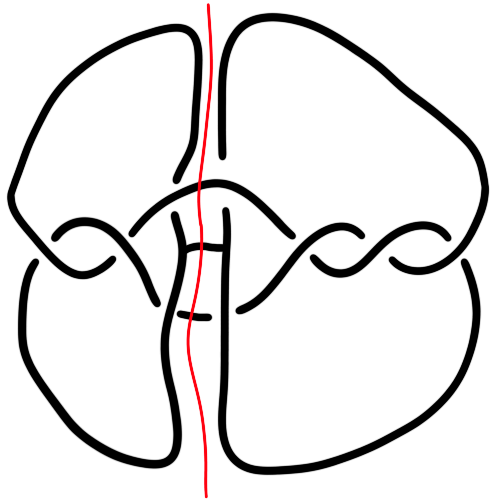}
\caption{$7_{18}$.}
\label{fig:seveneighteen}
\end{subfigure}%
\begin{subfigure}{.2\linewidth}
\centering
\includegraphics[scale=.12]{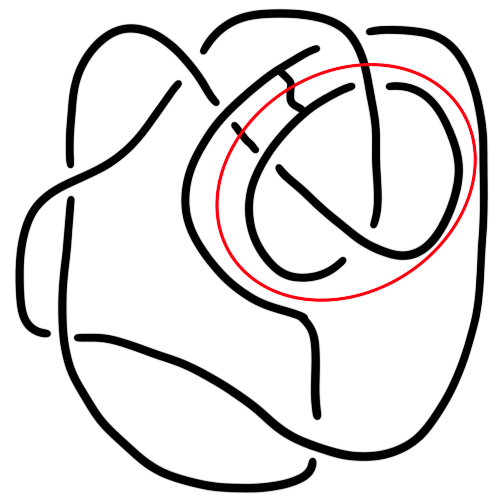}
\caption{$7_{21}$.}
\label{fig:seventwentyone}
\end{subfigure}
\begin{subfigure}{.2\linewidth}
\centering
\includegraphics[scale=.12]{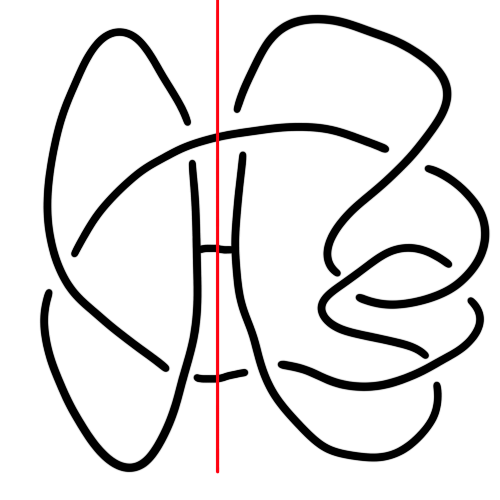}
\caption{$7_{23}$.}
\label{fig:seventwentythree}
\end{subfigure}
%
\begin{subfigure}{.2\linewidth}
\centering
\includegraphics[scale=.11]{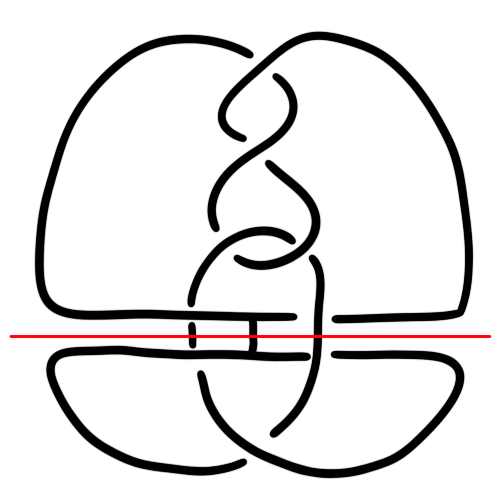}
\caption{$7_{26}$.}
\label{fig:seventwentysix}
\end{subfigure}
\begin{subfigure}{.2\linewidth}
\centering
\includegraphics[scale=.125]{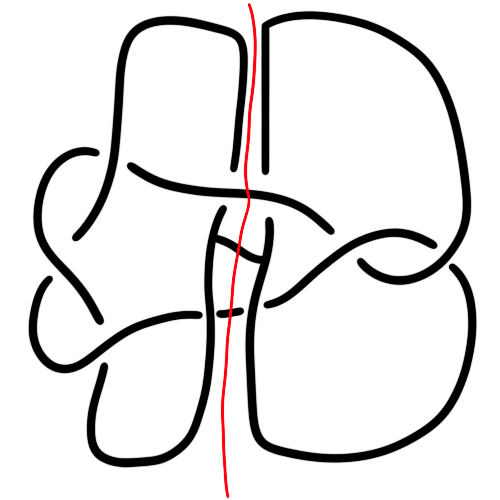}
\caption{$7_{27}$.}
\label{fig:seventwentyseven}
\end{subfigure}
\begin{subfigure}{.2\linewidth}
\centering
\includegraphics[scale=.11]{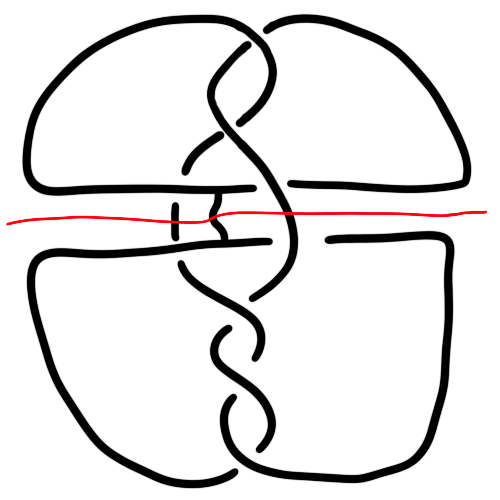}
\caption{$7_{33}$.}
\label{fig:seventhirdythree}
\end{subfigure}
\begin{subfigure}{.2\linewidth}
\centering
\includegraphics[scale=.11]{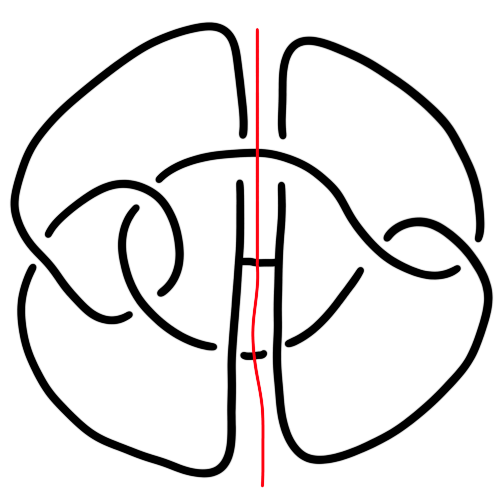}
\caption{$7_{37}$.}
\label{fig:seventhirdyseven}
\end{subfigure}
\begin{subfigure}{.2\linewidth}
\centering
\includegraphics[scale=.11]{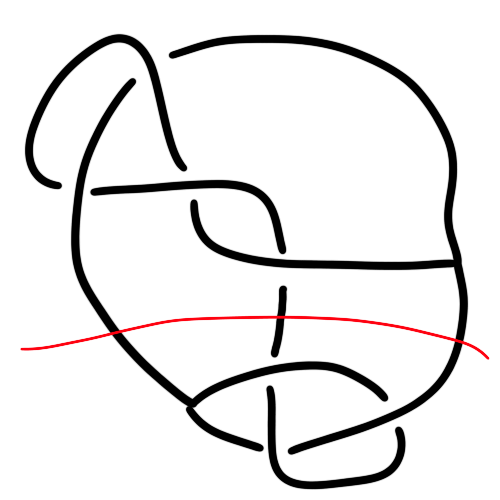}
\caption{$7_{57}$.}
\label{fig:sevenfiftyseven}
\end{subfigure}
\begin{subfigure}{.2\linewidth}
\centering
\includegraphics[scale=.11]{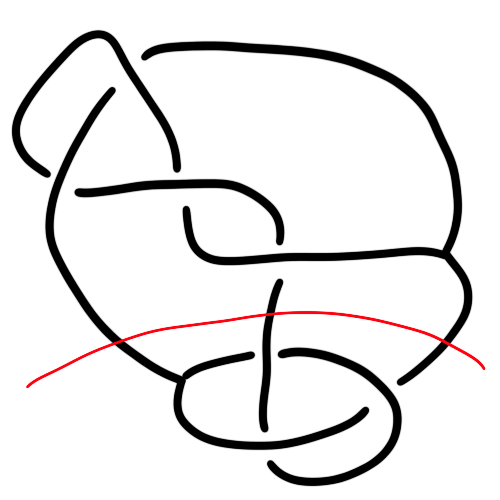}
\caption{$7_{58}$.}
\label{fig:sevenfiftyeight}
\end{subfigure}
\caption{$3$-decomposable handlebody-knots of seven crossings.}
\label{fig:three_decomposable_seven_crossing}
\end{figure}
 
Except for $6_8$, all genus two handlebody-knots up to six crossings other than the ones in Corollary \ref{cor:hyperbolicity} are known to be non-hyperbolic. The hyperbolicity of $6_8$ is determined in Section \ref{sec:sixeight}. In the same vein, we 

\begin{corollary}\label{cor:hyperbolicity_seven}
The handlebody-knots $7_{17}, 7_{18}, 7_{21}, 7_{23}, 7_{26}, 7_{27}, 7_{33}, 7_{37},7_{57},7_{58}$ of seven crossings in the table \cite{BelPaoPaoWan:25} are hyperbolic.    
\end{corollary}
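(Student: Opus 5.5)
The plan mirrors the proof of Corollary \ref{cor:hyperbolicity}. For each of the ten handlebody-knots we read off from Fig.\ \ref{fig:three_decomposable_seven_crossing} a spine with a $3$-tangle decomposition $(B_1,G_1)\cup_S(B_2,G_2)$. We then check four things in turn.

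\textbf{Step 1: the tangles are rational.} Each tangle is a $\tau$- or $\rho$-tangle that is rational, i.e.\ a trivial $\tau$-tangle glued to a rational $2$-string tangle. We compute its slope $[a_1,\dots,a_n,0]$ modulo $\mathbb Z$, normalized to $(-\tfrac12,\tfrac12]$.

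\textbf{Step 2: the decomposition is a genuine $3$-decomposition.} Rational tangles have atoroidal exteriors, since the exterior of $(B_R,G_R)$ is a genus two handlebody. Hence both tangles are atoroidal, and by Lemma \ref{lm:genus_two_torus} the handlebody-knot $V$ is atoroidal. If no slope is $0$ or $\tfrac12$ (trivial or Hopf), Lemma \ref{lm:atoroidal_essentiality} shows both tangles are essential. Lemma \ref{lm:essentiality} then shows the decomposition is a $3$-decomposition. By Lemma \ref{lm:genus_two_disk}, $V$ is irreducible.

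\textbf{Step 3: rule out essential annuli.} This depends on the decomposition type.
\begin{itemize}
\item For a $\tau\tau$-decomposition, we invoke the final clause of Theorem \ref{teo:tautau_annuli_classification}. It suffices to show that either the decomposition is not special, meaning the special ends do not meet in $S$, or that one of the two $\tau$-tangles has slope not of the form $\tfrac1n$.
\item For a $\tau\rho$-decomposition, we invoke the first assertion of Theorem \ref{teo:taurho_annuli_classification}. We must show the rational $\rho$-tangle is neither satellite nor cable and has no Hopf $\rho$-summand.
\begin{itemize}
\item Rationality makes $t_\rho$ trivial, so it is not cable.
\item If the slope is $\neq \tfrac12$, there is no Hopf summand: an annulus of Hopf type would give an essential disk in $\Compl{t_\rho}$ meeting $t_1$ once, forcing slope $\tfrac12$.
\item To exclude the satellite case, we argue that a type I good annulus would make $t_1$ a nontrivial satellite-type arc inside a solid torus neighbourhood. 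For a rational tangle, $t_1$ cobounds a disk with an arc in $\partial B_R$, which should contradict $t_1$ meeting every meridian disk of $N$ when $|p|>1$.
\end{itemize}
\item For a $\rho\rho$-decomposition, we apply Theorem \ref{teo:rhorho_annuli_classification}, with the same check on both $\rho$-tangles.
\end{itemize}

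\textbf{Step 4: conclude.} With no essential disk, torus or annulus, Thurston's theorem gives hyperbolicity.

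The main obstacle is Step 1: correctly reading the tangle diagrams and computing the slopes and special ends from the figures. The key point is exhibiting, for each knot, a decomposition that is not special or has a slope outside $\{\tfrac1n\}$, for example $\pm\tfrac25$ or $-\tfrac38$ as with $6_9$. The satellite exclusion for rational $\rho$-tangles is the one conceptual point. If it proves delicate, I would instead choose $\tau\tau$-decompositions wherever the figures allow.
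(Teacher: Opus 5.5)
\textbf{Where your outline agrees with the paper.} The eight $\tau\tau$ cases go exactly as in the paper: rational tangles have handlebody exteriors, and then the last clause of Theorem \ref{teo:tautau_annuli_classification} applies. For $7_{26}$ and $7_{37}$ the paper also applies the first assertion of Theorem \ref{teo:taurho_annuli_classification}, to rational $\rho$-tangles of slopes $-\frac{3}{10}$ and $-\frac{3}{8}$.

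\textbf{The gap: your satellite exclusion.} Your argument uses only rationality, so it would show that no rational $\rho$-tangle is satellite, and that is false. A rational $\rho$-tangle of slope $\frac{1}{2k}$ with $\vert k\vert\geq 2$ is a $(k,\pm1)$-torus $\rho$-tangle (Section \ref{subsec:torus_rho_tangle}). Let $N\supset P$ be a regular neighbourhood of the $(k,\pm1)$-curve $\alpha\cup\beta$. Its frontier is a type I good annulus. Indeed, $t_1\cup\alpha$ is isotopic to the core of $N$, so $t_1$ meets every meridian disk of $N$ disjoint from $P$. A concrete instance is the $(2,1)$-torus $\rho$-tangle that produces $5_2$ in case \ref{itm:taurho_infinite} of Theorem \ref{teo:taurho_annuli_classification}.

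The contradiction you hoped for is absent for a specific reason. The parallelism disk of $t_1$ in $B_R$ crosses $D$. Once it is extended by copies of $d_\pm,d_0,d_h$, it makes $t_1$ parallel to an arc of $\partial\Compl{t_\rho}$ that runs over $T_\rho$. That is exactly what allows $t_1$ to be essential in $N$.

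Your fallback is not reliable either. The paper exhibits $7_{26}$ and $7_{37}$ with $\tau\rho$-decompositions, and nothing guarantees that a $\tau\tau$ one exists.

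\textbf{How to close the gap.} The exclusion has to use the actual slopes. One way goes as follows.
\begin{enumerate}
\item Let $K$ be the loop of $t_\rho$, and let $\hat t_1$ be $t_1$ together with an arc in $P$. Since $(B_R,G_R)$ is rational, $K\cup\hat t_1$ is a two-bridge link. Moreover, $\Compl{t_\rho}$ is homeomorphic to the exterior of $K$ in $S^3$.
\item Suppose the tangle were satellite, with $(p,q)$-curve $\lambda$ and neighbourhood $N$. Push $N$ into the interior of $\Compl{t_\rho}$. This gives a torus in the link exterior.
\item This torus is incompressible on the $\hat t_1$ side, because $t_1$ meets every meridian disk of $N$. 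It is incompressible on the $K$ side, and not parallel to $\partial\rnbhd{K}$, because $\lambda$ winds $\vert p\vert\geq 2$ times around $K$.
\item Two-bridge links are atoroidal, so $\hat t_1$ is a core of $N$. Their components are unknotted, so $\lambda$ is a $(p,\pm1)$-curve and the link is $T(2,2p)$.
\item By Schubert's classification, this forces the slope to be $\pm\frac{1}{2p}$. That excludes $-\frac{3}{10}$ and $-\frac38$.
\end{enumerate}

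The same link also supplies the justification missing from your Hopf step. A disk bounded by $K$ that meets $\hat t_1$ once makes the link a connected sum of a Hopf link with a knot. By primeness of two-bridge links it is the Hopf link, so the slope is $\frac12$.
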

\begin{proof}
Except that $7_{26}, 7_{37}$ admit a $\tau\rho$-decomposition (see Figs.\ \ref{fig:seventwentysix}, \ref{fig:seventhirdyseven}), any other handlebody-knot has a $\tau\tau$-decomposition as shown in Fig.\ \ref{fig:three_decomposable_seven_crossing}. 
Since every $\tau$- or $\rho$-tangle in the decomposition is rational, by Lemma \ref{lm:genus_two_torus}, they are all atoroidal.
 
In the $\tau\tau$-decomposition of $7_{17}, 7_{18}, 7_{21}, 7_{23}, 7_{27}, 7_{33},7_{57}$, or $7_{58}$, at least one of the $\tau$-tangles is not rational with a slope of $\frac{1}{n}$, $n\in\mathbb{Z}$, so, by Theorem \ref{teo:tautau_annuli_classification}, they are hyperbolic. 
 
For $7_{26}$, the $\rho$-tangle in the $\tau\rho$-decomposition is rational with a slope of $-\frac{3}{10}$, and for $7_{37}$, it is rational with a slope of $-\frac{3}{8}$. Thus, in either case, the $\rho$-tangle is not satellite or cable and does not have a Hopf $\rho$-summand, and their hyperbolicity follows from Theorem \ref{teo:taurho_annuli_classification}. 
\end{proof}

\subsection*{Handlebody-knots that are not $3$-decomposable}
The classification of essential annuli in Theorems \ref{teo:tautau_annuli_classification}, \ref{teo:taurho_annuli_classification}, and \ref{teo:rhorho_annuli_classification} allows us to construct genus two handlebody-knots that are not $3$-decomposable. Recall that by \cite{Wan:24ii}, the exterior of an atoroidal genus two handlebody-knot admits at most two non-separating essential annuli, up to isotopy. Recall from Koda-Ozawa \cite{KodOzaGor:15} also that an essential annulus $A$ in a genus two handlebody-knot exterior is called \emph{type $2$} if exactly one component of $\partial A$ is meridional.

\begin{corollary}\label{cor:two_non_sep}
If the exterior of an atoroidal $3$-decomposable genus two handlebody-knot admits two non-separating essential annuli, up to isotopy, then both annuli are of type $2$. 
\end{corollary}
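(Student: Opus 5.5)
We go through the classification theorems and identify where two non-isotopic non-separating essential annuli can occur.

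Let $V$ be atoroidal with a $3$-decomposition $(B_1,G_1)\cup_S(B_2,G_2)$, and suppose $\Compl V$ contains two non-isotopic non-separating essential annuli. Theorems \ref{teo:tautau_annuli_classification}, \ref{teo:taurho_annuli_classification} and \ref{teo:rhorho_annuli_classification} leave only the following cases with at least two isotopy classes:
\begin{itemize}
\item the $\tau\tau$ cases \ref{itm:tautau_infinite} and \ref{itm:tautau_three}, where $V$ is equivalent to $5_2$ or $4_1$ (up to mirror image);
\item the $\tau\rho$ cases \ref{itm:taurho_infinite}, \ref{itm:taurho_four} and \ref{itm:taurho:two};
\item the $\rho\rho$ case \ref{itm:rhorho_two}.
\end{itemize}
The plan is to treat each case and show that either there are not two non-separating annuli, or every non-separating one is of type $2$.

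\textbf{Disjoint-from-$P$ annuli.} We first show these are separating or of type $2$. An annulus disjoint from $P$ comes from a good annulus in a $\rho$-tangle exterior. By Theorem \ref{teo:annuli_rho_tangle} it is of type I, type II or Hopf type.
\begin{itemize}
\item Type I and type II annuli have both boundary components in the once-punctured torus $T_\rho$. Together with an annulus in $T_\rho$ they bound a solid torus or a cable space, so the annulus separates $\Compl V$.
\item A Hopf-type annulus has one boundary component meridional in $A_1$, i.e.\ bounding a meridian disk of $\rnbhd{t_1}$. The other boundary component is the boundary of an essential disk of $\Compl{t_\rho}$, a longitude of $T_\rho$. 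That curve is non-meridional in $V$, since a meridian disk would make $V$ reducible, contradicting Lemma \ref{lm:genus_two_disk}. Hence a Hopf-type annulus is of type $2$.
\end{itemize}
This settles the $\rho\rho$ case \ref{itm:rhorho_two} completely, since there every essential annulus is disjoint from $P$.

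\textbf{Annuli meeting $P$.} Next consider the annuli made of good rectangles. In the $\tau\rho$ case \ref{itm:taurho:two}, the second annulus is $A_1$, the frontier of a regular neighborhood of a M\"obius band $M$. Such a frontier is separating: it cuts off the twisted $I$-bundle over $M$. So at most one annulus in this case is non-separating, namely $A_0$, and the hypothesis of two non-separating annuli fails.

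In the remaining cases ($5_2$, $4_1$, and $W^\pm_k$, which are Dehn twists of $5_2$), the annuli are understood through \cite{Wan:24i} and \cite{Wan:24ii}. In the $\tau\rho$ case \ref{itm:taurho_four}, the regular neighborhood of $M_1\cup M_2$ is an $I$-bundle over a once-punctured M\"obius band. Its annuli are:
\begin{itemize}
\item the frontiers of $\rnbhd{M_1}$ and $\rnbhd{M_2}$, which are separating;
\item the vertical annuli over essential arcs and loops, from which we must locate the non-separating ones.
\end{itemize}
For $4_1$, $5_2$ and the $W_k$, I would read off from the cited lemmas which annuli are non-separating and check directly that each has exactly one meridional boundary component. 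Concretely, I would exhibit a boundary component bounding a meridian disk through a vertex of the spine near the special end $c$, for instance the annulus obtained by tubing a type II rectangle around the special end.

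\textbf{Main obstacle.} I expect the hardest step to be this last verification: identifying the non-separating annuli in the infinite families and showing the other boundary component is not meridional. For the non-meridional part I would use irreducibility (Lemma \ref{lm:genus_two_disk}): if both boundary components were meridional, capping off would give an essential sphere or disk. For the meridional part I would use the explicit figures.

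Alternatively, \cite{Wan:24ii} may already show that in the $5_2$ and $4_1$ families there is only one non-separating annulus, in which case these cases fail the hypothesis and no check is needed.
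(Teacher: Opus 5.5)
Your case split along Theorems \ref{teo:tautau_annuli_classification}, \ref{teo:taurho_annuli_classification} and \ref{teo:rhorho_annuli_classification} is the same as the paper's, and the cases you actually carry out are sound. Annuli of type I and II separate $\Compl V$, because the side not containing $P$ is cut off. A Hopf-type annulus is of type $2$. So case \ref{itm:rhorho_two} of Theorem \ref{teo:rhorho_annuli_classification} gives two non-separating annuli only when both $\rho$-tangles have a Hopf $\rho$-summand. In case \ref{itm:taurho:two} of Theorem \ref{teo:taurho_annuli_classification} the second annulus bounds a twisted $I$-bundle and hence separates. One small repair: the non-meridionality of the longitude $\partial D$ of $t_\rho$ should be argued homologically, since it represents the loop of $t_\rho$, a nontrivial class in $H_1(V)$. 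Lemma \ref{lm:genus_two_disk} does not apply, because $D\cap\Compl V$ is an annulus, not a disk.

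The gap is in the four remaining cases: \ref{itm:tautau_infinite} and \ref{itm:tautau_three} of Theorem \ref{teo:tautau_annuli_classification}, and \ref{itm:taurho_infinite} and \ref{itm:taurho_four} of Theorem \ref{teo:taurho_annuli_classification}. That is where the corollary has content, and you leave them as ``read off from the cited lemmas'' with two incompatible guesses about the outcome. The paper's proof needs two specific facts:
\begin{enumerate}
\item in the $5_2$/$W^\pm_k$ cases and the $I$-bundle-over-a-once-punctured-M\"obius-band case there are \emph{not} two non-separating essential annuli;
\item in the $4_1$ case there are two, and both are of type $2$.
\end{enumerate}
Your fallback that $4_1$ might have only one non-separating annulus is wrong. $4_1$ is one of the only two situations where the corollary is not vacuous, so the type $2$ check there cannot be skipped.

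The tool you propose for that check also fails. Irreducibility does not stop both boundary components of an essential annulus from being meridional. Capping such an annulus off gives a $2$-sphere meeting $V$ in two disks, i.e.\ a $2$-decomposing (knotted-handle) sphere, not a reducing disk. Likewise, in case \ref{itm:taurho_four}, listing annuli inside $\rnbhd{M_1\cup M_2}$ does not decide which essential annuli of $\Compl V$ are non-separating; that depends on how the complement is attached. These conclusions come from the known annulus configurations in \cite[Theorem $2.4$]{Wan:24i} and \cite[Lemma $3.20$, Corollary $1.3$]{Wan:24ii}, which the paper's proof invokes. Your proposal would need to actually extract them rather than defer them.
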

\begin{proof}
This happens only in the case of Theorem \ref{teo:tautau_annuli_classification}\ref{itm:tautau_three} or the case of Theorem \ref{teo:rhorho_annuli_classification}\ref{itm:rhorho_two} with both $\rho$-tangles having a Hopf $\rho$-summand. In either case, the two non-separating essential annuli are of type $2$. 
\end{proof}

Recall from \cite[Section $5.4$]{Wan:23} that the atoroidal handlebody-knot in Fig.\ \ref{fig:non_three_decomposable} admits two essential non-separating annuli, up to isotopy; see Figs.\ \ref{fig:annulus_one} and \ref{fig:annulus_two}. None of them is of type $2$, and by Corollary \ref{cor:two_non_sep}, it is not $3$-decomposable. 

\begin{figure}
\begin{subfigure}{.32\linewidth}
\centering
\includegraphics[scale=.11]{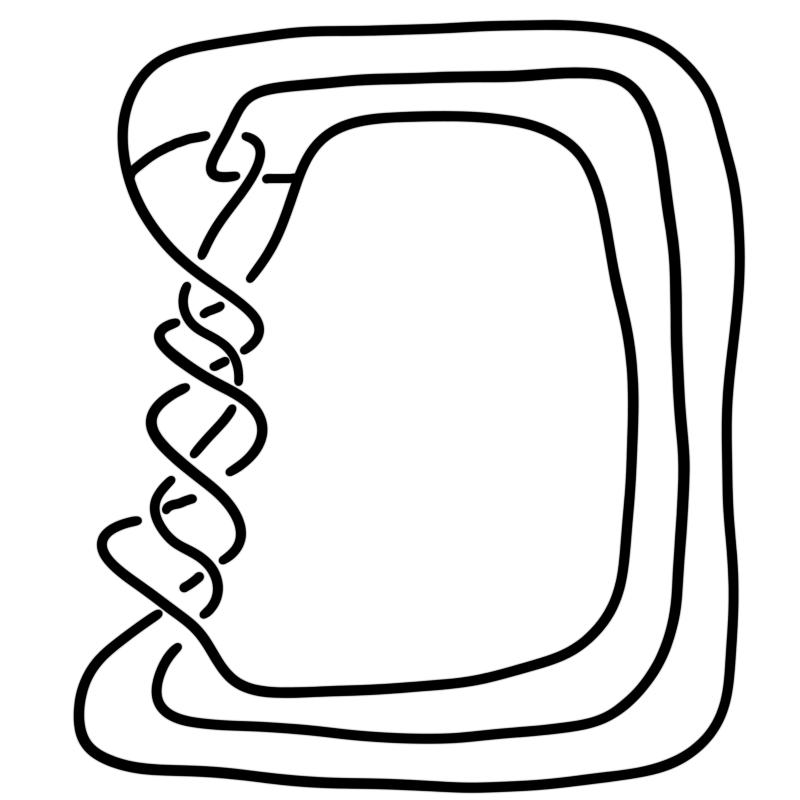}
\caption{}
\label{fig:non_three_decomposable}
\end{subfigure}
\begin{subfigure}{.32\linewidth}
\centering
\includegraphics[scale=.11]{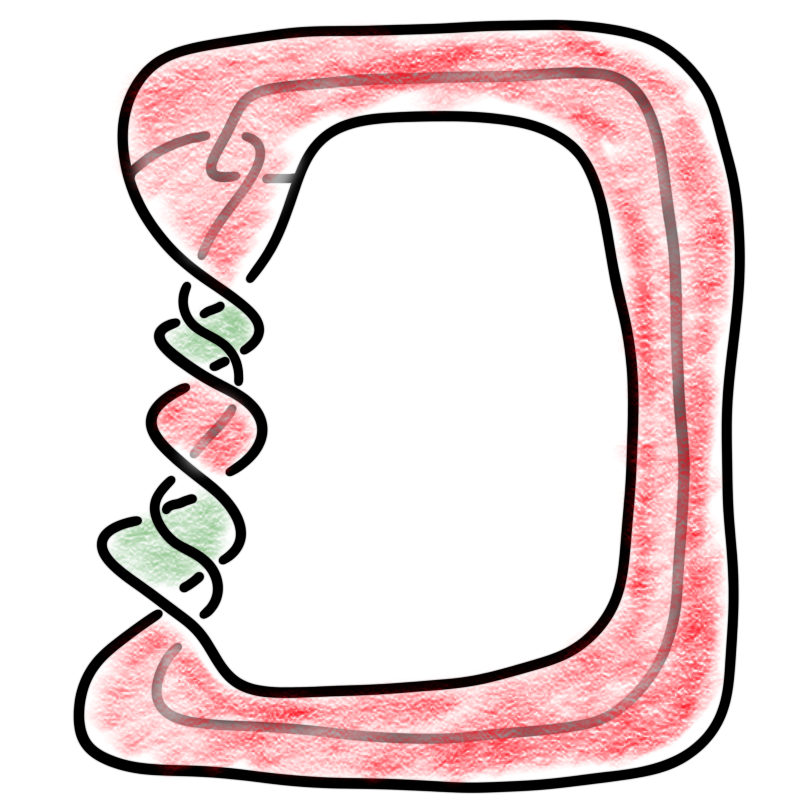}
\caption{}
\label{fig:annulus_one}
\end{subfigure}
\begin{subfigure}{.32\linewidth}
\centering
\includegraphics[scale=.11]{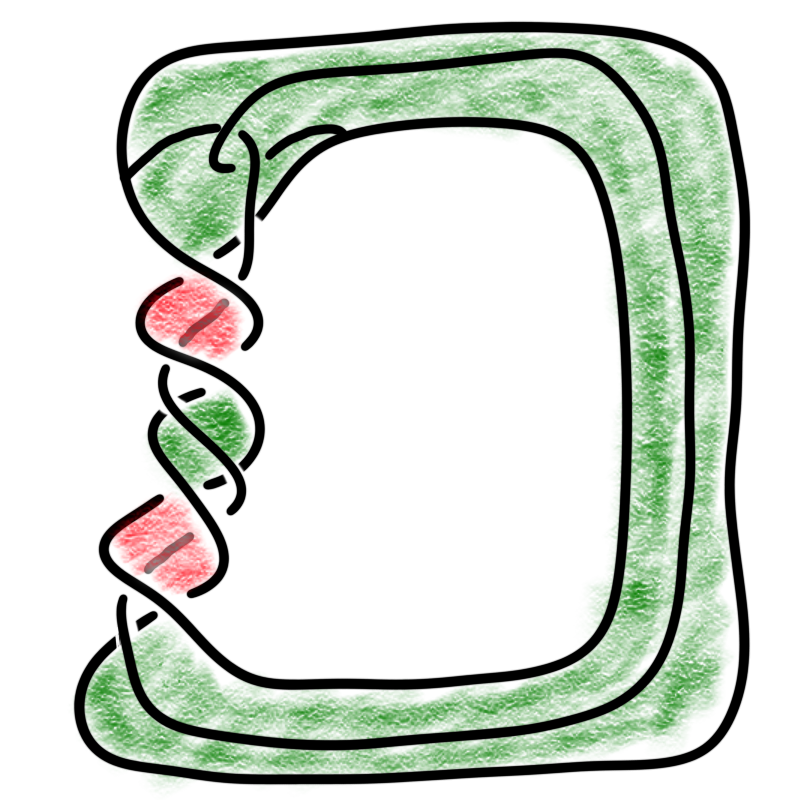}
\caption{}
\label{fig:annulus_two}
\end{subfigure}
\caption{A handlebody-knot that is not $3$-decomposable.}
\label{fig:}
\end{figure}

Recall that the handlebody-knots in the second handlebody-knot family $\mathcal{L}$ in Lee-Lee \cite{LeeLee:12} is obtained by twisting the disk $D$ in Fig.\ \ref{fig:LL_twisting_disk} and applying the mirror image. Deforming the disk $D$ through the isotopy in Fig.\ \ref{fig:tautau_infinite}; see Fig.\ \ref{fig:twisting_disk_deformation}, we see the disk can be identified with the twisting disk in Fig.\ \ref{fig:taurho_infinite}. As a result, we have the following corollary of Theorems \ref{teo:rhorho_annuli_classification}, \ref{teo:taurho_annuli_classification}, and \ref{teo:rhorho_annuli_classification}. 

\begin{corollary}\label{cor:infinite}
If the exterior of an atoroidal $3$-decomposable genus two handlebody-knot admits infinitely many essential annuli, up to isotopy, then the handlebody-knot belongs to $\mathcal{L}$.     
\end{corollary}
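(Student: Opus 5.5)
The plan is to read the corollary off the three classification theorems. Let $V$ be atoroidal, $3$-decomposable of genus two, and suppose $\Compl V$ has infinitely many essential annuli up to isotopy. Fix a $3$-decomposition $(B_1,G_1)\cup_S(B_2,G_2)$. It is of type $\tau\tau$, $\tau\rho$ or $\rho\rho$, and we check each case against Theorems \ref{teo:tautau_annuli_classification}, \ref{teo:taurho_annuli_classification} and \ref{teo:rhorho_annuli_classification}.

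In the $\rho\rho$ case, Theorem \ref{teo:rhorho_annuli_classification} gives at most two essential annuli, so this case cannot occur. In the $\tau\tau$ case, Theorem \ref{teo:tautau_annuli_classification} forces case \ref{itm:tautau_infinite}: the decomposition is special and both slopes equal $\frac13$, or both equal $-\frac13$. Its proof identifies $V$, up to mirror image, with $5_2$ via Fig.\ \ref{fig:tautau_infinite}. In the $\tau\rho$ case, Theorem \ref{teo:taurho_annuli_classification} forces case \ref{itm:taurho_infinite}. There $V=W^\pm_k$, obtained from $W^-_0\cong 5_2$ (up to mirror image) by $k$ Dehn twists along the disk $D$ of Fig.\ \ref{fig:taurho_infinite}, with $W^+_k$ the mirror of $W^-_{-k-1}$.

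It remains to show that every such handlebody-knot lies in $\mathcal L$. Recall that $\mathcal L$ consists of the results of twisting along the disk $D$ of Fig.\ \ref{fig:LL_twisting_disk}, together with their mirror images. The member with zero twists is $5_2$, which lies in $\mathcal L$ (this is the base case of Lee--Lee's family, checked by comparing diagrams). The deformation in Fig.\ \ref{fig:twisting_disk_deformation}, obtained through the isotopy of Fig.\ \ref{fig:tautau_infinite}, carries the Lee--Lee twisting disk to the twisting disk of Fig.\ \ref{fig:taurho_infinite}. Hence $W^-_k$ is the $k$-fold Lee--Lee twist of $5_2$, and so lies in $\mathcal L$. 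Since $\mathcal L$ is closed under mirror image, $W^+_k$ also lies in $\mathcal L$. This settles the $\tau\rho$ case, and the $\tau\tau$ case too, since there $V$ is $5_2$ or its mirror.

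The main obstacle is this identification of twisting disks. One must check that the isotopy between the Lee--Lee diagram and the $\tau\rho$ diagram of $5_2$ carries $D$ to the $\tau\rho$ twisting disk, and not to a disk differing by a framing or orientation. An orientation mismatch would send $k$ to $-k$, which is harmless because $\mathcal L$ includes all twist numbers and mirrors. I would verify this pictorially by tracking $\partial D$ on $\partial V$ through Fig.\ \ref{fig:twisting_disk_deformation}. The remaining steps are direct citations of the classification theorems.
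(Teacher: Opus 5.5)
Your proposal is correct and follows the paper's argument. The classification theorems leave only Theorem \ref{teo:tautau_annuli_classification}\ref{itm:tautau_infinite} and Theorem \ref{teo:taurho_annuli_classification}\ref{itm:taurho_infinite}, where $V$ is $5_2$ (up to mirror image) or some $W^\pm_k$, and these lie in $\mathcal{L}$ because the isotopy of Fig.\ \ref{fig:tautau_infinite} carries the Lee--Lee twisting disk to the twisting disk of Fig.\ \ref{fig:taurho_infinite}, which is exactly the pictorial identification the paper records in Fig.\ \ref{fig:identify_twisting_disks}.
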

\begin{proof}
This happens only in the case of Theorem \ref{teo:tautau_annuli_classification}\ref{itm:tautau_infinite} or Theorem \ref{teo:taurho_annuli_classification}\ref{itm:taurho_infinite}. In either case, the handlebody-knot belongs to $\mathcal{L}$ by the above observation.    
\end{proof}

Corollary \ref{cor:infinite}, together with Koda-Ozawa-Wang \cite[Theorem $4.3$]{KodOzaWan:25i} and \cite[Theorem $5.7$, Corollary $1.11$]{KodOzaWan:25ii}, implies that handlebody-knots obtained from the Eudave-Mu\~noz knots are mostly not $3$-decomposable.

\begin{figure}[h]
\begin{subfigure}{.47\linewidth}
\centering
\begin{overpic}[scale=.2, percent]{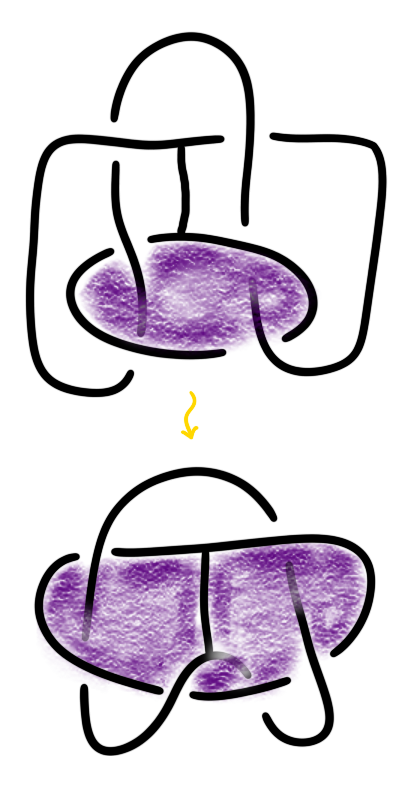}
\put(23,62){$D$}
\put(30,22){$D$}
\end{overpic} 
\caption{}
\label{fig:LL_twisting_disk}
\end{subfigure}
\begin{subfigure}{.47\linewidth}
\centering
\begin{overpic}[scale=.2, percent]{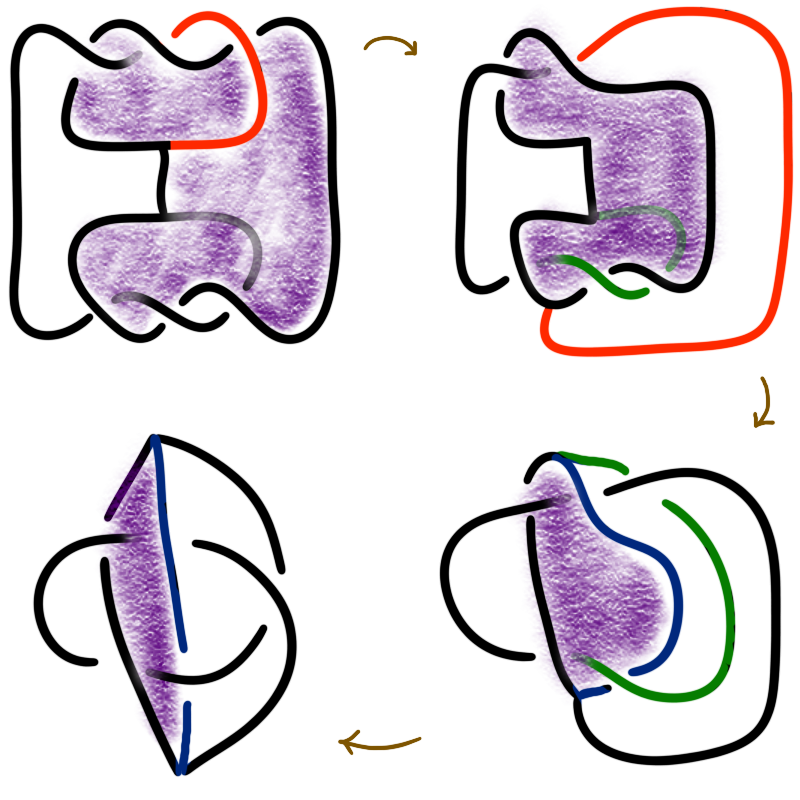}
\put(31,75){$D$}
\end{overpic} 
\caption{}
\label{fig:twisting_disk_deformation}
\end{subfigure}
\caption{Deformation of the twisting disk.}
\label{fig:identify_twisting_disks}
\end{figure}

%% file: sixeight.tex
\begin{figure}[b]
\begin{subfigure}{.32\linewidth}
\centering
\begin{overpic}[scale=.15,percent]
{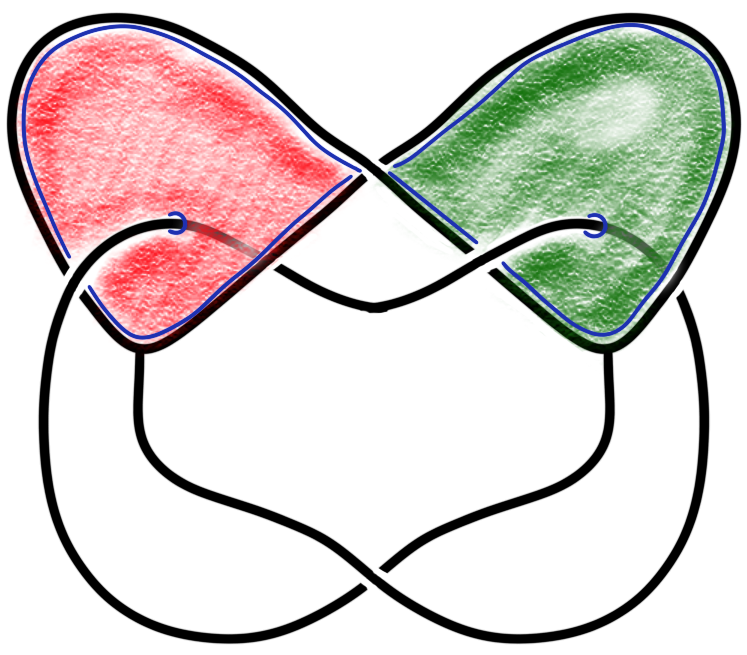}
\put(22,60){\footnotesize $C_1$}
\put(80,60){\footnotesize $C_2$}
\put(20,77){\footnotesize $C_3$}
\put(80,70){$P$}
\end{overpic}
\caption{Twice-punctured disk $P$.}
\label{fig:twice_punctured}
\end{subfigure}
\begin{subfigure}{.3\linewidth}
\centering
\begin{overpic}[scale=.13,percent]
{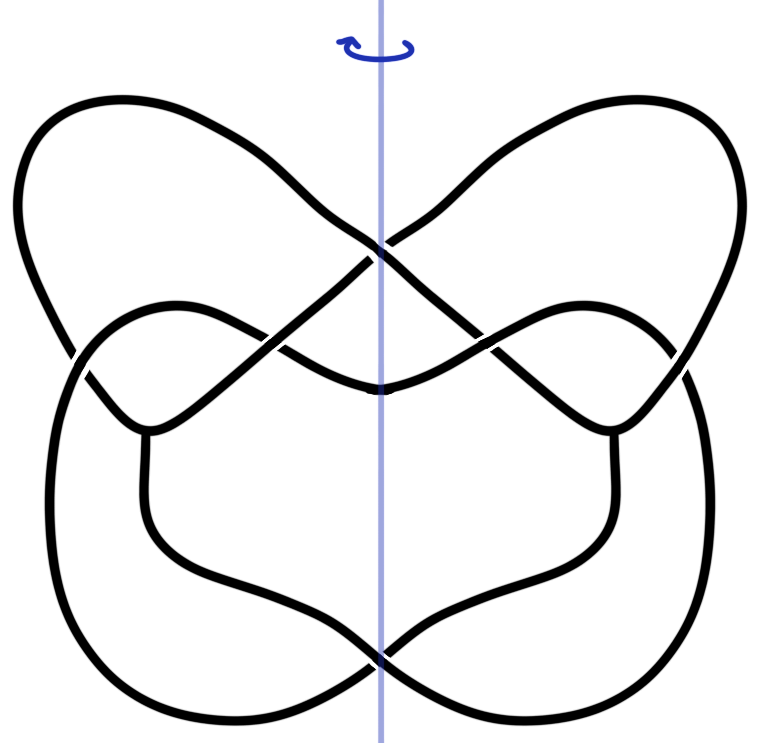} 
\end{overpic}
\caption{Involution $\phi$.}
\label{fig:involution}
\end{subfigure}
\begin{subfigure}{.35\linewidth}
\centering
\begin{overpic}[scale=.17,percent]
{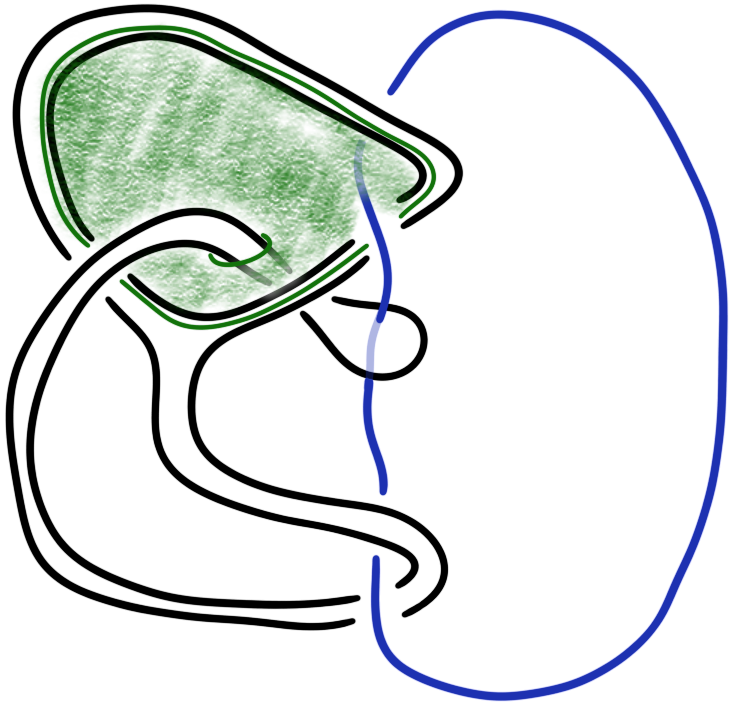} 
\put(80,8){$k$}
\put(30,75){$Q$}
\put(21.8,42){\tiny $U$}
\put(22,49){\tiny $l_1$}
\put(30.8,62.2){\tiny $l_2$}
\end{overpic}
\caption{$W:=U\cup\rnbhd{k}\subset S^3$.}
\label{fig:quotient}
\end{subfigure}
\begin{subfigure}{.45\linewidth}
\centering
\begin{overpic}[scale=.16,percent]
{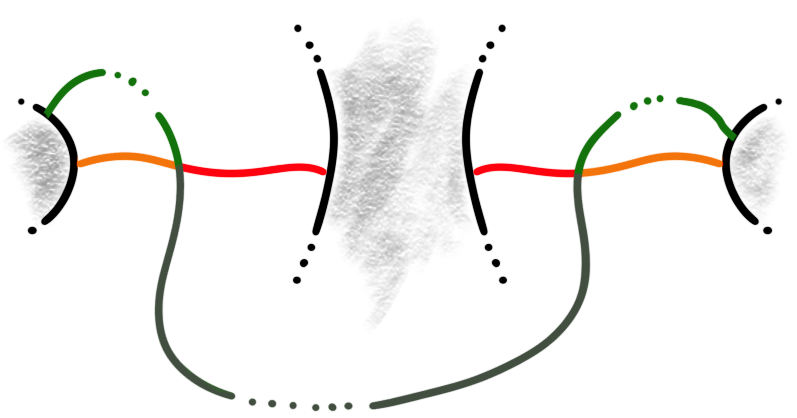}
\put(11,26){\footnotesize $Q'$}
\put(80,26){\footnotesize $Q'$}
\put(10,44){\footnotesize $F'$}
\put(82,41){\footnotesize $F'$}
\put(82,41){\footnotesize $F'$}
\put(39,4){\footnotesize $F-F'$}
\put(50,24){\footnotesize $U$}
\put(0,28){\footnotesize $U$}
\put(94,28){\footnotesize $U$}
\end{overpic}
\caption{$F\cap Q\subset Q$ consists of essential loops.}
\label{fig:essential_loop}
\end{subfigure}
\begin{subfigure}{.45\linewidth}
\centering
\begin{overpic}[scale=.16,percent]
{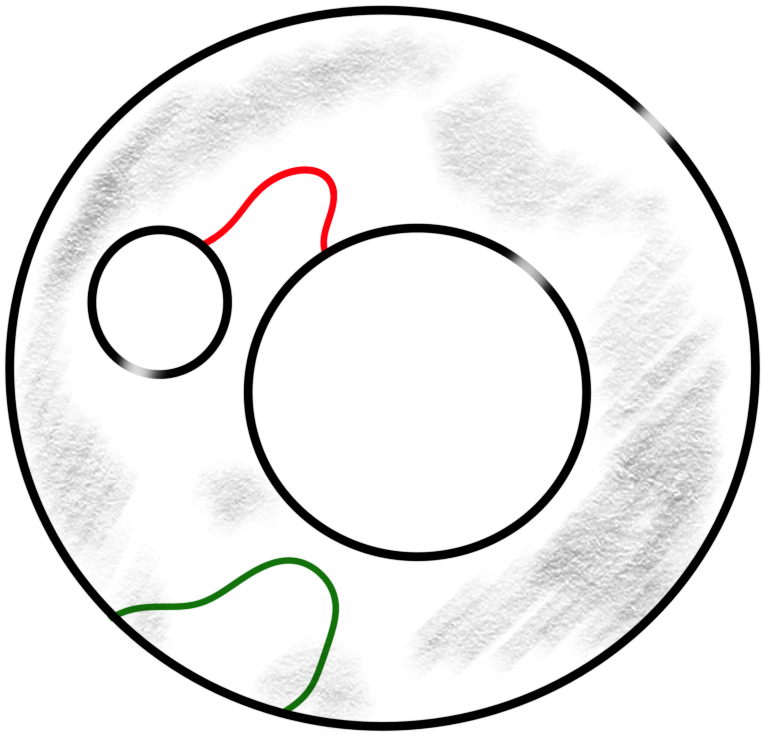} 
\put(16,43){\footnotesize $l_2$}
\put(68,61){\footnotesize $l_1$}
\put(84,80){\footnotesize $l_1$}
\put(33,75){\footnotesize $S\cap F_1$}
\put(44,10){\footnotesize $S\cap F'$}
\put(75,70){\footnotesize $S$}
\end{overpic}
\caption{$F\cap Q\subset Q$ consists of essential arcs.}
\label{fig:essential_arc}
\end{subfigure}
\caption{}
\end{figure}

Denote by $V$ the handlebody-knot in the handlebody-knot table \cite{IshKisMorSuz:12}, and let $P$ be the twice punctured disk in $\Compl V$ in Fig.\ \ref{fig:twice_punctured}.
Denote by $C_1,C_2$ the components of $\partial P$ 
that bound a disk in $V$ and $C_3$ the other component.

\begin{lemma}\label{lm:twice_punctured_disk_P}
$P$ is incompressible in $\Compl V$. 
\end{lemma}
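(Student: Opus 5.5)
Since $P$ is planar and $\Compl V$ is irreducible, $P$ is incompressible if and only if none of its boundary curves, viewed in $P$, can be made to bound a disk of $\Compl V$ meeting $P$ only in its boundary. So I will suppose $E\subset\Compl V$ is a compressing disk of $P$ with $E\cap P=\partial E$ and derive a contradiction. The essential simple closed curves in a pair of pants are exactly those parallel to $C_1$, $C_2$ or $C_3$, so $\partial E$ is parallel in $P$ to one of them.

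\emph{Curves parallel to $C_1$ or $C_2$.} Each $C_i$, $i=1,2$, bounds an essential disk $D_i$ of $V$. If $C_i$ also bounded a disk $E$ in $\Compl V$, then $E\cup D_i$ would be a $2$-sphere meeting $\partial V$ in the single curve $C_i$, which is essential in $\partial V$. Hence $V$ would be reducible (or $1$-decomposable), so the sphere would separate $V$ or cut off a nontrivial piece.

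This contradicts the irreducibility of $6_8$, which I will take from the table \cite{IshKisMorSuz:12}; it is among the irreducible genus two handlebody-knots enumerated there. I expect $6_8$ does not appear in the list after Lemma~\ref{lm:atoroidal_essentiality}, so irreducibility cannot be quoted from that lemma and must come from the table.

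\emph{Curve parallel to $C_3$.} This is the main case and the main obstacle. Since $C_3$ does not bound a disk in $V$, it lies essentially on $\partial V$. A compressing disk $E$ with $\partial E=C_3$ would exhibit $C_3$ as a reducing curve, that is, $V$ would be reducible. To rule this out I will compute $\pi_1(\Compl V)$ from the diagram in Fig.~\ref{fig:twice_punctured} and show that the element represented by $C_3$ is nontrivial. A disk is only needed to bound $C_3$ itself, so nontriviality of the class in $\pi_1$ suffices.

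Concretely, I would take the Wirtinger presentation of $\pi_1(\Compl V)$, which is free of rank two up to relations, and write down the word for $C_3$. I would then exhibit a representation to a finite group, e.g.\ $S_3$ or $S_4$, in which this word maps nontrivially. An alternative is to compute its image under an Alexander-type abelianisation of a suitable cover. The case $C_1$/$C_2$ could also be settled by this same $\pi_1$ check, giving a uniform argument.

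Finally, I would remark that the statement must be read up to parallelism: $\partial E$ need not equal $C_i$, but is isotopic in $P$ to one of them, and such an isotopy carries over to a disk with boundary exactly $C_i$. The representation computation is the step I expect to be delicate, since it depends on reading the correct word off the figure.
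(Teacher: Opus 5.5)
Your argument is correct and is essentially the paper's: a compressing disk of $P$ has boundary parallel to some $C_i$, each $C_i$ is essential in $\partial V$, so the disk is an essential disk in $\Compl V$, contradicting the irreducibility of $6_8$ taken from the table. The $C_3$ case is not an obstacle: once you note that such a disk makes $V$ reducible, the irreducibility you already cited finishes it, so the $\pi_1$/representation computation is unnecessary, and so is the sphere $E\cup D_i$ detour in the $C_1,C_2$ case, since $E$ itself is the essential disk.
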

\begin{proof}
 Since $C_1,C_2,C_3\subset \partial V$ are essential, 
 every compressing disk of $P$ induces a compressing disk of $\partial V\subset \Compl V$, contradicting $6_8$ is irreducible. 
\end{proof}

\begin{lemma}\label{lm:sixeight_atoroidality}
$(S^3,V)$ is atoroidal.    
\end{lemma}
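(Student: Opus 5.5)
Let $T\subset\Compl V$ be an essential torus. We isotope $T$ so that $\vert T\cap P\vert$ is minimal among all essential tori in $\Compl V$, and then show that $T$ can be pushed off $P$, reaching a contradiction on each side.

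\emph{Step 1: removing inessential curves.} Since $P$ is incompressible by Lemma \ref{lm:twice_punctured_disk_P} and $\Compl V$ is irreducible (as $6_8$ is), an innermost-disk argument removes every component of $T\cap P$ that is inessential in $P$. We also use that $T$ is incompressible. Hence every component of $T\cap P$ is essential in both $T$ and $P$. So each such curve is parallel in $P$ to one of $C_1,C_2,C_3$.

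\emph{Step 2: annular pieces.} The curves of $T\cap P$ cut $T$ into annuli. Let $A$ be one of these annuli, and suppose both components of $\partial A$ are parallel in $P$ to the same $C_i$. Then $\partial A$ cobounds an annulus $A'\subset P$, and $A\cup A'$ is a torus. We argue as in Case $1$ of Lemma \ref{lm:torus}. By minimality and atoroidality considerations, $A\cup A'$ bounds a solid torus in which $A'$ is longitudinal; here we use that $C_1,C_2$ bound meridian disks of $V$, and that $C_3$ is essential in $\partial V$. Pushing $A$ across this solid torus reduces $\vert T\cap P\vert$, contradicting minimality. Therefore adjacent curves of $T\cap P$ must lie in different parallelism classes. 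A parity/counting argument on the cyclic sequence of curves on $T$ then leaves only two possibilities: $T\cap P=\emptyset$, or every annulus of $T$ cut by $P$ runs between $C_i$-parallel and $C_j$-parallel curves with $i\neq j$.

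\emph{Step 3: using the involution $\phi$ and the quotient $W$.} The second possibility is where I expect the main difficulty. Here I would use the involution $\phi$ of Fig.\ \ref{fig:involution}, which preserves $V$ and $P$. By the equivariant torus theorem (Meeks--Scott / Holzmann), if $\Compl V$ is toroidal then there is an essential torus that is either $\phi$-invariant or disjoint from its image. Such a torus projects to an incompressible torus, or to a Klein bottle or orbifold torus, in the quotient. The quotient is described by $W=U\cup\rnbhd k\subset S^3$ of Fig.\ \ref{fig:quotient}, together with the surface $Q$.

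In the quotient I would repeat the analysis of Steps $1$--$2$ for the intersection $F\cap Q$. This splits into two cases. In the first case, $F\cap Q\subset Q$ consists of essential loops, as in Fig.\ \ref{fig:essential_loop}. Then $F$ is cut into annuli lying in $U$ or in the complement, and these annuli are ruled out because $U$ is a simple piece: the loops $l_1,l_2$ would be forced to be cabled or parallel, which they are not. In the second case, $F\cap Q\subset Q$ consists of essential arcs, as in Fig.\ \ref{fig:essential_arc}. Then $F$ is cut into disks meeting $S\cap F_1$ and $S\cap F'$. An outermost-disk argument produces a $\partial$-compression that contradicts the incompressibility of $F$ or the minimality.

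\emph{Step 4: the case $T\cap P=\emptyset$.} Once all intersections are removed, $T$ lies in the complement of $P$, which is a simpler manifold built from the exteriors of the pieces cut by $P$. These pieces are rational tangles or trivial knot exteriors and so are atoroidal. Hence $T$ is compressible or boundary-parallel there, and this extends to $\Compl V$, a contradiction. I expect the delicate point of the whole argument to be controlling $C_3$-parallel curves in Step $2$ and checking the quotient pieces in Step $3$.
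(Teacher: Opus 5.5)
\textbf{Verdict: your proposal has a genuine gap, at Step~3.} Steps 1, 2 and 4 broadly match the paper. The paper also minimizes $\vert T\cap P\vert$ and rules out annuli of $T$ cut by $P$ whose two boundary curves are parallel to the same $C_i$ by minimality. It also handles $T\cap P=\emptyset$ by noting that $V':=V\cup\rnbhd P$ is a trivial genus three handlebody-knot.

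One correction to Step~4: $P$ is non-separating. So $\Compl V$ cut along $P$ is the single handlebody $\Compl{V'}$, not a union of ``rational tangles or trivial knot exteriors.'' Every torus in a handlebody compresses, and that compression lies in $\Compl V$. The case $\vert T\cap P\vert=1$, which the paper treats separately using connectedness of $\partial V$, should also be stated explicitly rather than left to an unspecified parity argument.

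Step~3 is where the content of the lemma lies, and what you write there is not an argument.
\begin{itemize}
\item For a closed surface $F=p(T)$ in the interior of $\Compl U$, $F\cap Q$ consists only of closed curves. Your ``essential arcs'' case, borrowed from the annulus analysis in Theorem~\ref{6_8}, therefore cannot occur.
\item ``Annuli lying in $U$'' make no sense, since $F\subset\Compl U$. The sentence about $l_1,l_2$ being ``cabled or parallel'' proves nothing.
\item You never treat the case where $\phi\vert_T$ has four fixed points. Then $F$ is a $2$-sphere meeting $k$ in four points, bounding a $2$-string tangle on the arc $k\cap\Compl U$. When that tangle is non-rational, its double branched cover is exactly an incompressible-torus situation.
\item Hyperbolicity of $6_3$ gives no control over essential four-punctured spheres in $\Compl W$. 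Indeed, in Case (ii) of Theorem~\ref{6_8} the paper meets precisely this situation and disposes of it by citing Lemma~\ref{lm:sixeight_atoroidality}. The quotient route therefore needs, at this point, the very statement you are trying to prove.
\end{itemize}

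The missing idea is homological, and it stays in $\Compl{V'}$ with no involution needed. Cutting $T$ along $P$ gives annuli in $\Compl{V'}$ with boundary on the two copies $P_\pm$ of $P$, and the two boundary circles of such an annulus are homologous, up to sign, in $H_1(\Compl{V'})$.
\begin{itemize}
\item Curves parallel to $C_1$ or $C_2$ are non-trivial in $H_1(\Compl{V'})$, while curves parallel to $C_3$ are trivial. This excludes annuli joining a $C_3$-parallel curve to a $C_1$- or $C_2$-parallel one.
\item What remains is annuli joining $C_1$- and $C_2$-parallel curves. The paper uses that $\rnbhd P$ cuts $\partial V$ into an annulus and a four-punctured sphere. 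On the copy $P_-$ not meeting that annulus, the curves parallel to $C_1$ and $C_2$ represent different classes in $H_1(\Compl{V'})$. After possibly replacing $A$ by an adjacent annulus so that it meets $P_-$, this gives the contradiction.
\end{itemize}
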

\begin{proof}
Suppose otherwise and $T$ is an incompressible torus in $\Compl V$. Isotope $T$ so $\vert T\cap P\vert$ is minimized. 

Suppose $\vert T\cap P\vert=0$. Then, since $V':=V\cup \rnbhd P\subset S^3$ is a trivial genus three handlebody-knot, $T$ is compressible in $\Compl {V'}$, so $T$ is compressible in $\Compl V$, a contradiction.

Suppose $\vert T\cap P\vert =1$, and $T\cap P$ is parallel to $C_i$ in $P$. Then $T$ separates $C_i$ from the other components of $\partial P$ in $\Compl V$, contradicting that $\partial V$ is connected.

Suppose $\vert T\cap P\vert \geq 2$. Then $P$ cuts off an annulus $A\subset T$. Observe that components of $\partial A$ are not parallel to the same component of $\partial P$ by the minimality of $\vert T\cap P\vert$. Also, components of $\partial A$ are not parallel to $C_1, C_3$ or $C_2, C_3$, since, in $H_1(\Compl{V'})$, the homology classes induced by $C_1,C_2$ are non-trivial, whereas the homology class of $C_3$ is trivial. Consequently, components of $\partial A$ are parallel to $C_1, C_2$. 

Now, observe that a regular neighborhood $\rnbhd{P}$ of $P$ cuts $\partial V$ into an annulus and a four-punctured sphere. Denote by $P_+$ the component in the frontier of $\rnbhd{P}$ that meets the annulus and by $P_-$ the other component in the frontier. 
If necessarily, by replacing $A$ with the adjacent annulus cut off from $T$ by $P$, it may be assumed that $\overline{A-\rnbhd{P}}$ is an annulus in $\Compl{V'}$ that meets $P_-$, yet this contradicts that the components of $\partial P_-$ that are parallel in $\partial V$ to $C_1,C_2$ represent different homology classes in $H_1(\Compl {V'})$. 

In all cases, we see a contradiction, so $6_{8}$ is atoroidal.
\end{proof}

\begin{theorem}\label{6_8}
$6_8$ is hyperbolic.
\end{theorem}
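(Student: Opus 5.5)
By Thurston's hyperbolization theorem, it suffices to show that $\Compl V$ contains no essential disk, torus or annulus. Irreducibility of $6_8$ is known from the table \cite{IshKisMorSuz:12}; it is also used in Lemma \ref{lm:twice_punctured_disk_P}. Atoroidality is Lemma \ref{lm:sixeight_atoroidality}. So the whole task is to rule out an essential annulus $A\subset\Compl V$. The figure captions (involution $\phi$, quotient $W:=U\cup\rnbhd{k}$, surface $Q$, and the two cases "$F\cap Q$ consists of essential loops / essential arcs") indicate the strategy: pass to the quotient of $\Compl V$ under the involution $\phi$ and analyse there.

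\textbf{Reduction to a $\phi$-invariant annulus.} First I would use the characteristic (JSJ) submanifold theory for the atoroidal, irreducible, $\partial$-irreducible manifold $\Compl V$. Its essential annuli lie, up to isotopy, in the characteristic submanifold, which is unique up to isotopy. It can therefore be taken invariant under the involution $\phi$ of Fig.\ \ref{fig:involution}, which preserves $V$. If $\Compl V$ is cylindrical, this gives a $\phi$-invariant essential annulus $A$, or one with $\phi(A)\cap A=\emptyset$. Alternatively, an equivariant-surface argument in the style of Meeks--Scott would produce a $\phi$-equivariant essential annulus or Möbius band.

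\textbf{Analysis in the quotient.} The quotient of $(S^3,V)$ by $\phi$ is $S^3$ with branch locus $k$ and the image $U$ of $V$; set $W:=U\cup\rnbhd{k}$ as in Fig.\ \ref{fig:quotient}. The image of $A$ is then an essential annulus or Möbius band $F$ in $\Compl W$, possibly meeting the branch locus. Next I would intersect $F$ with the surface $Q$ of Fig.\ \ref{fig:quotient}, which is chosen so that $\Compl{W\cup Q}$ is simple (a handlebody). I would minimize $\vert F\cap Q\vert$; incompressibility of $Q$ (analogous to Lemma \ref{lm:twice_punctured_disk_P}) and of $F$ then leave two cases.

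\emph{Case 1: $F\cap Q$ consists of essential loops} (Fig.\ \ref{fig:essential_loop}). The pieces $F'$ of $F$ are annuli in the cut-open manifold. I would use the homology classes of the curves $l_1,l_2$ in $H_1$ of the complement to show that the boundary slopes of $F'$ are incompatible, arguing as in the homological step of Lemma \ref{lm:sixeight_atoroidality}.

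\emph{Case 2: $F\cap Q$ consists of essential arcs} (Fig.\ \ref{fig:essential_arc}). The pieces are rectangles. I would show that an outermost rectangle produces either a $\partial$-compressing disk of $Q$ or a disk meeting $l_1$ or $l_2$ in a way forbidden by their linking with $k$, which contradicts minimality.

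\textbf{Expected obstacle.} The main difficulty is the equivariance step: guaranteeing that a $\phi$-invariant essential annulus exists at all, and then controlling how its image interacts with the branch locus $k$ (fixed arcs versus a free action). Handling the Möbius-band quotients and annuli crossing $k$ in the case analysis will need the most care. If the equivariant reduction proves awkward, a fallback is to cut $\Compl V$ directly along $P$, which makes the complement of $V'=V\cup\rnbhd P$ a genus three handlebody exterior, and to run the same loop/arc analysis on $A\cap P$.
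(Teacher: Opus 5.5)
\textbf{Verdict: genuine gap.} Your top-level route is the paper's: an equivariant JSJ argument gives an essential annulus $A$ with $\phi(A)=A$ or $\phi(A)\cap A=\emptyset$, and you then analyse $F:=p(A)$ in the quotient. The gap is in that quotient analysis.

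\textbf{Missing input: $W\cong 6_3$ is hyperbolic.} You never use that $W=U\cup\rnbhd{k}$ is equivalent to $6_3$, which is hyperbolic by Corollary \ref{cor:hyperbolicity}. This is the decisive fact: every case of the paper's argument ends in a contradiction with the irreducibility or acylindricity of $\Compl W$, or with Lemma \ref{lm:sixeight_atoroidality}. Your substitutes, namely ``incompatible boundary slopes'' read off from $H_1$-classes of $l_1,l_2$ and ``linking with $k$'', are not actual arguments. For example, nothing in your plan excludes an annulus $F$ disjoint from $k$ with these properties:
\begin{itemize}
\item one boundary component bounds a disk in $\partial U$ containing both points of $k\cap\partial U$;
\item the other boundary component is essential in $\partial U$.
\end{itemize}
Such an $F$ is an essential annulus in $\Compl W$, and only the hyperbolicity of $6_3$ rules it out. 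Note also that $Q$ meets $k$, so it is not a surface in $\Compl W$. What the paper actually uses is that $\Compl{U\cup Q}$ has a unique essential disk, and that disk meets $k$ once.

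\textbf{Wrong description of $F$.} You describe $F$ as ``an essential annulus or M\"obius band in $\Compl W$, possibly meeting the branch locus''. This is self-contradictory, since surfaces in $\Compl W$ miss $k$. It also misses the main case. If $\phi\vert_A$ has two isolated fixed points, $F$ is a disk in $\Compl U$ meeting $k$ in two interior points, and your loop/arc analysis for annuli and rectangles does not address it. (Fixed arcs or a fixed circle are impossible, since $k$ meets $\Compl U$ in a single arc.)

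\textbf{Essentiality does not descend.} Each way in which $F$ can fail to be essential in the quotient has to be lifted back to $A$. For the two-point disk this needs a separate idea.
\begin{itemize}
\item If $F$ is inessential in $\Compl U$, it cuts off a disk $D\subset\partial U$. If $D\cap k=\emptyset$, then $A$ is compressible. Otherwise $F\cup D$ splits $(S^3,k)$ into two $2$-string tangles, one of which is rational. A rational side in $\Compl U$ gives a $\partial$-compressing disk of $A$. A non-rational side lifts to an incompressible torus in $\Compl V$, contradicting Lemma \ref{lm:sixeight_atoroidality}.
\item If $F$ is essential in $\Compl U$, the paper minimizes $\vert F\cap Q\vert$. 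It removes inessential arcs and loops, then essential loops, then essential arcs, using three tools: parity of branch points on spheres, minimality, and the irreducibility of $6_3$. The last applies because a disk in $\Compl U$ missing $k$ would be an essential disk in $\Compl W$.
\end{itemize}

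\textbf{Fallback.} Cutting along $P$ is not developed enough to count as an argument. The complement $\Compl{V\cup\rnbhd{P}}$ is a genus three handlebody, which by itself places no restriction on the pieces of $A$.
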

\begin{proof}
Consider the involution $\phi$ of $V$ given by the rotation in Fig.\ \ref{fig:involution}, and denote by $p$ the projection from $(S^3,V)$ to the quotient $(S^3/\phi,V/\phi)$. Note that $p(S^3)=S^3/\phi$ can be identified with the $3$-sphere $S^3$, while $U:=p(V)=V/\phi$ is a solid torus. Note that the branched circle $k\subset p(S^3)$ is an unknot, and observe that the union $W$ of $U$ and a regular neighborhood of $k$ (see Fig.\ \ref{fig:quotient}) is equivalent to the handlebody-knot $6_3$ in the handlebody-knot table, which is hyperbolic by Corollary \ref{cor:hyperbolicity}. 
Let $Q$ be the annulus $p(P)$ intersecting $k$ at one point; see Fig.\ \ref{fig:quotient}.

Suppose $\Compl V$ admits an essential annulus. Then by the equivariant JSJ-decomposition \cite[Theorem $5.4$]{JacRub:89}, there exists an annulus $A\subset \Compl V$ such that either $\phi(A),A$ are disjoint or $\phi$ restricts to an involution on $A$. In the latter, there are four possible situations: $\phi\vert_A$ is fixed-point free, or the fixed-point set of $\phi\vert_A$ consists of two points, two arcs, or a circle.
Either it be the former or the latter, the image $F:=p(A)$ can be classified into the following four cases:  
\begin{enumerate}[label=\textrm{(\roman*)}]
    \item\label{itm:disjoint_free} $F$ is an annulus in $\Compl U$ with $F\cap k=\emptyset$.
    \item\label{itm:two_points} $F$ is a disk in $\Compl U$ with $F\cap k=\mathring{F}\cap k$ two points.
    \item\label{itm:two_arcs} $F$ is a disk in $\Compl U$ with 
    $F\cap k=\partial F\cap k$ two arcs.
    \item\label{itm:one_circle} $F$ is an annulus in $\Compl U$ with 
    $F\cap k=\partial F\cap k$ a circle.
\end{enumerate}

Case \ref{itm:disjoint_free}: Suppose one component $l\subset \partial F$ is inessential in $\partial U$, and $D$ is the disk $l$ cuts off from $\partial U$. Then $D$ must meet $k$ since $A$ is essential. 

If $\vert D\cap k\vert\leq 1$, then $p^{-1}(D)$ is a compressing disk of $A$, a contradiction. 
Therefore, $\vert D\cap k \vert=2$ since $\vert\partial U\cap k\vert=2$. Now, let $l'$ be the other component of $\partial F$. If $l'\subset \partial U$ is also inessential, then $l'$ is parallel to $l$ in $\partial U$, and $l\cup l'$ cuts off from $\partial U$ an annulus $F'$. By the hyperbolicity of $6_3$, $F\cup F'$ bounds a solid torus disjoint from $k$. Since the core of $F'$ bounds an essential disk in $W$, it is a longitude of the solid torus. This implies $F$ is $\partial$-parallel through a solid torus disjoint from $k$ to $F'$, so $A$ is $\partial$-parallel, a contradiction. 
If $l'\subset \partial U$ is essential, then $F$ is an essential annulus in the exterior $\Compl W$ of $6_3$, contradicting the hyperbolicity of $6_3$. 

As a result, components of $\partial F$ are essential, and hence parallel, components in $\partial U$. In particular, $F$ is separating in $\Compl W$, and hence components in $\partial F$ is parallel also in $\partial W$.
By the hyperbolicity of $6_3$,  $F$ is inessential in $\Compl W$. Suppose $F$ is compressible in $\Compl W$.
Then any compressing disk induces a compressing disk of $A$ in $\Compl V$, a contradiction. 
If $F$ is incompressible but $\partial$-compressible in $\Compl W$, then, by the irreducibility of $6_3$, $F$ is $\partial$-parallel. This implies $A$ is $\partial$-parallel, a contradiction.

Case \ref{itm:two_points}: Suppose $F$ is an essential disk in $\Compl U$. Then it may be assumed that $F$ is so chosen that $\vert F\cap Q\vert$ is minimized. Denote by $l_1$ (resp.\ $l_2$) the boundary component of $Q$ that is essential (resp.\ inessential) in $U$; see Fig.\ \ref{fig:quotient}. Since $\partial F$ is parallel to $l_1$, if $F\cap Q=\emptyset$, then $F\cup Q$ disconnects $\Compl U$, contradicting that $\vert (F\cup Q)\cap k\vert=3$. 

Suppose $F\cap Q$ contains an inessential arc $c$ in $Q$. It may be assumed that $c$ is outermost and cuts off a disk $Q'\subset Q$. Then $c$ cuts $F$ into two disks, one of which, say $F'$, has $\partial (F'\cup Q')$ inessential in $\partial U$. In particular, 
$\vert (Q'\cup F')\cap k\vert$ is even. 
If $\vert Q'\cap k\vert=i=\vert F'\cap k\vert$, $i=0,1$, then $Q'\cup (F-F')$ induces a disk with two branched points with less intersection with $Q$, contradicting the minimality.
If $\vert Q'\cap k\vert=0$, $\vert F'\cap k\vert =2$, 
then $Q'\cup (F-F')$ induces an essential disk disjoint from $k$, contradicting the irreducibility of $6_3$. 

Similarly, if $c\subset F\cap Q$ is an innermost, inessential loop in $Q$. Then $c$ cuts off an innermost disk $Q'\subset Q$ and a disk $F'\subset F$. 
Since $F'\cup Q'$ is a $2$-sphere, $\vert (F'\cup Q')\cap k\vert$ is even.
In the case $\vert Q'\cap k\vert=\vert F'\cap k\vert=i$, $i=0,1$, the union $Q'\cup (F-F')$ induces a disk with two branched points which has less intersections with $Q$ than $F$ does, contradicting the minimality. 
In the case $\vert Q'\cap k\vert=0,\vert F'\cap k\vert=2$, the essential disk  
$Q'\cup (F-F')\subset\Compl U$ is disjoint from $k$, contradicting the irreducibility of $6_3$. 

Consequently, $F\cap Q$ contains either loops or arcs essential in $Q$.
Suppose the former. Then there exists a loop $c\subset F\cap Q$ outermost in $Q$ such that $l_1$ is a boundary component of the outermost annulus $Q'\subset Q$ cut off by $c$; see Fig.\ \ref{fig:essential_loop}. Denote by $F'\subset F$ the annulus cut off by $c$. Then, since $\partial F$ is parallel to $l_1$, $F'\cup Q'\subset \Compl U$ is separating, so $\vert (F'\cup Q')\cap k\vert=0,2$. 
If $\vert F'\cap k\vert=\vert Q'\cap k\vert=i$, $i=0,1$, then $(F-F')\cup Q'$ induces a disk with two branched points that has less intersections with $Q$ than $F$ does, a contradiction. 
If $\vert F'\cap k\vert=2,\vert Q'\cap k\vert=0$, then 
$(F-F')\cup Q$ gives an essential disk in the exterior of $6_3$, contradicting its irreducibility. As a result, $F\cap Q$ only contains arcs essential in $Q$. 

Observe now that $l_1\cup l_2$ cuts $\partial U$ into a disk $R$ and a three-punctured sphere $S$. If $F_1\subset F$ is an outermost disk cut off by 
$F\cap Q$. Then $F_1$ is on the side of $Q$ opposite to $R$, and $F_1\cap \partial F$ is an essential arc in $S$; see Fig.\ \ref{fig:essential_arc}. In particular, $F_1$ is the unique essential disk in $\Compl{U\cup Q}$, and hence meets $k$ once. This implies that there are exactly two outermost components $F_1,F_2$ in $F$ cut off by $F\cap Q$. Since $F_1,F_2$ are on the same side of $Q$, there exists a disk $F'\subset F$ cut off by $F\cap Q$ adjacent to $F_1$ with $F'\neq F_2$. Since the arc $F'\cap S$ is disjoint from $F_1\cap S$ and meets only one component of $\partial S$. $F'\cap S\subset S$ is inessential and cuts off a disk disjoint from $k$. Isotoping $F$ through the disk induces an essential disk with two branched points having less intersections with $Q$, contradicting the minimality.

Consequently, $F$ is inessential in $\Compl U$, and cuts off a disk $D$ from $\partial U$. Since $\vert F\cap k\vert=2$, $D$ either is disjoint from $k$ or meets $k$ at the two points. In the former, $D$ induces a compressing disk of $A$, a contradiction. In the latter, the $2$-sphere $F\cup D$ decomposes $S^3$ into two $3$-balls $B_1,B_2$ with both $(B_1,k\cap B_1)$ and $(B_2,k\cap B_2)$ being $2$-string tangles. It may be assumed that $B_1\subset \Compl{U}$. Since $p^{-1}(S^3)$ is a $3$-sphere, one of $p^{-1}(B_i)$, $i=1,2$, is a solid torus, or equivalently, one of $(B_i,k\cap B_i)$, $i=1,2$, is rational. If $(B_1,k\cap B_1)$ is rational, then the disk in $B_1$ separating the two strings of $k\cap B_1$ induces a $\partial$-compressing disk of $A$, a contradiction. If $(B_1,k\cap B_1)$ is not rational, then $p^{-1}(\partial B_1)$ induces an incompressible torus in the exterior of $6_8$, contradicting Lemma \ref{lm:sixeight_atoroidality}.

Cases \ref{itm:two_arcs} and \ref{itm:one_circle} cannot occur since $k\cap U$ is an arc.
\cout{
\ref{itm:two_arcs}: It follows from the fact that $k\cap U$ is an arc.

\ref{itm:one_circle}: Note first that $F$ is incompressible by the irreducibility of $6_3$. Since components of $\partial F$ are not parallel in $\partial W$, if 
$F$ is $\partial$-compressible, then the frontier of a regular neighborhood of the union of a $\partial$-compressing disk and $F$ induces an essential disk in the exterior of $6_3$, contradicting its irreducibility. 
Thus, $F$ is essential, yet this contradicts the hyperbolicity of $6_3$.
}
\end{proof}
\cout{
\begin{remark}
After we proved Theorem~\ref{6_8}, the preprint~\cite{Chan:25} appeared on arXiv. Using their result, we can show that $6_8$ is hyperbolic.
\end{remark}
}